\newtheorem{thmIntro}{Theorem}    
\newtheorem{theorem}{Theorem}[section]
\newtheorem{lemma}[theorem]{Lemma}
\newtheorem{proposition}[theorem]{Proposition}
\theoremstyle{definition}
\newtheorem{definition}[theorem]{Definition}
\newtheorem{example}[theorem]{Example}
\newtheorem{corollary}[theorem]{Corollary}
\theoremstyle{remark}
\newtheorem{remark}[theorem]{Remark}
\newcommand\myshade{85}
\colorlet{mylinkcolor}{black}
\colorlet{mycitecolor}{red}
\colorlet{myurlcolor}{cyan}
\numberwithin{equation}{section}
\definecolor{dark-green}{RGB}{14,150,2}
\definecolor{red}{RGB}{250,0,0}
\begin{document}
\title[Maximal almost pre-rigid representation]
{A geometric realization for maximal almost pre-rigid representations over type $\mathbb{D}$ quivers}

\author{Jianmin Chen}
\address{School of Mathematical Sciences, Xiamen University, 361005, Fujian, PR China.}
\email{chenjianmin@xmu.edu.cn}

\author{Yiting Zheng}
\address{School of Mathematical Sciences, Xiamen University, 361005, Fujian, PR China.}
\email{ytzhengxmu@163.com}

\subjclass[2020]{Primary 16G20, 05E10; Secondary 16D90, 16G70}
 
\dedicatory{}%
\commby{}%

\keywords{maximal almost pre-rigid representation, geometric realization, type $\mathbb{D}$ quiver,  Cambrian lattice,   generalized Catalan number}

\begin{abstract}
By using the equivariant theory of group actions, we give a geometric model for the category of  finite dimensional representations over a type   $\mathbb{D}$ quiver $Q_{D}$ with $n$ vertices and directional symmetry. Furthermore, we  introduce the notion of  maximal almost pre-rigid representations  over $Q_{D}$, which form  a  family of objects counted by   the generalized Catalan number. We present    a geometric realization for   maximal almost pre-rigid representations and   prove that  the endomorphism algebras of maximal almost pre-rigid representations are   tilted algebras of type $Q_{\overline{D}}$,  
  where 
$Q_{\overline{D}}$ is a 
quiver obtained by adding $n-2$ new vertices and $n-2$ arrows to the quiver $Q_{D}$. Additionally, 
we define a partial order on
the set of maximal almost pre-rigid representations, which therefore presents  a representation-theoretic interpretation of the    type-$\mathbb{D}$ Cambrian lattice determined by
  $Q_{D}$. Meanwhile, we obtain a  representation-theoretic   interpretation of the    type-$\mathbb{B}$  Cambrian lattices. 
\end{abstract}

\maketitle
\tableofcontents
\section{Introduction}\label{sec.1}

Coxeter groups, a type of group generated by reflection, play an important role in various fields such as algebra, geometry, combinatorics, and theoretical physics.
 The combinatorial properties of these groups, especially their exponents and Coxeter numbers, are closely related to the enumeration of some combinatorial structures. As we know, the exponents and Coxeter number  of a finite irreducible Coxeter group $W$  give rise to the $W$-Catalan numbers \cite{MR2031858,MR1644234,MR1483446}, which coincide with the   Catalan numbers in the case of type-$\mathbb{A}$  Coxeter groups. 
$W$-Catalan numbers arise independently in several areas of mathematics. 
One significant collection of objects counted by the $W$-Catalan number is the set of noncrossing partitions corresponding to $W$ \cite{MR2031858,MR1483446,MR2032983,MR1990581,MR1909925}.
Another class of objects  counted by the $W$-Catalan number is the set of nonnesting partitions associated to $W$. 
These objects  exhibit a remarkable connection with noncrossing partitions, as demonstrated in \cite{MR1483446}. They also appear   in several closely related contexts, including double affine Hecke algebras, two-sided cells, and coinvariant rings \cite{MR2486939}.
   In 2001,  Fomin and   Zelevinsky  introduced the  concept of clusters in the  root system associated with  a Coxeter group $W$, whose count   also coincides with the $W$-Catalan number. 
   
 Cambrian lattices and Coxeter-sortable elements associated with  $W$ are two  classes of objects enumerated by $W$-Catalan numbers, both of which were introduced by Reading. The Cambrian lattices were defined as   lattice quotients of the weak order on $W$ modulo certain congruences, that depend on the  selection of an orientation for the Coxeter diagram \cite{rea06}.
The Hasse diagram for each Cambrian lattice is isomorphic to the 1-skeleton of the generalized associahedron for the corresponding Coxeter group, as detailed in~\cite{rea06,MR2486939,MR2336311,MR2781960}. 
Additionally,  over an algebraically closed field, 
the elements of  Cambrian lattice coming from a Dynkin quiver $Q$ are in bijection with several families of objects:
the clusters of the  cluster algebra whose initial seed is given by $Q$,  
the (isoclasses of) cluster tilting objects in the cluster category  corresponding to $Q$,
the finitely generated torsion classes over the path algebra of $Q$, 
the finitely generated wide subcategories in the category of  finite dimensional representations over $Q$, 
the finitely generated semistable subcategories in the category of finite dimensional representations over $Q$, 
as well as the noncrossing partitions associated with $Q$ \cite{IT}.

In \cite{BGMR}, Barnard-Gunawan-Meehan-Schiffler introduced  a new Catalan object, which is called  a maximal almost rigid representation, to give a representation-theoretic interpretation of the type-$\mathbb{A}$ Cambrian lattices as follows.   Specifically, let $Q$ be a type $\mathbb{A}$ quiver with $n$ vertices.  The authors presented 
a geometric model for the category of  finite dimensional representations over  $Q$ via a   polygon $P(Q)$ with $n+1$ vertices.
Based on this model,
they realized maximal almost rigid representations over $Q$  as triangulations of $P(Q)$.
 Moreover, by defining a partial order on the set of maximal almost rigid representations, they showed that this partial order is a type-$\mathbb{A}$ Cambrian lattice.
Motivated by this   work,  we aim to seek out  a class of representations over type $\mathbb{D}$ quivers  counted by  the generalized Catalan number, and 
give a representation-theoretic interpretation  for the  type-$\mathbb{D}$ Cambrian lattices.

Throughout this paper, we consider the type $\mathbb{D}$ quiver $Q_{D}$ with $n$ vertices and directional symmetry. Let $\Bbbk$ be an algebraically closed field whose characteristic is not 2.
By \cite[Section 2.1]{R}, there  exist  a type $\mathbb{A}$
quiver $Q_{A}$ with $2n-3$ vertices and an action of a  group $G$ with order $2$ on  
$Q_{A}$ such that the corresponding skew algebra $(\Bbbk Q_{A})G$ is Morita equivalent to $\Bbbk Q_{D}$ (see Sect.~\ref{sec.3} for more details). According to the theory of group action and equivariant, the induced category $(\Bbbk Q_{A}\text{-mod})^{G}$ is equivalent to the left module category  of  $Q_{D}$ (c.f. \cite{C2}). 
Inspired by this equivalence of categories and  the works in \cite{BS2021,2006Quivers,HZZ,S2,CQZ}, we derive a geometric model $P(Q_{D})$ for the left module category  of  $Q_{D}$ via adding a puncture $O$ in the center of the centrally symmetric polygon $P(Q_{A})$, where $P(Q_{A})$ is the polygon model for the module category of  $Q_{A}$ constructed in \cite{BGMR,CQZ}. Specifically,
we define a translation quiver $(\Gamma_{D},R_{D})$ of $P(Q_{D})$, where the vertices  are all the tagged line segments of $P(Q_{D})$; the arrows are induced by pivots of tagged line segments;  $R_{D}$ acts on tagged line segments by clockwise rotation. Let $\mathcal{C}_{D}$ be the mesh category with respect to $(\Gamma_{D},R_{D})$. We get the following result.

\begin{thmIntro}[Theorem \ref{Thm:A}] \label{Thm:A1}
 The category of indecomposable representations over  $Q_{D}$ 
 is equivalent to the category of tagged line segments  $\mathcal{C}_{D}$.
\end{thmIntro}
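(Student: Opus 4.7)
The plan is to transport the known geometric model of $\Bbbk Q_{A}\text{-mod}$ to $\Bbbk Q_{D}\text{-mod}$ via the $G$-equivariant equivalence $(\Bbbk Q_{A}\text{-mod})^{G}\simeq \Bbbk Q_{D}\text{-mod}$ and to match the two mesh categories. First I would recall the polygon model $P(Q_{A})$ from \cite{BGMR,CQZ}, in which indecomposable representations of $Q_{A}$ correspond bijectively to line segments in $P(Q_{A})$, irreducible morphisms correspond to pivots, and the mesh relations realize the Auslander--Reiten quiver. Since the $G$-action on $Q_{A}$ is by the involution that swaps the two ``long arms,'' it lifts to the centrally symmetric polygon $P(Q_{A})$ as the rotation of order two fixing the puncture $O$.

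Next I would classify the indecomposable objects of $(\Bbbk Q_{A}\text{-mod})^{G}$ by how $G$ acts on the $G$-orbits of line segments. For a line segment $\gamma$ that is \emph{not} a diameter through $O$, the segments $\gamma$ and $g\!\cdot\!\gamma$ are distinct, so the $G$-equivariant indecomposable object built from the orbit is unique up to isomorphism; geometrically I would identify it with the single (untagged) line segment in $P(Q_{D})$ obtained by taking one representative from the pair. For a diameter $\gamma$ through $O$, one has $\gamma\cong g\!\cdot\!\gamma$; since $\operatorname{char}\Bbbk\neq 2$, the two characters of $G$ yield exactly two non-isomorphic $G$-equivariant structures on the underlying module, and these are precisely the plain and notched versions of the tagged diameter in $P(Q_{D})$. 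This establishes a bijection between indecomposable objects of $\mathcal{C}_{D}$ and indecomposable $Q_{D}$-representations.

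After the object-level bijection, I would promote it to a functor by translating pivots of tagged line segments into irreducible morphisms. Pivots that stay away from $O$ come from $G$-symmetric pairs of pivots in $P(Q_{A})$ and descend directly; pivots that cross $O$ must swap the tag (plain vs.~notched), which reflects how the equivariant structure changes when the line segment degenerates through the puncture. With the pivots in place, the mesh relations $R_{D}$ can be shown to coincide, under the equivariant equivalence, with the images of the mesh relations in $\Bbbk Q_{A}\text{-mod}$ (combined into $G$-orbits, with the two diameter orbits each contributing a separate mesh). By the Gabriel--Riedtmann description of the AR quiver of $\Bbbk Q_{D}\text{-mod}$ as the mesh category of $\mathbb{Z}\mathbb{D}_{n}$ modulo translation, this identifies $\mathcal{C}_{D}$ with the category of indecomposable $Q_{D}$-representations.

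The main obstacle will be the diameter case: on the geometric side a single $G$-invariant line segment must produce two non-isomorphic tagged objects, and on the representation-theoretic side the corresponding $\Bbbk Q_{A}$-module must split into two non-isomorphic $G$-equivariant indecomposables whose pivots behave correctly. Beyond a careful Clifford-style count of equivariant structures, one has to verify that each irreducible morphism starting or ending at a diameter is recorded by exactly one tag-swapping pivot in $(\Gamma_{D},R_{D})$; once this is done, the matching of the mesh relations around the puncture and hence the equivalence of the two categories follows.
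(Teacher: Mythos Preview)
Your strategy is correct and in the same spirit as the paper---both use the $G$-equivariant passage from the type $\mathbb{A}$ model to type $\mathbb{D}$---but the execution differs. The paper does not argue via Clifford theory on $G$-orbits of line segments. Instead it builds the functor $F_D$ explicitly as a composite: it splits $\Omega$ into two overlapping pieces $\Omega_1,\Omega_2$ (removing one tag each), defines combinatorial ``forgetful'' functors $f_1,f_2$ from the corresponding full subcategories $\mathcal{C}_D^1,\mathcal{C}_D^2$ back to $\mathcal{C}_A$, and pairs these with two explicit functors $\psi_1,\psi_2\colon\mathcal{A}'\to{\rm ind}\,Q_D$ (landing alternately on the $\overline{P}_{n-1}$ and $\overline{P}_n$ orbits according to the parity of $k$). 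The equivalence is then checked by hand: $F_D$ is a bijection on objects; a pivot in $P(Q_D)$ is a pivot under $f_1$ or $f_2$, and an irreducible map in ${\rm ind}\,Q_D$ lifts under $\psi_1^{-1}$ or $\psi_2^{-1}$ to one in $\mathcal{A}'$, so Theorem~\ref{thm:FA} transports the bijection on arrows; finally $F_D\circ R_D=\tau\circ F_D$ is verified by a short case analysis.

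Your orbit-classification argument is cleaner conceptually and avoids the somewhat ad hoc pair $(f_1,\psi_1),(f_2,\psi_2)$, but note one inaccuracy: pivots at the puncture do not ``swap the tag.'' In the paper's translation quiver, a non-diameter with $[s,t]=n-1$ pivots to \emph{both} tagged diameters $\gamma_{-t}^{t,1}$ and $\gamma_{-t}^{t,-1}$, and each tagged diameter pivots out to the same untagged segment; it is the translation $R_D$ that flips the tag, $R_D(\gamma_{-v}^{v,\epsilon})=\gamma_{-t}^{t,-\epsilon}$. This is exactly what your Clifford count predicts (two irreducibles over a $G$-fixed segment, interchanged by $\tau$), so your approach goes through once you phrase the diameter behavior correctly. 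The paper's more explicit construction pays off later, since the concrete formulas for $F_D$ are reused to compute dimension vectors (Theorem~\ref{dim of ind}) and crossing numbers.
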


Employing  this equivalence of categories, we can easily determine the dimension vectors of  indecomposable representations over  $Q_{D}$ with the help  of  tagged line segments.
This approach greatly simplifies  the computation of  the Auslander-Reiten quiver of $\Bbbk Q_{D}$. Furthermore,
 using the geometric model $P(Q_{D})$, we define the
{crossing number} of two tagged line segments on  $P(Q_{D})$ and prove the following theorem.

\begin{thmIntro}[Theorem~\ref{E and e}] \label{Thm:B}
The dimension of the first extension group  between indecomposable representations over  $Q_{D}$  equals   the crossing number of  the corresponding tagged line segments on $P(Q_{D})$.  
\end{thmIntro}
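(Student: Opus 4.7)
The plan is to reduce the claim to its type-$\mathbb{A}$ counterpart, already established in \cite{BGMR,CQZ}, via the equivariant equivalence $(\Bbbk Q_A\text{-mod})^G\simeq\Bbbk Q_D\text{-mod}$ recalled in Section 3. By Theorem~A, each indecomposable over $Q_D$ is encoded by a tagged line segment $\alpha$ on the punctured polygon $P(Q_D)$. Lifting along the double cover $P(Q_A)\to P(Q_D)$, where $G=\langle g\rangle$ acts on $P(Q_A)$ by central $180^\circ$ rotation, the pre-image of $\alpha$ is either a free $G$-orbit $\{\gamma,g\gamma\}$ of two ordinary arcs (when $\alpha$ avoids the puncture) or a single $g$-invariant diameter $\gamma$ (when $\alpha$ is incident to the puncture); in the latter case the two tag-states of $\alpha$ distinguish the two non-isomorphic $G$-equivariant structures on the same underlying $\Bbbk Q_A$-module.

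First I would invoke the type-$\mathbb{A}$ crossing formula for the lifts: for indecomposable $\Bbbk Q_A$-modules $X_\gamma,X_\delta$,
\begin{equation*}
\dim\mathrm{Ext}^1_{\Bbbk Q_A}(X_\gamma,X_\delta)\;=\;\mathrm{Cross}_{P(Q_A)}(\gamma,\delta).
\end{equation*}
Because $\mathrm{char}\,\Bbbk\neq 2$, the skew-group construction yields the averaging isomorphism
\begin{equation*}
\mathrm{Ext}^1_{\Bbbk Q_D}(M_\alpha,M_\beta)\;\cong\;\mathrm{Ext}^1_{\Bbbk Q_A}(\widetilde{M}_\alpha,\widetilde{M}_\beta)^G,
\end{equation*}
where the $G$-action on the right is induced from the equivariant structures of the lifts $\widetilde{M}_\alpha,\widetilde{M}_\beta$. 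The theorem then reduces to showing that this $G$-invariant subspace has dimension equal to the crossing number defined on $P(Q_D)$.

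Second, I would run a case analysis based on puncture-incidence. If neither $\alpha$ nor $\beta$ is puncture-incident, then $G$ acts freely on the set of interior crossings of the two lift pairs, so the invariant subspace has dimension equal to the number of $G$-orbits of such crossings, which is precisely the interior crossing count of $\alpha$ and $\beta$ on $P(Q_D)$. If exactly one of them is puncture-incident, the corresponding diameter meets the opposite lift pair in off-center points paired by $g$, together with at most one transverse crossing at the center, and a direct verification shows that the invariant count matches the paper's crossing count.

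The hard part is the case when both $\alpha$ and $\beta$ are puncture-incident, so that their lifts are diameters meeting only at the $g$-fixed center of $P(Q_A)$. Here the type-$\mathbb{A}$ Ext space is at most two-dimensional and, viewed as a $\Bbbk G$-module, decomposes into the trivial and the sign representation, with multiplicities governed by the composite $\phi_\beta\circ\phi_\alpha^{-1}$, where $\phi_\bullet$ denotes the equivariant structure of the relevant lift. Since the relative tag between $\alpha$ and $\beta$ records precisely this sign, taking $G$-invariants selects the trivial summands. I would conclude by verifying, through an explicit computation on projective resolutions (or equivalently on the $G$-equivariant Auslander-Reiten sequences of the lifts), that the paper's combinatorial definition of the crossing number of two tagged puncture-incident arcs coincides with this invariant dimension, completing the proof.
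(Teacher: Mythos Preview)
Your overall strategy---reduce to type $\mathbb{A}$ via the equivariant isomorphism $\mathrm{Ext}^1_{\mathcal{D}}\cong(\mathrm{Ext}^1_{\mathcal{A}})^G$ and invoke the type-$\mathbb{A}$ crossing formula---is exactly what the paper does (their Lemma~\ref{lem:C1} plus \cite[Theorem~6.8]{BGMR}). The free-orbit case and the mixed case are handled in the paper essentially as you outline, though the paper makes your ``direct verification'' in the mixed case explicit by constructing concrete $G$-invariant extensions (their Lemmas~\ref{lem:n-1} and~\ref{lem:5.7}); you should not underestimate that step, since showing that the invariant subspace is at most one-dimensional there requires exhibiting an explicit basis of $\mathrm{Ext}^1_{\mathcal{A}}(\tau^{-k}P_{n-1},N\oplus F_g(N))$ and checking the $G$-action on it.

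There is, however, a genuine gap in your treatment of the case where both arcs are puncture-incident. The lifts are then single $g$-fixed indecomposables $\tau^{-k}P_{n-1}$ and $\tau^{-l}P_{n-1}$, and by Proposition~\ref{2.2}(3) the type-$\mathbb{A}$ Ext between them is at most \emph{one}-dimensional, not two. On that one-dimensional space the $G$-action is $\xi\mapsto (\phi_\beta\phi_\alpha)^{-1}\cdot F_g(\xi)$, so the invariant dimension depends not only on the product of the equivariant signs $\phi_\alpha,\phi_\beta\in\{\pm1\}$ but also on the intrinsic sign $\epsilon\in\{\pm1\}$ with $F_g(\xi)=\epsilon\,\xi$. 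Your formula ``governed by $\phi_\beta\circ\phi_\alpha^{-1}$'' omits $\epsilon$, and determining $\epsilon$ is precisely the nontrivial content here: it depends on the parity of $k-l$, which is why the paper's crossing rule for two tagged diameters involves both the tag comparison and the positive-intersection count. The paper in fact sidesteps the equivariant computation entirely for this case and instead computes $\dim\mathrm{Ext}^1_{\mathcal{D}}(\tau^{-k}\overline{P}_i,\tau^{-l}\overline{P}_j)$ directly in $\mathcal{D}$ by Auslander--Reiten theory (their Lemma~\ref{f:1}), which is cleaner. If you want to push your equivariant approach through, you must compute $\epsilon$ explicitly---e.g.\ by examining how $F_g$ acts on the middle term of the generator of $\mathrm{Ext}^1_{\mathcal{A}}(\tau^{-k}P_{n-1},\tau^{-l}P_{n-1})$ constructed in Lemma~\ref{lem:n-1}---and then track how the tag-to-equivariant-sign dictionary itself alternates with $k$ (cf.\ Definition~\ref{P1,P2}).
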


 Besides, the generators of  the first extension group  between  two indecomposable representations over  $Q_{D}$ can be  illustrated via the corresponding tagged line segments on $P(Q_{D})$(see Proposition~\ref{4.28}).

We then introduce the notion of maximal almost pre-rigid representations over  $Q_{D}$. 
In contrast to the  definition of  maximal almost rigid representations, the  prefixion ``pre-" indicates that
the middle term of extension between two indecomposable summands of almost pre-rigid representations   is allowed to  take the form
 $\tau^{-k}\overline{P}_{n-1}\oplus\tau^{-k}\overline{P}_{n}$, for some $k\in \{0,1,\dots,n-2\}$(refer to Definition~\ref{def:arr of D}), while for almost rigid representations, the middle term must be indecomposable.  
 In light of  Proposition~\ref{4.28},
we give a geometric description of them in terms of tagged triangulations of $P(Q_{D})$(see Definition~\ref{tri of P(Q_{D})}).

\begin{thmIntro}[Theorem \ref{Thm 1} ]\label{Thm:C}
There exists  a one-to-one correspondence between the maximal almost pre-rigid representations over  $Q_{D}$ and the tagged triangulations of $P(Q_{D})$. 
\end{thmIntro}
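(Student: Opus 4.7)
The plan is to build the correspondence summand-by-arc and then verify that the maximality conditions on each side translate into one another under the geometric model from Theorem~\ref{Thm:A1} together with the crossing/extension dictionary from Theorem~\ref{Thm:B}.

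First, I would set up the direction from representations to triangulations. Given a maximal almost pre-rigid representation $M=\bigoplus_{i} M_{i}$ over $Q_{D}$, associate to each indecomposable summand $M_{i}$ its tagged line segment $\gamma_{M_{i}}$ on $P(Q_{D})$ via the equivalence of Theorem~\ref{Thm:A1}. The goal is to show that the collection $\{\gamma_{M_{i}}\}$ forms a tagged triangulation of $P(Q_{D})$ in the sense of Definition~\ref{tri of P(Q_{D})}. This reduces to two claims: (a) any two arcs in the collection either are non-crossing, or their crossing is of the type permitted in the definition of a tagged triangulation; and (b) the collection is maximal among families of tagged arcs satisfying (a).

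For (a), by Theorem~\ref{Thm:B} a crossing between $\gamma_{M_{i}}$ and $\gamma_{M_{j}}$ corresponds to a nonzero element of $\mathrm{Ext}^{1}(M_{i},M_{j})$ or $\mathrm{Ext}^{1}(M_{j},M_{i})$. Using Proposition~\ref{4.28}, which describes extension generators geometrically, together with the defining condition of almost pre-rigid—namely that any nonvanishing extension has middle term of the form $\tau^{-k}\overline{P}_{n-1}\oplus\tau^{-k}\overline{P}_{n}$ for some $k\in\{0,1,\ldots,n-2\}$—I would translate the algebraic restriction into a precise geometric condition: the only permitted crossings involve tagged arcs incident to the puncture $O$ carrying opposite tags, together with their images under the rotation $R_{D}$. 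These are exactly the admissible crossings in a tagged triangulation of a once-punctured polygon, following the conventions set up in the geometric model. Claim (b) then follows by counting: both maximal almost pre-rigid representations and tagged triangulations are saturated families, and a saturation argument combined with a dimension count shows that any strict enlargement forces a forbidden crossing, respectively a forbidden extension. For the converse direction, given a tagged triangulation $T$ of $P(Q_{D})$, let $M_{T}$ be the direct sum of the indecomposable representations corresponding to the tagged arcs of $T$; reading Theorem~\ref{Thm:B} and Proposition~\ref{4.28} in reverse, the non-crossing/admissible-crossing constraints of $T$ translate into the almost pre-rigid property of $M_{T}$, and maximality of $T$ yields maximality of $M_{T}$.

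The main obstacle will be step (a), namely carefully matching the algebraic exception—middle terms of shape $\tau^{-k}\overline{P}_{n-1}\oplus\tau^{-k}\overline{P}_{n}$—with its geometric counterpart involving the pair of tagged arcs at the puncture $O$ with opposite tags and their translates under $R_{D}$. This demands a careful bookkeeping of tags (plain versus notched) at the puncture, since these are precisely what encode the symmetric pair of projectives $\overline{P}_{n-1}$ and $\overline{P}_{n}$ inside the representation category. Once the tagging conventions are fixed and the list of admissible crossings is pinned down, the two constructions invert one another, and the common cardinality—the generalized Catalan number of type $\mathbb{D}$—provides a consistency check.
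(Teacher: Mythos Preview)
Your overall strategy—use Theorem~\ref{Thm:A1} for the object-level bijection and Theorem~\ref{Thm:B} together with Proposition~\ref{4.28} to match the compatibility conditions—is exactly what the paper does, and the paper's proof is only a few lines once those ingredients are in place. However, your execution rests on two misreadings of the definitions that would derail the argument as written.

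First, you state the almost pre-rigid condition as ``any nonvanishing extension has middle term of the form $\tau^{-k}\overline{P}_{n-1}\oplus\tau^{-k}\overline{P}_{n}$.'' That is not Definition~\ref{def:arr of D}. Condition~(2) there allows the middle term to be \emph{either} indecomposable \emph{or} of that special shape, and condition~(1) is separate: it requires $\mathrm{Ext}^1$ to vanish outright whenever both summands lie in the $\tau$-orbits of $\overline{P}_{n-1}$ and $\overline{P}_n$. Second, you speak of ``admissible crossings'' in a tagged triangulation and of ``tagged arcs incident to the puncture $O$ carrying opposite tags.'' In the paper's model the tagged segments $\gamma_{-t}^{t,\epsilon}$ are diameters \emph{through} the center $O$, not arcs ending at a puncture, and Definition~\ref{tri of P(Q_{D})} requires that \emph{no} two tagged segments cross. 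The crossing number in Definition~\ref{crossing number} is already engineered (via the factor $|\tfrac{l_1-l_2}{2}|$ in the diameter case and the two-term sum in the generic case) so that ``non-crossing'' is literally equivalent to the almost pre-rigid condition; there is no layer of exceptional crossings to keep track of.

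Once you use the actual definitions, your step (a) collapses: by Theorem~\ref{Thm:B} and the case analysis in Proposition~\ref{4.28}, a basic representation $\overline{T}$ is almost pre-rigid if and only if the tagged segments in $F_D^{-1}(\overline{T})$ are pairwise non-crossing. Maximality then transfers tautologically in both directions, so no separate counting or saturation argument is needed for the bijection itself—the generalized Catalan count is a corollary (Corollary~\ref{form}), not an input.
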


  Consequently, every maximal almost pre-rigid representation   over  $Q_{D}$
has exactly $2n-2$ indecomposable direct summands. 
Based on this, we deduce an application of maximal almost pre-rigid representations in tilting theory. Let $Q_{\overline{D}}$  be the quiver obtained by adding $n-2$ new vertices and $n-2$ arrows to the quiver $Q_{D}$. We  define a functor $G_{D}\colon\Bbbk Q_{ {D}}\text{-mod}\to \Bbbk Q_{\overline{D}}\text{-mod}$ (for a  detailed construction, see Sect. \ref{sec.6}). Our fourth main result is presented as follows.

\begin{thmIntro}[Theorem \ref{Thm:D}]\label{D}
For each  maximal almost pre-rigid representation $\overline{T}$ over  $Q_{D}$,
  its    endomorphism algebra $C$  is a tilted algebra of type $Q_{\overline{D}}$. Moreover, 
there is an isomorphism of algebras 
\[ {\rm End}_{\overline{\mathcal{C}}}  
 \widetilde{G}(\overline{T})\cong C \ltimes{\rm Ext}_{C}^{2}(DC,C),  \]
 where  $C \ltimes{\rm Ext}_{C}^{2}(DC,C)$ is the trivial extension of $C$, $\overline{\mathcal{C}}$  is the cluster category of $Q_{\overline{D}}$ and  $\widetilde{G}_{D}(\overline{T})$ is the image of $G_{D}(\overline{T})$ in $\overline{\mathcal{C}}$ under the natural embedding that maps a representation to its orbit.
\end{thmIntro}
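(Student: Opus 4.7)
The plan is to establish the theorem in three stages: first, show that $G_{D}(\overline{T})$ is a tilting module over $\Bbbk Q_{\overline{D}}$; second, identify ${\rm End}_{\Bbbk Q_{\overline{D}}}(G_{D}(\overline{T}))$ with $C$; third, invoke a general principle from cluster tilting theory to obtain the trivial extension description.

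For the first stage, I would count indecomposable summands. By Theorem~\ref{Thm:C}, $\overline{T}$ has exactly $2n-2$ non-isomorphic indecomposable summands, and $Q_{\overline{D}}$ has $n+(n-2)=2n-2$ vertices, so the summand count of $G_{D}(\overline{T})$ matches the rank of the Grothendieck group of $\Bbbk Q_{\overline{D}}$. The essential check is rigidity: ${\rm Ext}^{1}_{\Bbbk Q_{\overline{D}}}(G_{D}(\overline{T}), G_{D}(\overline{T})) = 0$. My strategy is to use Theorem~\ref{Thm:B}, which identifies ${\rm Ext}^{1}$ between indecomposables over $Q_{D}$ with crossing numbers of tagged line segments on $P(Q_{D})$. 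The almost pre-rigidity assumption forces the only crossings among summands of $\overline{T}$ to correspond to the exceptional extensions with middle term $\tau^{-k}\overline{P}_{n-1}\oplus \tau^{-k}\overline{P}_{n}$. The functor $G_{D}$ is constructed in Section~\ref{sec.6} precisely so that the enlarged quiver $Q_{\overline{D}}$ resolves these exceptional crossings; rigidity in $\Bbbk Q_{\overline{D}}\text{-mod}$ then reduces to a case-by-case verification on the finitely many such pairs.

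For the second stage, I would show that $G_{D}$ is fully faithful on the additive closure of $\overline{T}$, so that ${\rm End}_{\Bbbk Q_{\overline{D}}}(G_{D}(\overline{T})) \cong C$. Combined with the first stage, this identifies $C$ as the endomorphism algebra of a tilting module over the hereditary algebra $\Bbbk Q_{\overline{D}}$, which by definition makes $C$ a tilted algebra of type $Q_{\overline{D}}$. For the third stage, I would invoke the Assem--Br\"ustle--Schiffler theorem: for any tilting module $T$ over a hereditary algebra $H$ with $C = {\rm End}_{H}(T)$, the endomorphism algebra of its image in the cluster category $\mathcal{C}_{H}$ satisfies ${\rm End}_{\mathcal{C}_{H}}(T) \cong C \ltimes {\rm Ext}^{2}_{C}(DC, C)$. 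Applying this to $H = \Bbbk Q_{\overline{D}}$ and $T = G_{D}(\overline{T})$ yields the desired isomorphism.

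The main obstacle will be the rigidity verification in the first stage. The functor $G_{D}$ is designed to transplant the punctured-polygon data of $P(Q_{D})$ into combinatorics suited to the unpunctured quiver $Q_{\overline{D}}$, and making precise the mechanism by which the extra $n-2$ vertices and arrows untangle the exceptional crossings intrinsic to almost pre-rigid (rather than almost rigid) representations is the technical heart of the argument. A secondary subtlety is to confirm that $G_{D}$ sends pairwise non-isomorphic indecomposable summands of $\overline{T}$ to pairwise non-isomorphic indecomposables, so that the $2n-2$ summand count is preserved after applying $G_{D}$.
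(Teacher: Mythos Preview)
Your three-stage plan is essentially the paper's plan: prove $G_D(\overline{T})$ is a tilting $\Bbbk Q_{\overline{D}}$-module, use full faithfulness of $G_D$ (the paper's Lemma~\ref{fully faithful}) to identify $C$ with ${\rm End}_{\Bbbk Q_{\overline{D}}}(G_D(\overline{T}))$, then invoke Assem--Br\"ustle--Schiffler. The summand count and the second and third stages match the paper exactly.

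The rigidity check in stage one is where you diverge. You propose a direct verification via crossing numbers and a case analysis on $G_D$. The paper instead reduces to type $\mathbb{A}$: it writes $\overline{T}=\overline{N}\oplus\overline{M}$ with $\overline{N}$ the sum of summands not in the $\tau$-orbits of $\overline{P}_{n-1},\overline{P}_n$, lifts $\overline{N}=\psi(N)$ to $Q_A$, observes that $N\oplus\tau^{-k}P_{n-1}$ is almost rigid over $Q_A$ for each relevant $k$, applies the type-$\mathbb{A}$ rigidity result \cite[Theorem~7.3(1)]{BGMR} to $G_A(N\oplus\tau^{-k}P_{n-1})$, and pushes down via the commutative square $\overline{\psi}\circ G_A=G_D\circ\psi$ (Remark~\ref{rem:commutative}(2)) and exactness of $\overline{\psi}$. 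The remaining $\overline{M}$--$\overline{M}$ extensions are handled separately by Lemma~\ref{f:1} and Remark~\ref{rem:commutative}(1). This equivariant reduction avoids redoing the type-$\mathbb{A}$ argument from scratch.

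There is also an inaccuracy in your description: almost pre-rigidity does \emph{not} force the only nonzero ${\rm Ext}^1$ among summands of $\overline{T}$ to be the exceptional ones with middle term $\tau^{-k}\overline{P}_{n-1}\oplus\tau^{-k}\overline{P}_n$. Definition~\ref{def:arr of D}(2) also permits extensions with indecomposable middle term---these are the ordinary ``almost rigid'' crossings inherited from type $\mathbb{A}$, and they too must be shown to vanish after applying $G_D$. Your direct approach would need to handle both families, which amounts to reproving the type-$\mathbb{A}$ mechanism inside type $\mathbb{D}$; the paper's reduction is precisely what buys this for free.
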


 Furthermore, we explore 
 Cambrian combinatorics on quiver representations over type $\mathbb{D}$ quivers.  
We define a partial order on the set of maximal almost pre-rigid representations over $Q_D$, which  therefore give a representation-theoretic interpretation for the  Cambrian lattice associated with the quiver $Q_D'$, where $Q_D'$ is   obtained from $Q_{D}$ by removing the vertex labeled with $1$ and the arrow $\beta_{1}$. 

\begin{thmIntro}[Theorem \ref{Thm:F}]\label{E}
The type-$\mathbb{D}$  Cambrian lattice coming from $Q_D'$ is  isomorphic to the set of  maximal almost pre-rigid representations over $Q_D$  with the covering relation given in Definition~\ref{def poset}. 
\end{thmIntro}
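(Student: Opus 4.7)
The plan is to use the geometric model established in Theorem~\ref{Thm:C} as the central bridge, turning a statement about representations into one about tagged triangulations of $P(Q_D)$. First I would transport the partial order of Definition~\ref{def poset} along the bijection of Theorem~\ref{Thm:C} to obtain a partial order on the set of tagged triangulations of $P(Q_D)$. This reduces the problem to identifying this combinatorial poset with the type-$\mathbb{D}$ Cambrian lattice coming from the quiver $Q_D'$.

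Next I would invoke the known model of the type-$\mathbb{D}$ Cambrian lattice as the poset of (centrally symmetric) tagged triangulations of a once-punctured polygon, with covering relations given by single flips whose ``direction'' is dictated by the orientation of $Q_D'$. Concretely, I would match the orientation data of $Q_D'$ with the labeling of the boundary arcs of $P(Q_D)$ used in Sect.~\ref{sec.3}, so that an ``up-flip'' in the Cambrian lattice corresponds exactly to the local move on tagged triangulations one expects from the removal of the vertex $1$ and the arrow $\beta_1$ in $Q_D'$. Roughly, this removal tells us which side of each flippable arc is to be regarded as the ``larger'' one, which is what the choice of Cambrian congruence encodes.

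Then I would check that the covering relations are compatible. Given two maximal almost pre-rigid representations $\overline{T}$ and $\overline{T}'$ that cover one another in the sense of Definition~\ref{def poset}, the summands being exchanged differ by a single indecomposable, and I would argue that the corresponding tagged triangulations differ by exactly one flip in the correct direction. The essential tool here is Theorem~\ref{Thm:B} together with Proposition~\ref{4.28}: the crossing number controls both the existence of an exchange and the shape of its middle term, so the geometric flip on $P(Q_D)$ reflects precisely the exchange of summands on the representation side.

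The main obstacle will be the puncture. Because $P(Q_D)$ carries a central puncture $O$, tagged arcs incident to $O$ come in plain/notched pairs, and the associated flips behave differently from interior flips: they may alter the tagging rather than the underlying curve, and they are the geometric shadow of the exceptional extensions with middle term $\tau^{-k}\overline{P}_{n-1}\oplus\tau^{-k}\overline{P}_{n}$ that appear in the definition of almost \emph{pre}-rigidity. Handling these puncture-incident flips requires a careful case analysis to verify that the covering relation of Definition~\ref{def poset} really agrees with the Cambrian covering relation in these exceptional cases, and that the choice to consider $Q_D'$ (rather than $Q_D$) correctly captures the orientation of flips at the puncture. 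Once this is settled, the desired isomorphism of posets follows by comparing the two descriptions of covering relations on the common combinatorial model of tagged triangulations.
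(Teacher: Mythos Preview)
Your approach matches the paper's: transport the order along $F_D$ to tagged triangulations (this is Proposition~\ref{perserves the covering}, proved via Proposition~\ref{4.28} just as you propose) and then identify that poset with the Cambrian lattice of $Q_D'$. The only refinement is that the paper does not invoke a direct triangulation model of the Cambrian lattice; instead it labels the interior tagged line segments by almost positive roots, matches tagged triangulations with $c$-clusters (Remark~\ref{rek:tri and cluster}), and then cites Reading--Speyer's theorem that the $c$-Cambrian lattice is isomorphic to the $c$-cluster lattice.
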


As a corollary, we   provide a representation-theoretic 
description of the type-$\mathbb{B}$ Cambrian lattice associated with $Q_{D}$.   Precisely,  we define a  covering relation on  the set consisting of    maximal almost pre-rigid representations over $Q_D$ that  contain    $\tau^{-k }\overline{P}_{n-1}\oplus \tau^{-k }\overline{P}_{n}$  as   direct summands for  some   $k\in \{0,1,\dots, n-2\}$,  and then prove that the resulting
  poset  
 is   isomorphic to a 
 type-$\mathbb{B}$ Cambrian lattice.

The paper is organized as follows. After a brief introduction in Sect.~\ref{sec.1}, we recall some needed concepts in Sect.~\ref{sec.2}, and then we investigate three functors: $F_{g}$, $\Phi$, and $\psi$ in Sect.~\ref{sec.3}. Sect.~\ref{sec.4} is devoted to the construction of the  category $\mathcal{C}_{D}$ of tagged line segments on $P(Q_{D})$ and the proof of Theorem~\ref{Thm:A1}.  In Sect.~\ref{sec.5}, we introduce the notion  of  crossing number of tagged line segments on  $P(Q_{D})$ and prove Theorem~\ref{Thm:B}.
 In Sect.~\ref{sec.6}, we define the  maximal almost pre-rigid representations over  $Q_{D}$ and   the tagged  triangulations of $P(Q_{D})$,  and then show the one-to-one relation between them; as stated in Theorem~\ref{Thm:C}.
Furthermore, we construct an additive functor $G_{D}$ and  prove Theorem \ref{D}. 
In Sect.~\ref{sec.7}, a representation-theoretic interpretation of the type-$\mathbb{B}$ and type-$\mathbb{D}$  Cambrian lattices  is given. 
\section{Preliminaries}\label{sec.2}
\subsection{Representations over quivers}
In this subsection, we recall some   notions and a property about representations over a quiver from~\cite{BGMR,ASS}.

 Given a finite connected quiver $Q=(Q_{0}, Q_{1})$, where
$Q_{0}$ is the set of vertices and $Q_{1}$ is the set of arrows.  For two arrows
$\alpha,\beta$ of $Q$   such that the target point of $\alpha$ coincides with the start point of $\beta$, the composition of  $\alpha$  and  $\beta$ is denoted by $\alpha \beta$. 

Let  $ {\rm rep}_{\Bbbk}Q$  denote   the category of finite dimensional representations over $Q$,  and let ${\rm ind} \ Q$ be a full subcategory of $ {\rm rep}_{\Bbbk}Q$ whose objects are one representative of the isoclass of each indecomposable representation. It is well known that the category $\Bbbk Q$-mod of finitely generated left modules over the path algebra $\Bbbk Q$ is equivalent to
the category $ {\rm rep}_{\Bbbk}Q$. Therefore, we always identify these two categories. The Auslander-Reiten quiver $\Gamma_{{\rm rep}_{\Bbbk} Q}$
 of $\Bbbk Q$ has the isoclasses of indecomposable representations as vertices and irreducible morphisms as arrows.
\subsection{Translation quivers and mesh categories}
We review the concepts of  translation quivers and mesh categories, for which there are many references such as  \cite{BGMR,ASS,H88,Rei80}.

A {\em translation quiver} $(\Gamma,\tau)$ is a locally finite quiver $\Gamma=(\Gamma_{0}, \Gamma_{1})$ without loops
 together with an injection $\tau\colon  \Gamma_{0}^{'}\to \Gamma_{0}$ from a subset $ \Gamma_{0}^{'}$ of $\Gamma_{0}$ to $\Gamma_{0}$ satisfying  for all vertices $x\in \Gamma_{0}^{'}$ and $y\in  \Gamma_{0}$,
the number of arrows from $y $  to $x$ is equal to the number of arrows
 from $\tau x$ to $y$. The injection $\tau$ is called the {\em translation}.

Given a  translation quiver $( \Gamma,\tau)$, a \emph{polarization of} $ \Gamma$ is
 an injection $\sigma: \Gamma_{1}^{'}\to \Gamma_{1}$, where $\Gamma_{1}^{'}$ is the set of all arrows $\alpha\colon y\to x$
 with $x \in \Gamma_{0}^{'}$, such that
$\sigma(\alpha)\colon\tau x\to y$  for each arrow $\alpha\colon y\to x\in  \Gamma_{1}^{'}$. Clearly, if $ \Gamma$ contains no multiple arrows, then there is a unique polarization of $ \Gamma$.

Assume that the quiver $ \Gamma$ does not contain any multiple arrows. The {\em path category} of  translation quiver $( \Gamma,\tau)$ is the category whose  objects are the vertices $ \Gamma_0$ of $ \Gamma$, and given $x,y\in \Gamma_{0}$, the $\Bbbk$-vector space of morphisms from $x$ to $y$ is given by the $\Bbbk$-vector space with basis consisting  of all paths from $x $ to $y$. The composition of morphisms is induced from the usual composition of  paths.
The {\em mesh ideal} in the path category of $ \Gamma$ is the ideal generated by the {\em mesh relations}
\begin{equation}\nonumber
m_{x} =\sum_{\alpha:y\to x} \sigma(\alpha) \alpha,\ \forall x \in  \Gamma_{0}^{'}.
\end{equation}
 The {\em mesh category} $\mathcal{M} ( \Gamma,\tau)$ of $( \Gamma,\tau)$ is the quotient of the path category of $( \Gamma,\tau)$ by the mesh ideal. 

\begin{remark}
 Let $Q$ be a quiver of Dynkin type and $\Gamma_{{\rm rep}_{\Bbbk} Q}$ be the Auslander-Reiten quiver of path algebra $\Bbbk Q$. Denote by $\Gamma_{0}^{'}$  the set of all points in $\Gamma_{{\rm rep}_{\Bbbk} Q}$ that correspond to a non-projective indecomposable module, then 
the quiver $\Gamma_{{\rm rep}_{\Bbbk} Q}$ together with Auslander-Reiten translation $\tau:\Gamma_{0}^{'}\to \Gamma_{0}$ is a translation quiver. In this case, the mesh category $\mathcal{M}( \Gamma_{{\rm rep}_{\Bbbk} Q},\tau)$ is equivalent to the category  ${\rm ind}\ Q$.
\end{remark}
\subsection{Sectional paths and maximal slanted rectangles}
We recall the definitions of sectional paths and maximal slanted rectangles as introduced in \cite{S}.
Let $Q$ be a type $\mathbb{A}$ quiver.
A path $M_{0} \to M_{1} \to \dots \to M_{s} $ in the Auslander-Reiten quiver  of  $\Bbbk Q$ is called a \emph{sectional path} if $\tau M_{i+1}\ne M_{i-1}$ for all $i=1,2,\dots,s-1$. Denote by 
$\Sigma_{\rightarrow}(M)$ the set of all indecomposable representations that can be reached from $M$ by a sectional path, see Figure \ref{fig:sectional}.
 And $\Sigma_{\leftarrow}(M)$ be the set of all indecomposable representations from which one can reach $M$ by a sectional path. 

 Denote by $\mathscr{R}_{\rightarrow}(M)$  the set of all indecomposable representations whose position in the Auslander-Reiten quiver is in the slanted rectangle region, where the left boundary is given by $\Sigma_{\rightarrow}(M)$, see Figure~\ref{fig:sectional}. Then $\mathscr{R}_{\rightarrow}(M)$ is called the \emph{maximal slanted rectangle} in the Auslander-Reiten quiver whose leftmost point is $M$. Dual, $\mathscr{R}_{\leftarrow}(M)$ is called the \emph{maximal slanted rectangle} in the Auslander-Reiten quiver whose rightmost point is $M$.
\begin{figure}[h]
\centering
\begin{tikzpicture}[scale=0.7]
\coordinate (A) at (0,0);
\fill[red] (A) circle[radius=1.5pt];
\node (1) at (-.4,0) {\small{$M$}};
\draw (-1,2) -- (3,2);
\draw (-1,-1) -- (3,-1);
\draw[red,thick] (0,0) -- (2,2);
\draw [red,thick](0,0) -- (1,-1);
\node (2) at (1.2,0) {\small{$\Sigma_{\rightarrow}(M)$}};
\end{tikzpicture}
\hspace{1cm}
\begin{tikzpicture}[scale=0.7]
\coordinate (A) at (0,0);
\fill (A) circle[radius=1.5pt];
\node (1) at (-.4,0) {\small{$M$}};
\draw (-1,2) -- (3,2);
\draw (-1,-1) -- (3,-1);
\filldraw [fill=gray!20,draw,thick]
(0,0) -- (2,2)--(3,1)--(1,-1)--(0,0);
\node (2) at (1.5,0.5) {\small{$\mathscr{R}_{\rightarrow}(M)$} };
\end{tikzpicture}
\caption{$\Sigma_{\rightarrow}(M)$(left) and $\mathscr{R}_{\rightarrow}(M)$(right) of $M$}
\label{fig:sectional}
\end{figure}

In \cite[Chapter~3]{S}, the following  properties of maximal slanted rectangles are  provided. For ease of reference, we summarize these properties into a proposition. 

\begin{proposition}\label{2.2}
Let  $M, N$ be two indecomposable representations over $Q$.  
\begin{itemize}
\item[(1)] For each $i\in Q_{0}$, the projective representation $P_{i}$ and injective representation $I_{i}$ associated with $i$ satisfy that $\mathscr{R}_{\rightarrow}(P_{i})=\mathscr{R}_{\leftarrow}(I_{i} )$.
\item[(2)] The dimension of ${\rm Hom}(M,N)$ is either $0$ or $1$, and it  equals  $1$ if and only if $N$ lies in $\mathscr{R}_{\rightarrow}(M)$.
\item[(3)] The dimension of ${\rm Ext}^{1}(M,N)$  is either $0$ or $1$, and it   equals $1$ if and only if $\tau M$  lies in $\mathscr{R}_{\rightarrow}(N)$. 
If the extension group ${\rm Ext}^{1}(M,N)\ne 0$, then a non-zero element of ${\rm Ext}^{1}(M,N)$ can be represented by a non-split short exact sequence of the form $0 \to N \to E \to M \to 0. $ In this case, $\Sigma_{\rightarrow}(N)$ and $\Sigma_{\leftarrow}(M)$ have either $1$ or $2$ points in common, and these points correspond  to the indecomposable summands of $E$.
\end{itemize}
\end{proposition}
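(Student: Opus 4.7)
The plan is to reduce all three statements to the standard hammock description of the Auslander--Reiten quiver of a type $\mathbb{A}$ path algebra, combined with the Auslander--Reiten formula. Recall that indecomposable representations of a type $\mathbb{A}$ quiver $Q$ are parametrized by connected intervals on the underlying graph, so that Hom and Ext spaces between two indecomposables admit a transparent combinatorial description.

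For part~(1) I would observe that $\operatorname{Hom}(P_i,-)$ is naturally the evaluation functor $(-)_i$ at the vertex $i$, while $\operatorname{Hom}(-,I_i)\cong D(-)_i$. Hence an indecomposable $N$ admits a nonzero map from $P_i$ precisely when it admits a nonzero map to $I_i$, namely when the interval of $N$ contains $i$. Combined with part~(2), this common set is exactly $\mathscr{R}_{\rightarrow}(P_i)=\mathscr{R}_{\leftarrow}(I_i)$. For part~(2) I would argue by induction along sectional paths from $M$: iterating minimal left almost split maps produces a nonzero composite $M\to N$ whenever $N\in\mathscr{R}_{\rightarrow}(M)$, and the mesh relations, together with the fact that at most one sectional composite survives modulo the mesh ideal, force $\dim\operatorname{Hom}(M,N)\le 1$; for $N\notin\mathscr{R}_{\rightarrow}(M)$, the interval description shows directly that the Hom space vanishes.

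For part~(3) I would apply the Auslander--Reiten duality $\operatorname{Ext}^{1}(M,N)\cong D\underline{\operatorname{Hom}}(N,\tau M)$, valid whenever $M$ is non-projective. In the Dynkin setting, a short verification shows that a nonzero map between indecomposables $N\to\tau M$ does not factor through a projective (the unique generator produced in part~(2) is not in the projective-stable ideal), so $\underline{\operatorname{Hom}}(N,\tau M)=\operatorname{Hom}(N,\tau M)$. Applying part~(2) to the pair $(N,\tau M)$ then gives both the dimension bound and the criterion $\tau M\in\mathscr{R}_{\rightarrow}(N)$. To produce the short exact sequence in the nonzero case, take the Auslander--Reiten sequence $0\to\tau M\to E'\to M\to 0$ and push it out along the unique nonzero morphism $\tau M\to N$; the indecomposable summands of $E'$ lie in $\Sigma_{\rightarrow}(\tau M)\cap\Sigma_{\leftarrow}(M)$, and a mesh-categorical computation should identify the summands of the resulting middle term $E$ with $\Sigma_{\rightarrow}(N)\cap\Sigma_{\leftarrow}(M)$.

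The main obstacle I anticipate is this last identification: matching the indecomposable summands of $E$ bijectively with $\Sigma_{\rightarrow}(N)\cap\Sigma_{\leftarrow}(M)$, which must have cardinality $1$ or $2$. One has to verify that the pushout introduces no spurious summands and that the sectional-path decomposition of $E'$ transfers correctly through the pushout square. A cleaner alternative is to invoke the geometric polygon model for type $\mathbb{A}$ as developed in~\cite{BGMR,CQZ}: there, an extension between two interval modules corresponds to a crossing of two diagonals, and the middle term is obtained by resolving the crossing, producing the one or two diagonals that are immediately visible as the common sectional-path endpoints in $\Sigma_{\rightarrow}(N)\cap\Sigma_{\leftarrow}(M)$.
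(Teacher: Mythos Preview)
The paper does not give its own proof of this proposition: it is stated as a summary of results from \cite[Chapter~3]{S} (Schiffler's textbook), with no argument provided. So there is no ``paper's proof'' to compare against; your sketch is essentially a reconstruction of the standard textbook arguments, which is appropriate.

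One genuine slip: in part~(3) you propose to push out the Auslander--Reiten sequence $0\to\tau M\to E'\to M\to 0$ along ``the unique nonzero morphism $\tau M\to N$''. But the Auslander--Reiten formula you just invoked gives $\operatorname{Ext}^1(M,N)\cong D\operatorname{Hom}(N,\tau M)$, so the nonzero map runs $N\to\tau M$, not $\tau M\to N$; the condition $\tau M\in\mathscr{R}_{\rightarrow}(N)$ says exactly this. With the map in this direction you cannot push out from $\tau M$. A correct construction is dual: take the Auslander--Reiten sequence $0\to N\to E''\to\tau^{-1}N\to 0$ and pull back along the nonzero morphism $M\to\tau^{-1}N$ coming from $\operatorname{Ext}^1(M,N)\cong D\overline{\operatorname{Hom}}(\tau^{-1}N,M)$. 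Alternatively, and more in the spirit of \cite{S}, one simply reads the middle term off the Auslander--Reiten quiver directly: the summands of $E$ are the intersection points of $\Sigma_{\rightarrow}(N)$ and $\Sigma_{\leftarrow}(M)$, and exactness is checked on dimension vectors using the interval description. Your suggested fallback to the polygon model of \cite{BGMR,CQZ} would also work and is essentially equivalent to this.
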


\subsection{The categories of equivariant objects and skew group algebras}\label{equivariant category}
In this subsection, 
we summarize  some fundamental knowledge about  the categories of equivariant objects and skew group algebras  from~\cite{R,De,DGNO,CCR}.

Let $\mathscr{A}$ be an additive category and $G$ be a finite group whose unit is denoted by $e$. A $\emph{$G$-action}$ on $\mathscr{A}$ consists of the data $\{ F_{g},\varepsilon_{g,h}|g,h\in G\}$, where each  $F_{g}:\mathscr{A}\to\mathscr{A}$ is an auto-equivalence and each $\varepsilon_{g,h}: F_{g} F_{h}\to F_{gh}$ is a natural isomorphism such that $$\varepsilon_{gh,l}\circ\varepsilon_{g,h}F_{l}=\varepsilon_{g,hl}\circ F_{g}\varepsilon_{h,l}$$ holds for all $g,h,l\in G$. The $G$-action $\{ F_{g},\varepsilon_{g,h}|g,h\in G\}$ is $\emph{strict}$ if each $F_{g}:\mathscr{A}\to\mathscr{A}$ is an automorphism and each isomorphism $\varepsilon_{g,h}$ is the identity transformation.

 Let $\{F_g, \varepsilon_{g,h}|\ g, h\in G\}$ be a $G$-action on $\mathscr{A}$. A \emph{$G$-equivariant object} in $\mathscr{A}$ is a pair $(X, \alpha)$, where $X$ is an object in $\mathscr{A}$ and $\alpha:=\{\alpha_g\}_{g\in G}$  assigns to each $g\in G$ an isomorphism $\alpha_g: X\to F_g(X)$ that satisfies  $ \alpha_{gg'}=(\varepsilon_{g,g'})_X \circ F_g(\alpha_{g'}) \circ \alpha_g.$
A morphism $f\colon(X,\alpha)\rightarrow (Y,\beta)$ is a morphism $f\colon X\rightarrow Y$ in $\mathscr{A}$ such that 
$\beta_g\circ f=F_g(f)\circ \alpha_g$ for all $g\in G$. This gives rise to
the category $\mathscr{A}^G$ of $G$-equivariant objects, and the \emph{forgetful functor} $U\colon \mathscr{A}^G\to \mathscr{A}$ defined by $U(X, \alpha)=X$ and $U(f)=f$. The forgetful functor $U$ admits a left adjoint ${\rm Ind}\colon\mathscr{A} \to \mathscr{A}^G$, which is known as the \emph{induction functor}. 
For an object $X$, set $${\rm Ind}(X)=(\oplus_{h\in G}F_{h}(X),\varepsilon(X)),$$ where for each $g\in G$, the isomorphism
$\varepsilon(X)_{g}:\oplus_{h\in G}F_{h}(X)\to F_{g}(\oplus_{h\in G}F_{h}(X))$ is induced by the isomorphism $(\varepsilon_{g,g^{-1}h})_{X}^{-1}:F_{h}(X)\to F_{g}(F_{g^{-1}h}(X))$ and for a morphism
$\theta:X\to Y$, set $${\rm Ind}(\theta)=\oplus_{h\in G}F_{h}(\theta):{\rm Ind}(X)\to {\rm Ind}(Y).$$

When focus on the category $A$-mod of left $A$-modules, where $A$ is a finite dimensional $\Bbbk$-algebra, let ${\rm Aut}_{k}(A)$ be the group of $\Bbbk$-algebra automorphisms on $A$. We say that $G$ acts on $A$ by $\Bbbk$-algebra automorphisms, if there is a group homomorphism $G\to {\rm Aut}_{\Bbbk}(A)$. In this case, we identify elements in $G$ with their images under this homomorphism. The corresponding \emph{skew group algebra} $AG$ is defined as follows: $AG=\oplus_{g\in G} Au_{g} $ is a free left $A$-module with basis $\{u_g|g \in G\}$ and the multiplication is given by $(au_{g})(bu_{h})=ag(b)u_{gh}$.

For a $\Bbbk$-algebra automorphism $g$ on $A$ and an $A$-module $M$, the {\rm twisted module} $^{g}M$ is defined such that $^{g}M=M$ as a vector space and that the new $A$-action $``\circ"$ is given by $a\circ m= g(a)m$. This gives rise to a $\Bbbk$-linear automorphism 
$^{g}(-): A \text{-mod}\to A \text{-mod}, $
 which  acts on morphism by the identity.
It is easy to see that for two $\Bbbk$-algebra automorphisms $g$ and $h$ on $A$, $^{h}(^{g}M)=\ ^{gh}M$ holds for any $A$-module $M$. Then there is a strict $\Bbbk$-linear $G$-action on $A\text{-mod}$ by setting $F_{g}=\ ^{g^{-1}}(-)$. Moreover, there is an equivalence of categories
$$\Phi:(A\text{-mod})^{G}\to AG\text{-mod},$$
by sending a $G$-equivariant object $(X,\alpha)$ to the $AG$-module $X$, where the $AG$-module structure is given by $(au_g)x=a\alpha_{g^{-1}}(x)$. Using this equivalence, the induction functor ${\rm Ind}\colon\mathscr{A} \to \mathscr{A}^G$ is identified with $AG\otimes_{A}{-}:A\text{-mod}\rightarrow AG\text{-mod}$.
\section{Some important functors} \label{sec.3}
Let $Q_{A_{2n-3}}$ denote the quiver with the underlying graph depicted in
Figure \ref{fig:A2n-3} and directional symmetry. Specifically, for all $\alpha_i\in Q_{A_{2n-3}, \ 1}$,
  the  arrows 
 $\alpha_i$ and $\alpha_{2n-3-i}$ share the same direction. 
 \begin{figure}[h] 
 \centering
\begin{tikzpicture}[scale=0.9]
\node (0) at (-1.65,0) {\small$\overline{Q}_{A_{2n-3}}=$};
\node (1) at (-0.2,0) {\small{$n-1$}};
\node (2) at (0.9,1) {\small{$n-2$}};
\node (b) at (1,-1) {\small{$n$}};
\node (3) at (2.7,1) {\small{$n-3$}};
\node (c) at (2.7,-1) {\small{$n+1$}};
\node (4) at (4.1,1) {\dots};
\node (d) at (4.1,-1) {\dots};
\node (5) at (5.5,1) {\small{$2$}};
\node (e) at (5.5,-1) {\small{$2n-4$}};
\node (6) at (7.5,1) {\small{$1$}};
\node (f) at (7.5,-1) {\small{$2n-3$}}; 
\draw (1)--(2)-- (3)-- (4) -- (5)-- (6) ;
\draw (1) -- (b) -- (c)-- (d) -- (e)-- (f) ;
\draw (-0.6,0.9) node [anchor=north west] [align=left] {\small$\alpha_{n-2}$};
\draw (-0.6,-0.4) node [anchor=north west] [align=left] {\small$\alpha_{n-1}$};
\draw (1.4,1.55) node [anchor=north west] [align=left] {\small$\alpha_{n-3}$};
\draw (1.4,-1.0) node [anchor=north west] [align=left] {\small$\alpha_{n}$};
\draw (6,1.45) node [anchor=north west] [align=left] {\small$\alpha_{1}$};
\draw (6,-1.00) node [anchor=north west] [align=left] {\small$\alpha_{2n-4}$};
\end{tikzpicture}
    \caption{The underlying graph of $Q_{A_{2n-3}}$}
    \label{fig:A2n-3}
\end{figure}
Let $G$ be a cyclic group of order 2 with generator $g$, where $g$ is the automorphism of  $Q_{A_{2n-3}}$ given by $g(i) =2n-2-i$  and
$g(\alpha_{i}) =\alpha_{2n-3-i}$ for each $i\in Q_{A_{2n-3}, \ 0}$ and $\alpha_i\in Q_{A_{2n-3}, \ 1}$.  
By~\cite[Section~2]{R}, the skew group algebra $(\Bbbk Q_{A_{2n-3}})G$ is Morita equivalent to the path algebra $\Bbbk Q_{D_{n}}$, where the underlying graph  of the quiver $Q_{D_{n}}$ is  shown in Figure~\ref{fig:Dn}
and the direction of the arrow $\beta_{i}$ is the same as $\alpha_{i}$ for all $i\in\{1,2,\dots,n-1\}$. Therefore,  $Q_{D_{n}}$ is a type $\mathbb{D}$ quiver with directional symmetry.
\begin{figure}[h]  \centering
\begin{tikzpicture}
\node (0) at (-1,1) {\small$\overline{Q}_{D_{n}}=$};
\node (1) at (0,2) {\small$n-1$};
\node (a) at (0,0) {\small$n$};
\node (2) at (1,1) {\small$n-2$};
\node (3) at (3.5,1) {\small$n-3$};
\node (4) at (5,1) {\dots};
\node (5) at (6,1) {\small$2$};
\node (6) at (7.5,1) {\small$1$}; 
\draw (1)--(2)-- (3)-- (4) -- (5)-- (6) ;
\draw (a) -- (2)  ;
\draw (0.5,1.9) node [anchor=north west] [align=left] {\small$\beta_{n-2}$};
\draw (0.5,0.6) node [anchor=north west] [align=left] {\small$\beta_{n-1}$};
\draw (1.8,1.5) node [anchor=north west] [align=left] {\small$\beta_{n-3}$};
\draw (6.5,1.5) node [anchor=north west] [align=left] {\small$\beta_{1}$};
\end{tikzpicture}
\caption{The underlying graph of $Q_{D_{n}}$}
    \label{fig:Dn}
\end{figure}

For simplification of the notations,  we abbreviate $Q_{A_{2n-3}}$ as $Q_{A}$ and $Q_{D_{n}}$ as $Q_{D}$, denote the category $\Bbbk Q_{A}\text{-mod}$ by $\mathcal{A}$ and the category $\Bbbk Q_{D}\text{-mod}$ by  $\mathcal{D}$  in the rest of the paper. 

According to the theory of group actions, the action of $G$ on $Q_{A}$ induces a strict action $\{ F_{g},\varepsilon_{g,h}|g,h\in G\}$ of $G$ on $\mathcal{A}$, where the automorphism $F_{g}: \mathcal{A} \to \mathcal{A} $ 
is determined thus: for any projective module $P_{i}$ corresponding to $i\in Q_{A,\ 0}$ and $k\in\{0,1,\dots,n-2\}$
\begin{equation}\label{F_{g}}
F_{g}(\tau^{-k} P_{i})=\tau^{-k} P_{2n-2-i}.
\end{equation}

Next, we will discuss  the functors discussed in Section \ref{equivariant category} that associated with the strict  action  $\{ F_{g},\varepsilon_{g,h}|g,h\in G\}$  on the category $\mathcal{A}$. To  characterize these functors better, we initially provide the following two observations.

\begin{remark}\label{3.2}
(1) Denote by  $e_{j}$ the primitive orthogonal idempotent of $\Bbbk Q_{A}$ corresponding to $j\in Q_{A,0}$.
By the definition of $F_{g}$, we get that equation (\ref{Dim A}) holds for all $i,j\in Q_{A,0}$.
\begin{equation}\label{Dim A}
{\rm dim}_{\Bbbk} e_{j} (\tau^{-k}P_{i})={\rm dim}_{\Bbbk} e_{2n-2-j} ( \tau^{-k}P_{2n-2-i} ). 
\end{equation}

(2) Denote by $\overline{e}_{j}$ the primitive orthogonal idempotent of $\Bbbk Q_{D}$ corresponding to $j\in Q_{D,0}$ and
by $ \overline{P}_{i}$  the projective module corresponding to $i\in Q_{D,\ 0}$. Then  
\[{\rm dim}_{\Bbbk}\overline{e}_{j}(\tau^{-k}\overline{P}_{n-1})=\begin{cases}
{\rm dim}_{\Bbbk}e_{j}(\tau^{-k}P_{n-1}),  &{\rm for}\ j=1,2,\dots ,n-2,\\
1-(k\ {\rm mod} \ 2),  &{\rm for}\ j=n-1,\\
k\ {\rm mod} \ 2,&{\rm for}\ j=n,\\
\end{cases}\]
\[{\rm dim}_{\Bbbk}\overline{e}_{j}(\tau^{-k}\overline{P}_{n})=
\begin{cases}
{\rm dim}_{\Bbbk}e_{j}(\tau^{-k}P_{n-1}),  &{\rm for}\ j=1,2,\dots ,n-2,\\
k\ {\rm mod} \ 2,  &{\rm for}\ j=n-1,\\
1-(k\ {\rm mod} \ 2),&{\rm for}\ j=n,\\
\end{cases}\]
and for  $i\in Q_{D,\ 0}\setminus\{n-1,n\}$,  
\[{\rm dim}_{\Bbbk}\overline{e}_{j}(\tau^{-k}\overline{P}_{i})=\begin{cases}
{\rm dim}_{\Bbbk}e_{j}(\tau^{-k}P_{i})+{\rm dim}_{\Bbbk}e_{2n-2-j}(\tau^{-k}P_{i}),     &{\rm for}\ j=1,2,\dots ,n-2, \\
{\rm dim}_{\Bbbk}e_{n-1}(\tau^{-k}P_{i}),   &{\rm for}\ j=n-1,\\
{\rm dim}_{\Bbbk}e_{n-1}(\tau^{-k}P_{i}),   &{\rm for}\ j=n.\\
\end{cases}\]
\end{remark}
Based on the above remark, by performing calculations, we can obtain the action of the following functors on objects.

\begin{itemize}
\item[(1)] 
The equivalent functor  
$\Phi:\mathcal{A}^{G}\to \mathcal{D}$ satisfies
\[
\Phi \left(P_{n-1},\alpha\right)=\overline{P}_{n-1}
,\   \Phi \left(P_{n-1},\beta\right)=\overline{P}_{n},\ \Phi \left(P_{i}\oplus P_{2n-2-i},\delta\right)=\overline{P}_{i},\ \forall i\in \{1,2,\dots,n-2\},
\]
where 
\[\begin{cases}
\alpha_{e}=1,\\
\alpha_{g}=1,\\
\end{cases}
\ 
\begin{cases}
\beta_{e}=1,\\
\beta_{g}=-1,\\
\end{cases}
\ \textup{and} \quad 
\begin{cases}
\delta_{e}=\left(\begin{smallmatrix}1& 0\\0&1 \end{smallmatrix}\right),\\
\delta_{g}=\left(\begin{smallmatrix}0& 1\\1&0 \end{smallmatrix}\right).\\
\end{cases}
\]
\item[(2)] 
The functor   $ \psi:=(\Bbbk Q_{A})G\otimes_{\Bbbk Q_{A}}{-}:\mathcal{A}\to \mathcal{D}$ is  exact by~\cite[Section 2.1]{R} and we have
\[
\psi (\tau^{-k}P_{n-1})=\tau^{-k}\overline{P}_{n-1}\oplus \tau^{-k}\overline{P}_{n},\ \psi (\tau^{-k}P_{i})=\tau^{-k}\overline{P}_{i}=\psi (\tau^{-k}P_{2n-2-i}),\ \forall i\in \{1,2,\dots,n-2\}.
\]
\end{itemize}

 Toward the end of this section, we investigate the relationship between the first extension groups  under the action of  $F_g$, yielding the following properties.

\begin{proposition}\label{prop:E}
Let  $M$ and $N$ be two indecomposable representations over $Q_{A}$.  
 \begin{enumerate}
\item [(1)] ${\rm Ext}_{\mathcal{A}}^{1}(M,F_{g}(M))=0={\rm Ext}_{\mathcal{A}}^{1}(F_{g}(M),M)$;
\item [(2)] The functor $F_g$ induces a bijection between ${\rm Ext}_{\mathcal{A}}^{1}(M, N)$ and ${\rm Ext}_{\mathcal{A}}^{1}(F_{g}(M),F_{g}(N))$. In particular, ${\rm Ext}_{\mathcal{A}}^{1}(M,N)=0$ if and only if 
${\rm Ext}_{\mathcal{A}}^{1}(F_{g}(M),F_{g}(N))=0$.
\end{enumerate}
\end{proposition}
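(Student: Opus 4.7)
The plan is to dispatch (2) by a general functoriality argument, and then derive (1) by case analysis on whether $M$ is fixed by $F_g$.

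For (2), observe that $F_g = {}^{g^{-1}}(-)$ is induced by the $\Bbbk$-algebra automorphism of $\Bbbk Q_A$ coming from the quiver automorphism $g$, so $F_g$ is an exact auto-equivalence of $\mathcal{A}$. Any such functor sends projective resolutions to projective resolutions and induces isomorphisms on $\mathrm{Hom}$-spaces, hence induces a natural isomorphism $\mathrm{Ext}^1_\mathcal{A}(M, N) \cong \mathrm{Ext}^1_\mathcal{A}(F_g(M), F_g(N))$. The ``in particular'' clause follows.

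For (1), I would split into two cases. If $M \cong F_g(M)$---which by (\ref{F_{g}}) occurs precisely for $M = \tau^{-k}P_{n-1}$---then $\mathrm{Ext}^1_\mathcal{A}(M, F_g(M)) \cong \mathrm{Ext}^1_\mathcal{A}(M, M) = 0$, because every indecomposable module over the hereditary Dynkin algebra $\Bbbk Q_A$ is exceptional. If $M \not\cong F_g(M)$, I would argue via the induction/forgetful adjunction $\mathrm{Ind} \dashv U$. Since $\mathrm{Ind}$ is exact (as $(\Bbbk Q_A)G$ is free over $\Bbbk Q_A$) and preserves projectives (as its right adjoint $U$ is exact), the adjunction extends to $\mathrm{Ext}^1$. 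Combined with the equivalence $\Phi\colon \mathcal{A}^G \to \mathcal{D}$ and the identification $\Phi \circ \mathrm{Ind} = \psi$, setting $Y = \mathrm{Ind}(M)$ yields
\[
\mathrm{Ext}^1_\mathcal{D}(\psi(M), \psi(M)) \cong \mathrm{Ext}^1_\mathcal{A}(M, M) \oplus \mathrm{Ext}^1_\mathcal{A}(M, F_g(M)).
\]
By the formulas for $\psi$ listed just before the statement of the proposition, in this second case $\psi(M) = \tau^{-k}\overline{P}_i$ for some $i \in \{1, \ldots, n-2\}$, which is indecomposable over the hereditary Dynkin algebra $\Bbbk Q_D$ and hence rigid; so the left-hand side vanishes and thus $\mathrm{Ext}^1_\mathcal{A}(M, F_g(M)) = 0$. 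The vanishing $\mathrm{Ext}^1_\mathcal{A}(F_g(M), M) = 0$ then follows by combining (2) with $F_g^2 = \mathrm{id}$: one has $\mathrm{Ext}^1_\mathcal{A}(F_g(M), M) \cong \mathrm{Ext}^1_\mathcal{A}(F_g^2(M), F_g(M)) = \mathrm{Ext}^1_\mathcal{A}(M, F_g(M)) = 0$.

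The main technical point I expect is justifying that the adjunction $\mathrm{Ind} \dashv U$ extends to $\mathrm{Ext}^1$; this rests on (a) the exactness of $\mathrm{Ind} = (\Bbbk Q_A)G \otimes_{\Bbbk Q_A}(-)$, stemming from the freeness of the skew group algebra over $\Bbbk Q_A$, and (b) the preservation of projectives under $\mathrm{Ind}$, obtained from the exactness of $U$ via the standard adjunction argument. The hypothesis $\mathrm{char}\,\Bbbk \neq 2$ is used implicitly through the equivalence $\Phi$. Beyond these points, the argument reduces to standard properties of hereditary Dynkin algebras.
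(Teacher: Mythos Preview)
Your argument is correct, but it proceeds quite differently from the paper's own proof.

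For (2), the paper does not invoke the general fact that an exact auto-equivalence induces isomorphisms on $\mathrm{Ext}^1$; instead it uses that $\mathrm{Ext}^1$ between indecomposables in type $\mathbb{A}$ is at most one-dimensional (Proposition~\ref{2.2}(3)) and then checks directly, via exactness of $F_g$ and $F_g^2=\mathrm{id}$, that one side is nonzero iff the other is. Your functoriality argument is shorter and more conceptual, and in fact yields the stronger statement that the two spaces are naturally isomorphic rather than merely in bijection.

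For (1), the paper's proof is purely combinatorial: by equation~\eqref{F_{g}} the modules $M$ and $F_g(M)$ lie in the same $\tau$-column of the Auslander--Reiten quiver, so $\tau M$ sits strictly to the left of $F_g(M)$ and hence cannot lie in the slanted rectangle $\mathscr{R}_\rightarrow(F_g(M))$; Proposition~\ref{2.2}(3) then gives the vanishing immediately. Your approach is instead categorical: you pass through the adjunction $\mathrm{Ind}\dashv U$ and the equivalence $\Phi$ to identify $\mathrm{Ext}^1_\mathcal{D}(\psi(M),\psi(M))$ with $\mathrm{Ext}^1_\mathcal{A}(M,M)\oplus\mathrm{Ext}^1_\mathcal{A}(M,F_g(M))$, and then use rigidity of indecomposables over the Dynkin algebra $\Bbbk Q_D$. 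This is a legitimate and rather elegant argument; its cost is that it imports the equivalence $\Phi$ (and hence the hypothesis $\mathrm{char}\,\Bbbk\ne 2$) and the type $\mathbb{D}$ side into what the paper treats as a purely type $\mathbb{A}$ statement. The paper's proof is more elementary and self-contained to the type~$\mathbb{A}$ combinatorics, while yours is more structural and would transport more readily to other group actions.
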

\begin{proof}
(1)  For each indecomposable  representation   $M$ over $Q_{A}$, $\tau M\notin\mathcal{R}_{\rightarrow}(F_{g}(M))$ and $\tau F_{g}(M)\notin\mathcal{R}_{\rightarrow}(M)$. Thus this statement is a direct consequence of Proposition \ref{2.2}(3). 

(2) Note   that for any two indecomposable  representations  $M, N$ over $Q_{A}$,  the dimension of ${\rm Ext}_{\mathcal{A}}^{1}(M, N)$ is either $0$ or $1$. Consequently,  the dimension of ${\rm Ext}_{\mathcal{A}}^{1}(F_{g}(M),F_{g}(N))$ is either $0$ or $1$, since both $F_{g}(M)$ and $F_{g}(N)$ are also indecomposable  representations over $Q_{A}$. This proof will be completed by showing  that ${\rm dim}_{\Bbbk}{\rm Ext}_{\mathcal{A}}^{1}(M,N)=1$ if and only if  ${\rm dim}_{\Bbbk}{\rm Ext}_{\mathcal{A}}^{1}(F_{g}(M),F_{g}(N))=1.$

If ${\rm dim}_{\Bbbk}{\rm Ext}_{\mathcal{A}}^{1}(M,N)=1$,   let the  short exact sequence $\xi$ be a generator of ${\rm Ext}_{\mathcal{A}}^{1}(M, N)$. 
Since the functor $F_g$ is   exact, $F_g(\xi)$ is  a non-split short exact sequence in ${\rm Ext}_{\mathcal{A}}^{1}(F_{g}(M),F_{g}(N))$. 
 On the other hand, if 
 ${\rm dim}_{\Bbbk}{\rm Ext}_{\mathcal{A}}^{1}(F_{g}(M),F_{g}(N))=1,$ then the generator $\xi^{'}$ of ${\rm Ext}_{\mathcal{A}}^{1}(F_{g}(M),F_{g}(N))$ has the form 
$0 \to F_{g}(N) \to F \to F_{g}(M) \to 0$. Since   $F_g$ satisfies $F_{g}\circ F_{g}={\rm Id}_{\mathscr{A}}$,  it follows that $F_g(\xi^{'})$
 is a non-split short exact sequence in ${\rm Ext}_{\mathcal{A}}^{1}(M,N)$. 
\end{proof}


\section{Geometric models for $\Bbbk Q_{A}\text{-}{\rm mod}$ and $\Bbbk Q_{D}\text{-}{\rm mod}$}\label{sec.4}

In this section, we recall a geometric construction for the category  $\mathcal{A}$ in terms of a convex polygon  from 
 \cite{BGMR,CQZ}. And then we combine this polygon  with the relationship between $Q_{A}$ and $Q_{D}$ provided in Section \ref{sec.3}, to give a geometric model for the category $\mathcal{D}$.   

\subsection{A geometric model for $\Bbbk Q_{A}\text{-mod}$}\label{subsec.4.1}
In \cite{CQZ}, a $(2n-2)$-gon $P(Q_{A})$ was constructed  via $Q_{A}$ as follows: First, define the vertices by setting 
\[X_{s}=(x_{s},y_{s})\in \{(x,y)|x^{2}+y^{2}=1\} \] for each $s\in\{0,1,\dots,2n-3\}$ with 
 \[
y_{s}=-1+\frac{2s}{2n-3}\ \ {\rm and} 
  \ \
x_{s}={\rm sign}(X_{s})\sqrt{1-y_{s}^{2}},
\] 
 where $x_{0}=0=x_{2n-3}$ and for $s\in\{1,2,\dots,2n-4\}$, 
\[{\rm sign}(X_{s})=
\begin{cases}
+,    & {\rm if} \ s\xrightarrow{\alpha_{s}} s+1 \in Q_{A, \ 1};\\
-,   &{\rm if} \ s\xleftarrow{\alpha_{s}}  s+1\in Q_{A, \ 1}.\\
\end{cases}
\]  
Next, starting  from vertex $X_{0}$, connect   the vertices in the set $\{X_{s}|s=0,1,\dots, 2n-3
\}$  sequentially in a clockwise direction to form  a closed  figure, denoted as $P(Q_{A})$.
It is evident that $P(Q_{A})$ is a centrally symmetric convex $(2n-2)$-gon.
Precisely, the vertices $X_{s}$ and  $X_{2n-3-s}$ are always  symmetric about the center $O=(0,0)$ of $P(Q_{A})$ for all $s\in\{0,1, \dots,2n-3\}$. See Figure \ref{fig:A} for an example.
\begin{figure}[h]
\centering
\begin{tikzpicture}[scale=0.9]
\node (0) at (-1,0) {\small$Q_{A_{7}}=$};
\node (1) at (0,0) {\small$4$};
\node (2) at (1,1) {\small$3$};
\node (b) at (1,-1) {\small$5$};
\node (3) at (2,1) {\small$2$};
\node (c) at (2,-1) {\small$6$};
\node (3) at (3,1) {\small$1$};
\node (c) at (3,-1) {\small$7$};
\draw (1.1,1.43) node [anchor=north west] [align=left] {\small$\alpha_{2}$};
\draw (2.2,1.43) node [anchor=north west] [align=left] {\small$\alpha_{1}$};
\draw (-0.15,0.9) node [anchor=north west] [align=left] {\small$\alpha_{3}$};
\draw (-0.1,-0.4) node [anchor=north west] [align=left] {\small$\alpha_{4}$};
\draw (1.2,-1) node [anchor=north west] [align=left] {\small$\alpha_{5}$};
\draw (2.2,-1) node [anchor=north west] [align=left] {\small$\alpha_{6}$};
\draw[<-] (0.2,0.15) -- (0.75,0.95);
\draw[<-] (0.2,-0.15) -- (0.75,-0.95);
\draw[->] (1.25,1) -- (1.75,1);
\draw[->] (1.25,-1) -- (1.75,-1);
\draw[<-] (2.25,1) -- (2.75,1);
\draw[<-] (2.25,-1) -- (2.75,-1);
\end{tikzpicture}
\hspace{2cm}
\begin{tikzpicture}[scale=0.9 ]
\node (0) at (-2.5,0) {\small $P(Q_{A_{7}})=$};
\node (1) at (0,.9){};
\node (2) at (-0.67,0.72){};
\node (3) at (0.9,0.44){};
\node (4) at (-0.987,-0.16){};
\node (5) at (0.987,-0.16){};
\node (6) at (-0.9,-0.44){};
\node (7) at (0.67,-0.72){};
\node (8) at (0,-.95){};
		\draw (1) node[above] {\small$X_{7}$};
		\draw (2) node[left]{\small$X_{6}$};
		\draw (3) node[right] {\small$X_{5}$};	
\node (9) at (-1.25,-0.0){\small$X_{4}$};
		\draw (5) node[right] {\small$X_{3}$};
		\draw (6) node[below left] {\small$X_{2}$};
           \draw (7) node[right] {\small$X_{1}$};
		\draw (8) node[below] {\small$X_{0}$};

\draw (0,1) -- (0.9,0.44)--(0.987,-0.16)--(0.67,-0.72)--(0,-1)--(-0.9,-0.44)--(-0.987,0.16)--(-0.67,0.72)--(0,1);
\end{tikzpicture}
\caption{The quiver $Q_{A_{7}}$ and the polygon $P(Q_{A_{7}})$}
\label{fig:A}
\end{figure}

Now we   recall  from \cite{BGMR}  the geometric realization for the category $\mathcal{A}$ in terms of $(2n-2)$-gon $P(Q_{A})$. 
First, let us establish some notation conventions: for any vertex $X_{s}$ of $P(Q_{A})$, we denote by $X_{R_{A}(s)}$  the clockwise of $X_{s}$ on the boundary of $P(Q_{A})$, and by $X_{R_{A}^{-1}(s)}$ 
 counterclockwise neighbor of $X_{s}$.  Define the set $\omega$ as follows
\[
\omega =\{\gamma(s,t)\mid -1\le y_{s}<y_{t}\le 1\},\]
where  $\gamma(s,t)$ is the line segment of $P(Q_{A})$ connecting the vertices $X_{s}$ and $X_{t}$.

 For each $\gamma(s,t)\in \omega$, the line segment 
$\gamma(s,R_{A}^{-1}(t))$ is called a \emph{pivot} of 
$\gamma(s,t)$ if it lies in the set $\omega$. Similarly, the line segment  $\gamma(R_{A}^{-1}(s),t)$ is a  \emph{pivot} of $\gamma(s,t)$ if it lies in $\omega$. 
\begin{remark}\label{rek:A-iif} 
Keep the notation as above.
For each $\gamma(s,t)\in \omega$, $\gamma(u,v)$ is a pivot of $\gamma(s,t)$ if and only if $\gamma({2n-3-v},{2n-3-u})$ is a pivot of $\gamma({2n-3-t},{2n-3-s})$. 
\end{remark}

Define a translation quiver $(\Gamma_{A},R_{A})$ of $P(Q_{A})$ with respect to pivots, where the vertices of $\Gamma_{A}$ are all the line segments in
 $\omega$; there is an arrow $\gamma(s,t)\to\gamma({u},{v})$ in $\Gamma_{A}$ if and only if $\gamma({u},{v})$ is a pivot of $\gamma(s,t)$, and
 $R_{A}$ is a translation on the set $\omega$ with
\[ 
R_{A}(\gamma(s,t))=
\begin{cases}
\gamma(R_{A}(s),R_{A}(t))     &{\rm if} \  \gamma(R_{A}(s),R_{A}(t))\in \omega; \\
0   & \text{otherwise}.\\
\end{cases}\]

Let $\mathcal{C}_{A}$ be the mesh category of the translation quiver $(\Gamma_{A},R_{A})$.  
Then there is a functor $F_{A}$  from $\mathcal{C}_{A}$ to the abelian category ${\rm ind}\ Q_{A}$ defined as follows: 
On objects,
$F_{A}(\gamma(s,t))=M(s+1,t),$
 where $M(s+1,t)$ is the indecomposable representation supported on the vertices between $s+1$ and $t$.
 On  morphisms,  define $F_{A}\left(\gamma(s,t)\to \gamma(R_{A}^{-1}(s),t)\right) $ to  be the irreducible morphism $M(s+1,t)\to M(R_{A}^{-1}(s)+1,t)$. 
 Similarly,  let $F_{A}\left(\gamma(s,t)\to \gamma(s,R_{A}^{-1}(t))\right) $   be the irreducible morphism  $M(s+1,t)\to M(s+1,R_{A}^{-1}(t))$, see \cite[Definition~4.5]{BGMR} for a detailed definition.

\begin{theorem}[{See  \cite{BGMR}}]\label{thm:FA} 
The functor $ F_{A}:\mathcal{C}_{A}\to {\rm ind}\ Q_{A}$ is an equivalence of categories. 
\end{theorem}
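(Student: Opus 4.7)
The plan is to show that $F_{A}$ arises from an isomorphism of translation quivers $(\Gamma_{A}, R_{A}) \cong \Gamma_{{\rm rep}_{\Bbbk} Q_{A}}$, from which the equivalence $F_{A}$ follows by the classical fact that for a Dynkin quiver $Q$ the mesh category of $\Gamma_{{\rm rep}_{\Bbbk} Q}$ is equivalent to ${\rm ind}\ Q$.

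First I would verify that $\gamma(s,t) \mapsto M(s+1,t)$ is a bijection between $\omega$ and the isoclasses of indecomposable representations of $Q_{A}$; both sides are counted by $\binom{2n-2}{2}$ since the indecomposables of a type $\mathbb{A}_{2n-3}$ path algebra are the interval modules $M(i,j)$ with $1 \le i \le j \le 2n-3$. Next, I would match arrows: a pivot $\gamma(R_{A}^{-1}(s),t)$ or $\gamma(s, R_{A}^{-1}(t))$ of $\gamma(s,t)$ amounts to extending the interval $[s+1,t]$ by one vertex on the appropriate end, which is exactly the description of the at most two irreducible morphisms out of $M(s+1,t)$. Then I would check that $R_{A}$ implements the inverse Auslander-Reiten translation $\tau^{-1}$: the sign function used in the placement of the vertices $X_{s}$, which encodes the orientation of $\alpha_{s}$, is designed so that simultaneous clockwise rotation of both endpoints of $\gamma(s,t)$ produces the endpoints of the line segment corresponding to $\tau^{-1}M(s+1,t)$, with the boundary case $R_{A}(\gamma) = 0$ matching the indecomposables on which $\tau^{-1}$ is undefined. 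Once these identifications are in place, the mesh relations on the two sides coincide under $F_{A}$, so $F_{A}$ descends to a well-defined functor that is simultaneously essentially surjective, full, and faithful.

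The main technical obstacle is the third step, namely verifying that the geometric clockwise rotation $R_{A}$ agrees with $\tau^{-1}$ across all orientation patterns of $Q_{A}$, since the rotation interacts with the sign function ${\rm sign}(X_{s})$ in a way that depends on where the orientation reverses. A convenient way to handle this is a knitting argument: identify on the geometric side the line segments $\gamma$ with $R_{A}(\gamma) = 0$, match these with the injective representations $I_{i}$ of $Q_{A}$, and then reconstruct the rest of the correspondence inductively along meshes. In this way the delicate sign bookkeeping is localized to a finite base case, and the remaining verification is purely combinatorial.
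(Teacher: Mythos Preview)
The paper does not give its own proof of this theorem; it is quoted verbatim from \cite{BGMR} and used as a black box. Your outline is essentially the argument carried out there: one shows that $F_A$ induces an isomorphism of translation quivers $(\Gamma_A,R_A)\to(\Gamma_{{\rm rep}_\Bbbk Q_A},\tau)$ by matching vertices, arrows, and the translation, and then passes to mesh categories using the standard description of ${\rm ind}\ Q$ for Dynkin $Q$.

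One directional slip worth correcting: in the conventions of this paper the clockwise rotation $R_A$ corresponds to $\tau$, not to $\tau^{-1}$ (compare the analogous identity $F_D\circ R_D=\tau\circ F_D$ established in Theorem~\ref{Thm:A}(2) for the type~$\mathbb{D}$ case). Consequently the segments $\gamma$ with $R_A(\gamma)=0$ correspond to the indecomposable \emph{projectives}, on which $\tau$ is undefined, rather than to the injectives. This does not affect your overall strategy, only which end of the Auslander--Reiten quiver you anchor the knitting induction at.
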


\subsection{A geometric model for $\Bbbk Q_{D}\text{-mod}$}
In this subsection, we will derive a geometric model for the category $\mathcal{D}$ via $P(Q_{A})$ and  equivariant theory of group actions.

\subsubsection{The punctured polygon $P(Q_{D})$}
Recall that  there is a unique $Q_{A}$ associated with the given $Q_{D}$ and
$P(Q_{A})$ is the geometric model for the category $\mathcal{A}$.  We relabel the vertices $\{X_{s}|s=0,1,\dots, 2n-3\}$ of  $P(Q_{A})$  according to the following rule $(*)$:
\begin{itemize}
\item[-] if $y_s<0$, then relabel the vertex $X_{s}$ by $Y_{s-n+1}$;
\item[-] if $y_s>0$, then relabel the vertex $X_{s}$ by $Y_{s-n+2}$.
\end{itemize}
 Denote the punctured  polygon with vertices $\{Y_{\pm s}|s=1,2,\dots ,n-1\}$ and a puncture 
 $O=(0,0)$ by $P(Q_{D})$. Then the vertices $Y_{s}$ and  $Y_{-s}$ are symmetric about $O$ for all $s\in\{1,2,\dots ,n-1\}$.

 To illustrate the construction of $P(Q_{D})$, we present an example here.
 \begin{example}
Let $Q_{D_{5}}$ be the quiver on the left side of Figure~\ref{fig:D}. Then 
the associated quiver $Q_{A_{7}}$ and   polygon $P(Q_{A_{7}})$ are shown in Figure~\ref{fig:A}. After relabeling the vertices of $P(Q_{A_{7}})$ and adding a puncture at its center, we obtain the punctured polygon $P(Q_{D_{5}})$, illustrated on the right side of Figure~\ref{fig:D}.
\begin{figure}[h]
\centering
\begin{tikzpicture}[scale=0.96]
\node (0) at (-1,0) {\small $Q_{D_{5}}=$};
\node (1) at (0,1) {\small$4$};
\node (a) at (0,-1) {\small$5$};
\node (2) at (1,0) {\small$3$};
\node (3) at (2,0) {\small$2$};
\node (4) at (3,0) {\small$1$};
\draw (1.1,0.53) node [anchor=north west] [align=left] {\small$\beta_{2}$};
\draw (2.2,0.53) node [anchor=north west] [align=left] {\small$\beta_{1}$};
\draw (0.5,1) node [anchor=north west] [align=left] {\small$\beta_{3}$};
\draw (0.5,-0.3) node [anchor=north west] [align=left] {\small$\beta_{4}$};
\draw[<-] (0.2,0.85) -- (0.75,0.15);
\draw[<-] (0.2,-0.85) -- (0.75,-0.15);
\draw[->] (1.25,0) -- (1.75,0);
\draw[<-] (2.25,0) -- (2.75,0);
\end{tikzpicture}
\hspace{1cm}
\begin{tikzpicture}[scale=0.9]
\node (0) at (-2.5,-0.25) {\small $P(Q_{D_{5}})=$};
\node (1) at (0,0.9){};
\node (2) at (-0.67,0.72){};
\node (3) at (0.9,0.44){};
\node (4) at (-0.987,-0.16){};
\node (5) at (0.987,-0.16){};
\node (6) at (-0.9,-0.44){};
\node (7) at (0.67,-0.72){};
\node (8) at (0,-0.95){};
\node (9) at (0,0){};
\fill (9) circle(0.05);
\draw (1) node[above] {\small$Y_{4}$};
		\draw (2) node[left]{\small$Y_{3}$};
		\draw (3) node[right] {\small$Y_{2}$};	
\node (10) at (-1.25,-0.0){\small$Y_{1}$};
		\draw (5) node[right] {\small$Y_{-1}$};
		\draw (6) node[below left] {\small$Y_{-2}$};
           \draw (7) node[right] {\small$Y_{-3}$};
		\draw (8) node[below] {\small$Y_{-4}$};
\draw (9) node[above left] {\small$O$};
\draw (0,1) -- (0.9,0.44)--(0.987,-0.16)--(0.67,-0.72)--(0,-1)--(-0.9,-0.44)--(-0.987,0.16)--(-0.67,0.72)--(0,1);
\end{tikzpicture}
\caption{The quiver $Q_{D_{5}}$ and the polygon $P(Q_{D_{5}})$}
\label{fig:D}
\end{figure}
\end{example}

\subsubsection{The category of tagged line segments $\mathcal{C}_{D}$} Now we will define a category  $\mathcal{C}_{D}$ with respect to the line segments of $P(Q_D)$ and prove the equivalence between   $\mathcal{C}_{D}$ and ${\rm ind}\ Q_{D}$.
We first 
construct a set $\Omega$ as follows:
\[
\Omega=\{\gamma_{s}^{t}\mid -(n-1)\leq s<t\leq n-1, s\ne -t\}/_{\sim} \cup \{\gamma_{-t}^{t,-1}, \gamma_{-t}^{t,1}\mid 1\leq t\leq n-1\},
\]
where $\gamma_{s}^{t}$ is the unique line segment between $Y_{s}$ and $Y_{t}$ of $P(Q_D)$  with $\gamma_{s}^{t}\sim\gamma_{-t}^{-s}$, and 
 $\gamma_{-t}^{t,\epsilon}\  (\epsilon=1, -1)$ 
denotes the  two overlapping line segments between  $Y_{-t}$ and $Y_{t}$. For distinguishing the two segments, we always draw the line segment $\gamma_{-t}^{t,-1}$ in picture  with a tag  $``|"$ on it and the line segment $\gamma_{-t}^{t,1}$ in picture with no tag.

For example, consider the punctured polygon $P(Q_{D_{5}})$. In this case, there exists $\gamma_{1}^{4}\sim\gamma_{-4}^{-1}$ in $\Omega$.   Figure~\ref{fig:-1,1} illustrates $\gamma_{-3}^{3, -1}$ and $\gamma_{-3}^{3, 1}$ of  $P(Q_{D_{5}})$  in red. Specifically, $\gamma_{-3}^{3,-1}$ is tagged with  $``|"$, while $\gamma_{-3}^{3,1}$ remains untagged.

\begin{figure}[h]
\centering
\begin{tikzpicture}[scale=0.9]
\node (1) at (0+12.5,1-6){};
\node (2) at (-0.67+12.5,0.72-6){};
\node (3) at (0.9+12.5,0.44-6){};
\node (4) at (-0.987+12.5,0.16-6){};
\node (5) at (0.987+12.5,-0.16-6){};
\node (6) at (-0.9+12.5,-0.44-6){};
\node (7) at (0.67+12.5,-0.72-6){};
\node (8) at (0+12.5,-1-6){};
\node (9) at (0+12.5,0-6){};
\fill (9) circle(0.07);
\node (10) at (0+12.2,0-6.3){{\fontsize{0.27em}{0.32em}\selectfont$\gamma_{-3}^{3,-1}$}};
\draw (9) node[above right][inner sep=0.75pt] {{\fontsize{0.27em}{0.32em}\selectfont$O$}};
		\draw (1) node[above][inner sep=0.75pt] {{\fontsize{0.27em}{0.32em}\selectfont$Y_{4}$}};
		\draw (2) node[left][inner sep=0.75pt]{{\fontsize{0.27em}{0.32em}\selectfont$Y_{3}$}};
		\draw (3) node[right][inner sep=0.75pt] {{\fontsize{0.27em}{0.32em}\selectfont$Y_{2}$}};
\node (9) at (-1.25+12.5,-0.0-6){{\fontsize{0.27em}{0.32em}\selectfont$Y_{1}$}};
		\draw (5) node[right][inner sep=0.75pt] {{\fontsize{0.27em}{0.32em}\selectfont$Y_{-1}$}};
		\draw (6) node[below left][inner sep=0.75pt] {{\fontsize{0.27em}{0.32em}\selectfont$Y_{-2}$}};
           \draw (7) node[right][inner sep=0.75pt] {{\fontsize{0.27em}{0.32em}\selectfont$Y_{-3}$}};
		\draw (8) node[below][inner sep=0.75pt] {{\fontsize{0.27em}{0.32em}\selectfont$Y_{-4}$}};
\draw[line width=1.2pt,red] (-0.67+12.65,0.72-6.35)--(-0.67+12.85,0.72-6.15);
\draw (0+12.5,1-6) -- (0.9+12.5,0.44-6)--(0.987+12.5,-0.16-6)--(0.67+12.5,-0.72-6)--(0+12.5,-1-6)--(-0.9+12.5,-0.44-6)--(-0.987+12.5,0.16-6)--(-0.67+12.5,0.72-6)--(0+12.5,1-6);
\draw[->,line width=1.2pt,red] (0.67+12.5,-0.72-6)--(-0.67+12.5,0.72-6);
\draw[line width=1.2pt,red] (0.67+12.5,-0.72-6)--(0+12.5,0-6);
\fill (2) circle(0.05);
\fill (7) circle(0.05);
\end{tikzpicture}
\hspace{1cm}
\begin{tikzpicture}[scale=0.9]
\node (1) at (0+12.5,1-6){};
\node (2) at (-0.67+12.5,0.72-6){};
\node (3) at (0.9+12.5,0.44-6){};
\node (4) at (-0.987+12.5,0.16-6){};
\node (5) at (0.987+12.5,-0.16-6){};
\node (6) at (-0.9+12.5,-0.44-6){};
\node (7) at (0.67+12.5,-0.72-6){};
\node (8) at (0+12.5,-1-6){};
\node (9) at (0+12.5,0-6){};
\node (10) at (0+12.3,0-6.3){{\fontsize{0.27em}{0.32em}\selectfont$\gamma_{-3}^{3,1}$}};
\fill (9) circle(0.07);
\draw (9) node[above right][inner sep=0.75pt] {{\fontsize{0.27em}{0.32em}\selectfont$O$}};
		\draw (1) node[above][inner sep=0.75pt] {{\fontsize{0.27em}{0.32em}\selectfont$Y_{4}$}};
		\draw (2) node[left][inner sep=0.75pt]{{\fontsize{0.27em}{0.32em}\selectfont$Y_{3}$}};
		\draw (3) node[right][inner sep=0.75pt] {{\fontsize{0.27em}{0.32em}\selectfont$Y_{2}$}};
\node (9) at (-1.25+12.5,-0.0-6){{\fontsize{0.27em}{0.32em}\selectfont$Y_{1}$}};
		\draw (5) node[right][inner sep=0.75pt] {{\fontsize{0.27em}{0.32em}\selectfont$Y_{-1}$}};
		\draw (6) node[below left][inner sep=0.75pt] {{\fontsize{0.27em}{0.32em}\selectfont$Y_{-2}$}};
           \draw (7) node[right][inner sep=0.75pt] {{\fontsize{0.27em}{0.32em}\selectfont$Y_{-3}$}};
		\draw (8) node[below][inner sep=0.75pt] {{\fontsize{0.27em}{0.32em}\selectfont$Y_{-4}$}};

\draw (0+12.5,1-6) -- (0.9+12.5,0.44-6)--(0.987+12.5,-0.16-6)--(0.67+12.5,-0.72-6)--(0+12.5,-1-6)--(-0.9+12.5,-0.44-6)--(-0.987+12.5,0.16-6)--(-0.67+12.5,0.72-6)--(0+12.5,1-6);
\draw[->,line width=1.2pt,red] (0.67+12.5,-0.72-6)--(-0.67+12.5,0.72-6);
\draw[line width=1.2pt,red] (0.67+12.5,-0.72-6)--(0+12.5,0-6);
\fill (2) circle(0.05);
\fill (7) circle(0.05);
\end{tikzpicture}
\caption{The line segment $\gamma_{-3}^{3,-1}$(left) and $\gamma_{-3}^{3,1}$(right) of $P(Q_{D_{5}})$}
\label{fig:-1,1}
\end{figure}

\begin{remark}
 According to the  definition of $\Omega$, it is easy to calculate that there are $n(n-1)$  
elements in $\Omega$. To facilitate the presentation of other definitions, we denote the elements in $\Omega$ by $\gamma_{s}^{t,l}$ and call them {\emph{tagged line segments}}. Precisely, $\gamma_{s}^{t,l}=\gamma_{s}^{t}$ for the cases $s\neq -t$, and $\gamma_{s}^{t,l}=\gamma_{-t}^{t,\epsilon}$ for the cases $s= -t$.  
\end{remark}

 Let $ [s,t]$ denote  the number of the vertices on the minimal path from $Y_{s}$ to $Y_{t}$ along the boundary of $P(Q_{D})$  in counterclockwise direction (including $Y_{s}$ and $Y_{t}$). For example, see the right side of Figure~\ref{fig:D}, the minimal path from $Y_{-2}$ to $Y_{4}$ along the boundary of $P(Q_{D_5})$ in counterclockwise direction is $Y_{-2}\to Y_{-4}\to Y_{-3}\to Y_{-1}\to Y_{2}\to Y_{4}$, which consists of 6 vertices, so $[-2,4]=6$ on $P(Q_{D_{5}})$. 

Notice that the equation $[s,t]+[-t,-s]=2n$ holds for all $-(n-1)\leq s<t\leq n-1$. Consequently,  either $[s,t]$ or $[-t,-s]$ must be greater than or equal to $n$, while the other is less than or equal to $n$. Additionally, the tagged line segments $\gamma_{s}^{t}$ and $\gamma_{-t}^{-s}$ satisfy the relation $\gamma_{s}^{t}=\gamma_{-t}^{-s}$ in $\Omega$. Hence,  
it suffices to consider only one representative from each pair, when we discuss  the  tagged line segments in $\Omega$. 
For the rest of this section, we always select the representative elements such that $[- ,- ]\leq n$, unless otherwise specified.

 \begin{definition}\label{pivot}
Let $\gamma_{s}^{t,l}$ be a tagged line segment in $\Omega$, and $Y_{u}$ (resp.  $Y_{v}$) be the counterclockwise neighbor of $Y_{s}$ (resp.  $Y_{t}$). Define the 
\emph{pivot} of 
$\gamma_{s}^{t,l}$ 
 as follows: 

 Case I: $2\leq[s,t]\leq n-2$. In this case, $\gamma_{s}^{t,l}=\gamma_{s}^{t}$.
\begin{itemize}
\item 
If  $\gamma_{s}^{v}$ lies in  
$\Omega$, then $\gamma_{s}^{v}$ is called a pivot of 
$\gamma_{s}^{t}$;
\item
If $\gamma_{u}^{t}$ lies in  $\Omega$, then $\gamma_{u}^{t}$  is called a pivot of 
$\gamma_{s}^{t}$.
\end{itemize}

 Case II: $[s,t]=n-1$. In this case, $\gamma_{s}^{t,l}=\gamma_{s}^{t}$.
\begin{itemize}
\item 
If $t>0$, then  $\gamma_{-t}^{t,1}$ and  $\gamma_{-t}^{t,-1}$ are elements in $\Omega$; therefore, they  are   considered as pivots of $\gamma_{s}^{t}$;  
\item
If $\gamma_{u}^{t}$ lies in  $\Omega$, then
$\gamma_{u}^{t}$ is called a pivot of 
$\gamma_{s}^{t}$.
\end{itemize}

Case III: $[s,t]=n$. In this case, $\gamma_{s}^{t,l}=\gamma_{-t}^{t,\epsilon}$ for $\epsilon=-1 \ {\rm or}\ 1$.
\begin{itemize}
\item 
If $\gamma_{u}^{t}$ lies in $\Omega$, then $\gamma_{u}^{t}$  is called a pivot of
$\gamma_{-t}^{t,\epsilon}$.
\end{itemize}
 \end{definition}
\begin{example}
In Figure~\ref{fig:D}, $\gamma_{-2}^{-1}$ has the three pivots: $\gamma_{-2}^{2,-1}$, $\gamma_{-2}^{2,1}$ and $\gamma_{-4}^{-1}$; $\gamma_{-3}^{2}$ has the two pivots: $\gamma_{-3}^{4}$ and $\gamma_{-1}^{2}$; $\gamma_{-4}^{4,-1}$
has only one pivot: $\gamma_{-3}^{4}$; $\gamma_{2}^{3}$ has no pivots.
\end{example}

 Now  we define a translation quiver $(\Gamma_{D},R_{D})$ with respect to pivots, where $\Gamma_{D}$  is a quiver whose  vertices are all elements in $\Omega$, and  for two tagged line segments $\gamma_{s}^{t,l}, \gamma_{s'}^{t',l'}\in \Omega$, there is an arrow
$\gamma_{s}^{t,l}\to\gamma_{s'}^{t',l'}$   if and only if $\gamma_{s'}^{t',l'}$ is a pivot of $\gamma_{s}^{t,l}$; the translation $R_{D}$ over $\Gamma_{D}$ is defined as follows:
 for a tagged line segment $\gamma_{u}^{v,l}$ in $\Omega$, assume $Y_{s}$ (resp.  $Y_{t}$) is the clockwise neighbor of $Y_{u}$ (resp.  $Y_{v}$), then
\[ 
R_{D}(\gamma_{u}^{v})=
\begin{cases}
\gamma_{s}^{t},     &{\rm if} \  \gamma_{s}^{t}\in\Omega;\\
0, & \text{otherwise}.\\
\end{cases}\ {\rm and}\quad
R_{D}(\gamma_{-v}^{v,\epsilon})=
\begin{cases}
\gamma_{-t}^{t,-\epsilon},   &{\rm if}\  \gamma_{-t}^{t,-\epsilon}\in\Omega;\\
0,   & \text{otherwise}.\\
\end{cases}\]
 Intuitively,  $R_{D}$ acts on tagged line segments by clockwise rotation.

\begin{definition}
 Let $\mathcal{C}_{D}$ be the mesh category of the translation quiver $(\Gamma_{D},R_{D})$. We call $\mathcal{C}_{D}$ the \emph{category of tagged line segments} of $P(Q_{D})$.
\end{definition}

Next, we  construct a functor from  $\mathcal{C}_{D}$ to ${\rm ind}\ Q_{D}$, outlined in three specific steps.
The first step is to define two functors from the category $\mathcal{A}^{'}$ to ${\rm ind}\ Q_{D}$, where $\mathcal{A}^{'}$ is the full subcategory of $\mathcal{A}$ with objects $\{\tau^{-k}P_{i}\in {\rm ind}\ Q_{A}|i=1,2,\dots, n-1\}$.
 \begin{definition}\label{P1,P2}
 Let $\psi_{1}:\mathcal{A}^{'}\to {\rm ind}\ Q_{D}$ be the functor define  on objects by
\[
\psi_{1} \left(\tau^{-k}P_{i}\right)=\begin{cases}
\tau^{-k}\overline{P}_{i}, &{\rm if} \ i=1,2,\dots, n-2; \\
\tau^{-k}\overline{P}_{n-1},     &{\rm if} \ i= n-1 \ {\rm and}\ k\  {\rm mod} \ 2=0;\\
\tau^{-k}\overline{P}_{n},  &{\rm if} \ i= n-1 \ {\rm and}\ k\ {\rm mod}  \ 2=1,\\
\end{cases}
\]
and on  irreducible morphism $ h:M\to N$ by  irreducible morphism $\psi_{1}(h):\psi_{1}(M)\to \psi_{1}(N)$. 
\end{definition}
\begin{definition}
Let $\psi_{2}: \mathcal{A}^{'}\to {\rm ind}\ Q_{D}$ be the functor define  on objects by
\[
\psi_{2} \left(\tau^{-k}P_{i}\right)=\begin{cases}
\tau^{-k}\overline{P}_{i}, &{\rm if} \ i=1,2,\dots, n-2; \\
\tau^{-k}\overline{P}_{n-1},     &{\rm if} \ i= n-1 \ {\rm and}\ k\ {\rm mod}  \ 2=1;\\
\tau^{-k}\overline{P}_{n},  &{\rm if} \ i= n-1 \ {\rm and}\ k\ {\rm mod}  \ 2=0,\\
\end{cases}
\]
and on irreducible morphism $ h:M\to N$ by  irreducible morphism $\psi_{2}(h):\psi_{2}(M)\to \psi_{2}(N)$.
\end{definition}

The second step is to construct two functors from the full subcategories of $\mathcal{C}_{D}$ to the category $\mathcal{C}_{A}$. We start by defining two subsets of $\Omega$. Let 
\[\Omega_{1}=\Omega\setminus\{\gamma_{-t}^{t,-1}\mid  t=1,2,\dots, n-1 \}\ {\rm and}\  \Omega_{2}=\Omega\setminus\{\gamma_{-t}^{t,1}\mid t=1,2,\dots, n-1\}.\]
Set $\mathcal{C}^{1}_{D}$  and $\mathcal{C}^{2}_{D}$  the full subcategory of $\mathcal{C}_{D}$, consisting of objects from 
 $\Omega_{1}$  and $\Omega_{2}$, respectively. 

 Observe that  if  the vertex $X_{h'}$ with $h'\in\{0,1,\dots, 2n-3\}$ of $P(Q_{A})$ is relabeled as $Y_{h}$ of $P(Q_{D})$, then 
an arrow   $\gamma_{u}^{v,l}\to \gamma_{s}^{t,l'}$  exists in $\Gamma_{D}$ if and only if there exists an arrow   $\gamma(u',v')\to \gamma(s',t')$ in $\Gamma_{A}$.
Based on this observation, we can define functors from $\mathcal{C}^{1}_{D}$ and $\mathcal{C}^{2}_{D}$ to $\mathcal{C}_{A}$ as follows:
\begin{definition}\label{f1}
 Let $f_{1}:\mathcal{C}^{1}_{D} \to \mathcal{C}_{A}$ be the functor defined on objects by
\[
f_{1}(\gamma_{s}^{t,l})= \begin{cases}
\gamma(s',t'),     &{\rm if} \ s\ne -t;\\
\gamma(2n-3-t',t'),   & {\rm if} \ s=-t \ {\rm and}\ l=1,\\
\end{cases}
\]
and on pivot $\gamma_{s}^{t,l}\to \gamma_{u}^{v,l'}$  
by pivot $f_{1}(\gamma_{s}^{t,l})\to f_{1}(\gamma_{u}^{v,l'}).$ 
\end{definition}
\begin{definition} \label{f2}
Let $f_{2}:\mathcal{C}^{2}_{D} \to \mathcal{C}_{A}$ be the functor defined on objects by
\[
f_{2}(\gamma_{s}^{t,l})= \begin{cases}
\gamma(s',t'),     &{\rm if} \ s\ne -t;\\
\gamma(2n-3-t',t'),   & {\rm if} \ s=-t \ {\rm and}\ l=-1,\\
\end{cases}
\]
and on pivot $\gamma_{s}^{t,l}\to \gamma_{u}^{v,l'}$ 
by pivot $f_{2}(\gamma_{s}^{t,l})\to f_{2}(\gamma_{u}^{v,l'}).$ 
\end{definition}

In the third step, we will define a functor from the category $\mathcal{C}_{D}$ to ${\rm ind}\ Q_{D}$ using the functors constructed in the previous two steps, together with the equivalent functor $F_{A}: \mathcal{C}_{A}\to {\rm ind}\ Q_{A}$ recalled in Section \ref{subsec.4.1}.  

\begin{proposition}\label{corresponding}
 Assume that the vertex $X_{h'}$ of $P(Q_{A})$ with $ h'\in\{0,1,\dots ,2n-3\}$ is relabeled by $Y_{h}$ of $P(Q_{D})$. For each line segment $\gamma(s',t')\in\omega$, 
\begin{enumerate}
\item [(1)]  there exists $k\in\{0,1,\dots,n-2\}$ such that
$F_{A}(\gamma(s',t'))=\tau^{-k}P_{n-1}$ if and only if  $s'+t'=2n-3$;
 \item [(2)] 
there exist  $k\in\{0,1,\dots,n-2\}$ and $i\in\{1,2,\dots,n-2\}$ such that
$F_{A}(\gamma(s',t'))=\tau^{-k}P_{i}$ if and only if 
 $\gamma(s',t')$ satisfies $2\leq [s,t]\leq n-1$;
 \item [(3)]  
 if there exist $k\in\{0,1,\dots,n-2\}$ and $i\in Q_{A, 0}\setminus \{n-1\}$  such that $F_{A}(\gamma(s',t'))=\tau^{-k}P_{i}$, then 
$
F_{A}(\gamma({2n-3-t'},{2n-3-s'}))=\tau^{-k}P_{2n-2-i}. 
$
\end{enumerate}
\end{proposition}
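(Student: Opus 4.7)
My plan is to establish part (3) first as the key technical input, deduce part (1) as an immediate corollary, and then obtain part (2) by combining these with a pairing argument on line segments.

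For part (3), I would introduce the central symmetry $\sigma\colon X_{s'} \mapsto X_{2n-3-s'}$ of $P(Q_A)$, which induces the involution of $\omega$ sending $\gamma(s',t')$ to $\gamma(2n-3-t',2n-3-s')$. By Remark~\ref{rek:A-iif}, $\sigma$ commutes with pivots, and since clockwise rotation commutes with the $180^\circ$ rotation, $\sigma$ also commutes with the translation $R_A$; hence $\sigma$ descends to an auto-equivalence $\widehat{\sigma}$ of $\mathcal{C}_A$. Using $F_A(\gamma(s',t'))=M(s'+1,t')$, a direct calculation gives $\widehat{\sigma}(M(a,b))=M(2n-2-b,2n-2-a)$ for every indecomposable $M(a,b)$. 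On the other hand, $F_g$ is induced by the Dynkin reflection $j\mapsto 2n-2-j$ of $Q_A$ (an automorphism of $Q_A$ by directional symmetry), so it acts on indecomposables by the same formula $F_g(M(a,b))=M(2n-2-b,2n-2-a)$. Thus the two auto-equivalences agree, which is exactly the statement of (3). Part (1) then follows at once: the fixed segments of $\sigma$ are precisely those with $s'+t'=2n-3$, and the fixed indecomposables of $F_g$ in the family $\{\tau^{-k}P_i\}$ are precisely the $\tau^{-k}P_{n-1}$, since $i=2n-2-i$ forces $i=n-1$.

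For part (2), I would argue by matching two pairings. On the $P(Q_A)$ side, each non-diameter segment $\gamma(s',t')$ pairs with its centrally symmetric counterpart $\gamma(2n-3-t',2n-3-s')$; by (3), one of the pair has $F_A$-image $\tau^{-k}P_i$ with $i\le n-2$ and the other with $i\ge n$. On the $P(Q_D)$ side, the identification $\gamma_s^t=\gamma_{-t}^{-s}$ together with $[s,t]+[-t,-s]=2n$ forces exactly one of the pair to satisfy $[s,t]\le n-1$ and the other $[s,t]\ge n+1$. To align these two dichotomies I would verify the base case $k=0$ directly: for each projective $P_i$ with $i\le n-2$, one shows using the directional symmetry of $Q_A$ that $\mathrm{supp}(P_i)\subseteq\{1,\dots,n-1\}$, and then reads off from the relabeling rule $(*)$ that the image $(s_i,t_i)$ on $P(Q_D)$ satisfies $[s_i,t_i]\le n-1$. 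For $k\ge 1$ the conclusion propagates, because the relabeling $X_{s'}\mapsto Y_s$ is a bijection preserving the clockwise cyclic order, so $R_A$ acts as a rotation on $P(Q_D)$ as well and preserves $[s,t]$ along each $\tau^{-1}$-orbit.

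The main obstacle lies in the base-case verification in part (2). The containment $\mathrm{supp}(P_i)\subseteq\{1,\dots,n-1\}$ for $i\le n-2$ reduces to a case analysis on whether the middle vertex $n-1$ is a sink or a source, the only two possibilities compatible with directional symmetry. Bounding $[s_i,t_i]$ on $P(Q_D)$ then requires tracking how the alternating signs $\mathrm{sign}(X_s)$ deform the cyclic order under the relabeling; one expects the worst case to occur for the projective $P_i$ closest to $n-1$, where $[s_i,t_i]$ attains the sharp bound $n-1$, with directional symmetry preventing the bound from being exceeded.
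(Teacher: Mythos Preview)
Your treatment of parts (3) and (1) is correct and is essentially what underlies the paper's one-line proof: the central symmetry $\sigma$ of $P(Q_A)$ corresponds under $F_A$ to the functor $F_g$, which is precisely the content encoded in equation~\eqref{F_{g}} and Remark~\ref{3.2}(1). Once this is established, (3) is immediate from the explicit formula $F_A(\gamma(s',t'))=M(s'+1,t')$, and (1) follows by comparing fixed loci.

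For part (2) your route is different from the paper's and carries a genuine gap. The paper treats (2) as an equally immediate consequence of the same ingredients; the intended argument is to read the $\tau$-orbit index directly off the formula $F_A(\gamma(s',t'))=M(s'+1,t')$ together with the explicit relabeling rule $(*)$, rather than to reduce to a base case. Your reduction via $R_A$-invariance of $[s,t]$ is sound---rotation preserves arc length, and within a $\tau$-orbit the roles of the two endpoints cannot swap because $R_A(\gamma)=0$ exactly when they would---so the problem does reduce to $k=0$. But the base case itself is not established: knowing $\mathrm{supp}(P_i)\subseteq\{1,\dots,n-1\}$ only tells you that $s',t'\in\{0,\dots,n-1\}$, and translating this into a bound $[s,t]\le n-1$ requires controlling the cyclic order of $X_0,\dots,X_{n-1}$ on the boundary of $P(Q_A)$, which depends on the full pattern of signs $\mathrm{sign}(X_j)$, not merely on whether $n-1$ is a sink or a source. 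Your closing paragraph acknowledges this but does not carry it out, and the heuristic about ``the projective closest to $n-1$'' attaining the sharp bound is not justified. To close the gap you would either need a careful case analysis of the cyclic order (tracking which of $X_0,\dots,X_{n-1}$ lie on each side of the polygon for an arbitrary admissible orientation), or---more in the spirit of the paper---bypass the base case entirely by computing $[s,t]$ directly from $(s',t')$ via rule $(*)$ and comparing with the support interval of $M(s'+1,t')$.
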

\begin{proof}
It follows immediately from Remark~\ref{3.2},  Theorem~\ref{thm:FA} and the definition of $F_{A}$.
\end{proof}

According to this proposition, we can obtain the following  conclusion.

\begin{proposition}\label{Ff}
The images of functors $F_{A}\circ f_{1}: \mathcal{C}^{1}_{D}\to {\rm ind}\ Q_{A}$ and $F_{A}\circ f_{2}: \mathcal{C}^{2}_{D}\to {\rm ind}\ Q_{A}$ are both $\mathcal{A}^{'}$. Specifically,
 $$F_{A}(f_{1}(\mathcal{C}^{1}_{D}))=\mathcal{A}^{'}=F_{A}(f_{2}(\mathcal{C}^{2}_{D})).$$
\end{proposition}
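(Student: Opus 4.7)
The plan is to apply Proposition~\ref{corresponding} through a case analysis based on whether a tagged line segment in $\Omega_1$ (resp.\ $\Omega_2$) is a tagged diameter $\gamma_{-t}^{t,\epsilon}$ or an ordinary segment $\gamma_s^t$ with $s \neq -t$, and to use the relabeling rule $(*)$ as the bookkeeping device. I will detail the argument for $F_A \circ f_1$ only; the case of $F_A \circ f_2$ is verbatim the same after swapping the roles of the tags $\epsilon = 1$ and $\epsilon = -1$. Throughout, the convention of choosing from each $\sim$-class the representative with $[-,-] \leq n$ plays a crucial role: for any ordinary segment $\gamma_s^t \in \Omega$ with $s \neq -t$, it forces $2 \leq [s,t] \leq n-1$, which matches precisely the hypothesis of Proposition~\ref{corresponding}(2).

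For the inclusion $F_A(f_1(\mathcal{C}^1_D)) \subseteq \mathcal{A}'$, I take $\gamma_s^{t,l} \in \Omega_1$. If $s = -t$, then necessarily $l = 1$ by the definition of $\Omega_1$, and the definition of $f_1$ gives $f_1(\gamma_{-t}^{t,1}) = \gamma(2n-3-t', t')$, which satisfies $s' + t' = 2n-3$; Proposition~\ref{corresponding}(1) then yields $F_A(f_1(\gamma_{-t}^{t,1})) = \tau^{-k}P_{n-1} \in \mathcal{A}'$ for some $k \in \{0,\ldots,n-2\}$. Otherwise $s \neq -t$ and $2 \leq [s,t] \leq n-1$, so Proposition~\ref{corresponding}(2) yields $F_A(f_1(\gamma_s^t)) = F_A(\gamma(s',t')) = \tau^{-k}P_i$ for some $i \in \{1, \ldots, n-2\} \subseteq \{1, \ldots, n-1\}$, again an object of $\mathcal{A}'$.

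For the reverse inclusion, I fix $\tau^{-k}P_i \in \mathcal{A}'$ and split on $i$. When $i = n-1$, Proposition~\ref{corresponding}(1) provides a unique segment $\gamma(s', t') \in \omega$ with $s'+t' = 2n-3$ whose image under $F_A$ is $\tau^{-k}P_{n-1}$; the rule $(*)$ rewrites this as $f_1(\gamma_{-t}^{t,1})$ for the unique $t \in \{1, \ldots, n-1\}$ with $t' = t+n-2$, and $\gamma_{-t}^{t,1} \in \Omega_1$. When $i \in \{1, \ldots, n-2\}$, the equivalence of Theorem~\ref{thm:FA} singles out a unique $\gamma(s', t')$ with $F_A(\gamma(s',t')) = \tau^{-k}P_i$; by Proposition~\ref{corresponding}(2), its relabeled endpoints satisfy $2 \leq [s,t] \leq n-1$, so $s \neq -t$ and $\gamma_s^t \in \Omega_1$. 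The convention $[s,t] \leq n$ guarantees that $f_1(\gamma_s^t) = \gamma(s',t')$ and not the centrally symmetric companion $\gamma(2n-3-t', 2n-3-s')$, which by Proposition~\ref{corresponding}(3) would hit $\tau^{-k}P_{2n-2-i}$, an object lying outside $\mathcal{A}'$.

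The whole argument is essentially a translation of Proposition~\ref{corresponding} through the relabeling rule $(*)$, and I expect no serious obstacle. The one subtlety worth flagging is the verification in the last step that the $\sim$-representative picked out by the convention always lands on $\tau^{-k}P_i$ rather than on its centrally symmetric partner $\tau^{-k}P_{2n-2-i}$; this is precisely the content of Proposition~\ref{corresponding}(2)--(3) coupled with $(*)$, so it amounts to a careful inspection of indices rather than any new idea.
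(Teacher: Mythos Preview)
Your proposal is correct and follows essentially the same approach as the paper: both reduce the claim to Proposition~\ref{corresponding}(1) and (2) via a case split on whether the tagged segment is a diameter or not. The only cosmetic difference is that the paper first observes $f_{1}(\mathcal{C}^{1}_{D})=f_{2}(\mathcal{C}^{2}_{D})$ directly from the definitions to handle both functors at once, whereas you treat $f_2$ by symmetry; your version is simply a more explicit unpacking of the paper's two-line argument.
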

\begin{proof}
By the definition of $f_1$ and $f_2$, we get that $f_{1}(\mathcal{C}^{1}_{D})=f_{2}(\mathcal{C}^{2}_{D})$. Thus this statement is proved 
by combining the arguments of Proposition~\ref{corresponding} (1) and (2).
\end{proof}

Therefore, we are ready to define the functor from $\mathcal{C}_{D}$ to ${\rm ind}\ Q_{D}$.

\begin{definition}\label{equivalence}
Let $F_{D}:\mathcal{C}_{D}\to {\rm ind}\ Q_{D}$ be the functor defined on objects by
\[
F_{D}(\gamma_{s}^{t,l})= \begin{cases}
\psi_{1}(F_{A}(f_{1}(\gamma_{s}^{t,l})))     &{\rm if} \ \gamma_{s}^{t,l}\in\Omega_{1};\\
\psi_{2}(F_{A}(f_{2}(\gamma_{-t}^{t,-1})))   & {\rm otherwise},\\
\end{cases}
\]
and on pivot $\gamma_{s}^{t,l}\to \gamma_{u}^{v,l'}$  
by irreducible morphism $F_{D}(\gamma_{s}^{t,l})\to F_{D}(\gamma_{u}^{v,l'}).$ 
\end{definition}
 
Now we are going to  show that ${\rm ind}\ Q_{D}$ 
   is equivalent to $\mathcal{C}_{D}$.

\begin{theorem}\label{Thm:A}
The functor $ F_{D}:\mathcal{C}_{D}\to {\rm ind}\ Q_{D}$ is an equivalence of categories. 
Particularly,
 \begin{enumerate}
\item[(1)]   $F_{D}$ induces bijections
 \[
 \{ \textup{tagged line segments in $\Omega$}\}\to{\rm ind}\  Q_{D};\]
\[\{ \textup{pivots  in $P(Q_{D})$}\} \to \{\textup{irreducible morphisms in $ {\rm ind}\ Q_{D}$}\};
\]
\item[(2)]   $R_{D}$ corresponds to the Auslander-Reiten translation $\tau$ in the following sense
\[F_{D}\circ R_{D}= \tau \circ F_{D};\]
\item[(3)]  $F_{D}$ induces an isomorphism of translation quivers 
 $\left(\Gamma_{D}, R_{D} \right) \to \left(\Gamma_{\mathcal{D}}, \tau \right).$
\end{enumerate}
\end{theorem}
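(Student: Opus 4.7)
The plan is to deduce Theorem~\ref{Thm:A} from Theorem~\ref{thm:FA} by transporting the type~$\mathbb{A}$ equivalence through the $G$-equivariant picture of Section~\ref{sec.3}. By construction, $F_D$ is the gluing of the two compositions $\psi_i\circ F_A\circ f_i$ $(i=1,2)$ defined on $\mathcal{C}^i_D$, and these agree on non-diameter segments; the two tagged diameters $\gamma_{-t}^{t,\pm 1}$ separate the remaining cases. The proof therefore reduces to showing that the gluing is well-defined and that each ingredient preserves the relevant structure.

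\textbf{Step 1 (Object bijection).} I would first verify $|\Omega|=n(n-1)$: modulo $\sim$ there are $(n-1)(n-2)$ non-diameter classes (the convention $[s,t]\le n$ selects unique representatives), and adjoining the $2(n-1)$ tagged diameters gives exactly the number $n(n-1)$ of positive roots---hence indecomposable representations---of $D_n$. Surjectivity of $F_D$ on objects then follows from Proposition~\ref{corresponding}: for $\tau^{-k}\overline{P}_i$ with $i\le n-2$, part~(2) produces a non-diameter preimage; for $i\in\{n-1,n\}$, part~(1) identifies the preimage as a diameter whose tag is dictated by the parity rules in Definition~\ref{P1,P2} and its counterpart for $\psi_2$. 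Well-definedness on the relation $\sim$ is exactly Proposition~\ref{corresponding}(3) combined with the fact that $\psi_i$ identifies $\tau^{-k}P_i$ and $\tau^{-k}P_{2n-2-i}$ for $i\le n-2$, yielding the bijection on objects.

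\textbf{Step 2 (Arrow bijection, translation compatibility, and equivalence).} Since both $\mathcal{C}_D$ and ${\rm ind}\,Q_D$ are mesh categories (the latter because $Q_D$ is Dynkin), it suffices to establish an isomorphism of translation quivers $(\Gamma_D,R_D)\cong (\Gamma_{\mathcal{D}},\tau)$. For the underlying quivers, the three cases of Definition~\ref{pivot} match the arrows of the AR quiver of $\Bbbk Q_D$: Cases~I and~II factor through $f_i$ and ordinary pivots in $P(Q_A)$ and thus produce irreducible morphisms via Theorem~\ref{thm:FA} and $\psi_i$, while Case~III (pivots out of a diameter) and the \emph{double} pivot appearing in Case~II when $[s,t]=n-1$ and $t>0$ correspond to the branching of the AR quiver at the trivalent vertex $n-2$ of $Q_D$. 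The identity $F_D\circ R_D=\tau\circ F_D$ reduces on non-diameters to the translation compatibility of $F_A$ implicit in Theorem~\ref{thm:FA}, and on diameters to the observation that $R_D$ flips the tag while $\tau$ swaps $\tau^{-k}\overline{P}_{n-1}$ and $\tau^{-k}\overline{P}_n$ alternately in $k$, matching the parity built into $\psi_1,\psi_2$. This gives items~(1) and~(2); item~(3) then follows since a bijection on vertices and arrows that commutes with the translation automatically sends mesh relations to mesh relations, extending $F_D$ to an equivalence of mesh categories.

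\textbf{Main obstacle.} The principal technical difficulty is the coherent treatment of the two diameter tags. Because $\psi_1$ and $\psi_2$ send $\tau^{-k}P_{n-1}$ to $\tau^{-k}\overline{P}_{n-1}$ and $\tau^{-k}\overline{P}_n$ in \emph{opposite} parities of $k$, at each $k$ exactly one of the two branches of $Q_D$ is produced by each $\psi_i\circ F_A\circ f_i$. Verifying that the gluing on $\mathcal{C}^1_D\cap \mathcal{C}^2_D$ is compatible, and that pivots connecting non-diameters with diameters (Cases~II and~III of Definition~\ref{pivot}) correctly distribute over the two irreducible morphisms emanating from the trivalent vertex of $Q_D$, requires a careful case analysis in which the equivariant viewpoint of Section~\ref{sec.3} (in particular the functor $\Phi$) plays the decisive role.
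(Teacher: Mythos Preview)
Your proposal is correct and follows essentially the same route as the paper: both deduce the result from Theorem~\ref{thm:FA} by transporting through the functors $f_i$ and $\psi_i$, with the two tagged diameters handled by the parity convention built into $\psi_1,\psi_2$. The one noteworthy difference is in Step~2: you argue $F_D\circ R_D=\tau\circ F_D$ by directly inheriting the translation compatibility of $F_A$ (plus the tag/parity match on diameters), whereas the paper instead first establishes the arrow bijection and then \emph{derives} the translation identity from Auslander--Reiten theory, by checking that every irreducible morphism ending at $F_D(R_D^{-1}\gamma)$ factors through some $F_D(\gamma')$ with $\gamma\to\gamma'$ a pivot, forcing $\tau^{-1}F_D(\gamma)=F_D(R_D^{-1}\gamma)$. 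Your route is slightly more direct but requires you to know that $F_A$ intertwines $R_A$ with $\tau$ (which is implicit in Theorem~\ref{thm:FA} but not stated there); the paper's route avoids this by reconstructing $\tau$ from the arrow bijection alone, at the cost of a small case analysis.
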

\begin{proof}
 According to the definition of $F_{D}$, it is easy to check that $F_{D}$ is a bijection between the objects of the categories $\mathcal{C}_{D}$ and ${\rm ind}\ Q_{D}$,
and thus a bijection between the vertices of the quivers  $\Gamma_{D}$ and $\Gamma_{\mathcal{D}}$.

Now we consider the relationship between the pivots in $P(Q_{D})$ and irreducible morphisms in ${\rm ind}\ Q_{D}$. 
 By the definition of $f_1$ and $f_2$, $\gamma_2$ is a  pivot of $\gamma_1$ in $P(Q_{D})$ if and only if $f_1(\gamma_2)$ is a pivot of $f_1(\gamma_1)$ or  $f_2(\gamma_2)$ is a pivot of  $f_2(\gamma_1)$ in $P(Q_{A})$. 
 In addition, $\overline{M}\to \overline{N}$ is an irreducible morphism  in $ {\rm ind}\ Q_{D}$
 if and only if either $\psi_{1}^{-1}(\overline{M})\to \psi_{1}^{-1}(\overline{N})$ or $\psi_{2}^{-1}(\overline{M})\to \psi_{2}^{-1}(\overline{N})$
  is an irreducible morphism  in $\mathcal{A}^{'}$,  since $(\Bbbk Q_{A_{2n-3}})G$ is Morita equivalent to $\Bbbk Q_{D}$. 
Therefore, we know from Theorem~\ref{thm:FA} and Proposition~\ref{corresponding} that $F_{D}$ is a bijection between the pivots in $P(Q_{D})$ and irreducible morphisms in $ {\rm ind}\ Q_{D}$. This completes the proof of  statement  (1).

Furthermore, we consider the correspondence between the  Auslander-Reiten translation $\tau$ on $\mathcal{D}$ and the translation  $R_{D}$. We only discuss the case $\gamma_{s}^{t}$ with $2\leq[s,t]\leq n-2$, the others are similar. In this case,  $R_{D}^{-1}(\gamma_{s}^{t})$ is either $0$ or  $\gamma_{u}^{v}\in \Omega$, where $Y_{u}$ (resp. $Y_{v}$) is the counterclockwise neighbor of $Y_{s}$ (resp. $Y_{t}$). 
For $R_{D}^{-1}(\gamma_{s}^{t})=\gamma_{u}^{v}$, we consider the irreducible morphisms $h:\overline{M}\to F_{D}(\gamma_{u}^{v})$ ending at $F_{D}(\gamma_{u}^{v})$. 
Then there exists  $\gamma\in \Omega$ such that $\overline{M}=F_{D}(\gamma)$ since $F_{D}$ is a bijection on objects, which implies that $\gamma\to\gamma_{u}^{v}$ is an arrow in $\Gamma_{D}$. Since $2\leq[u,v]\leq n-2$, by Definition~\ref{pivot}, we get that $\gamma$ is 
either $\gamma_{s}^{v}$ or $\gamma_{u}^{t}$.
If $\gamma=\gamma_{s}^{v}$, then $\gamma_{s}^{t}\to\gamma_{s}^{v}$ is also an arrow in $\Gamma_{D}$(c.f. Figure~\ref{translation}). 
\begin{figure}[h]
\centering
\begin{tikzpicture}[scale=0.7]
\coordinate (B) at (0.5,0);
\fill (B) circle[radius=2pt];
\node (1) at (0,0) {\small $\gamma_{s}^{t}$};
\coordinate (A) at (2,0.8);
\fill (A) circle[radius=2pt];
\node (2) at (2,1.3) {\small$\gamma_{s}^{v}$};
\coordinate (C) at (3.5,0);
\fill (C) circle[radius=2pt];
\node (3) at (4,0) {\small$\gamma_{u}^{v}$};
\draw[->] (0.5+0.2,0.08) -- (1.8,0.72);
\draw[->] (2.2,0.72)--(3.8-0.5,0.05);
\end{tikzpicture}
\hspace{1cm}
\begin{tikzpicture}[scale=0.7]
\coordinate (B) at (0.5,0);
\fill (B) circle[radius=2pt];
\node (1) at (0,0) {\small$\gamma_{s}^{t}$};
\coordinate (A) at (2,-0.8);
\fill (A) circle[radius=2pt];
\node (2) at (2,-1.3) {\small$\gamma_{u}^{t}$};
\coordinate (C) at (3.5,0);
\fill (C) circle[radius=2pt];
\node (3) at (4.0,0) {\small$\gamma_{u}^{v}$};
\draw[->] (0.5+0.2,-0.08) -- (1.8,-0.72);
\draw[->] (2.2,-0.72)--(3.8-0.5,-0.05);
\end{tikzpicture}
\caption{Local graph of quiver $\Gamma_ {D}$}
\label{translation}
\end{figure}
Applying  the functor $F_{D}$ gives an irreducible morphism from $F_{D}(\gamma_{s}^{t})$ to $F_{D}(\gamma_{s}^{v})$.
 If $\gamma=\gamma_{u}^{t}$, then it is analogous to prove that there is an irreducible morphism from $F_{D}(\gamma_{s}^{t})$ to $F_{D}(\gamma_{u}^{t})$.
 Thus, 
according to the definition of Auslander-Reiten translation  $\tau$,  we have \[\tau^{-1}F_{D}(\gamma_{s}^{t})=F_{D}(\gamma_{u}^{v})=F_{D}R_{D}^{-1}(\gamma_{s}^{t}).\] 
For $R_{D}^{-1}(\gamma_{s}^{t})=0$, it follows that  $\tau^{-1}F_{D}(\gamma_{s}^{t})=0$. 
Otherwise, 
by similarly combining the arguments from statement (1) with the  definition of translation quivers, we can deduce that $$ F_{D}R_{D}^{-1}(\gamma_{s}^{t})=\tau^{-1}F_{D}(\gamma_{s}^{t})\ne 0,$$ which contradicts the given condition.
 Therefore, statement (2) and (3) hold. 
\end{proof}
Consequently, we provide a geometric realization of the category ${\rm ind}\ Q_{D}$ via $\mathcal{C}_{D}$. Next, we present an application of this geometric realization: using the equivalent functor $F_{D}$, we can easily  compute   the dimension vectors of all indecomposable $\Bbbk Q_{D}$-modules.
This will provide a great convenience for  illustrating the Auslander-Reiten quiver of $\Bbbk Q_{D}$.

\begin{theorem}\label{dim of ind}
For any  tagged line segment in  $\Omega$, there exist
\begin{equation}  \label{22.2}
\mathbf{dim}\ F_{D}(\gamma_{-t}^{t,l})=\begin{cases}
\sum\limits_{i=n-t}\limits^{n-1}{\mathbf{dim}\ \overline{S}_{i}}, &{\rm if} \  l=1,\\
\sum\limits_{i=n-t}\limits^{n-2}{\mathbf{dim}\ \overline{S}_{i}}+\mathbf{dim}\ \overline{S}_{n}, &{\rm if} \ l=-1; \\
\end{cases}
\end{equation} 
\begin{equation}  \label{22.3}
\mathbf{dim}\ F_{D}(\gamma_{s}^{t})=\begin{cases}
\sum\limits_{i=n+s}\limits^{n+t-1}{\mathbf{dim}\ \overline{S}_{i}}, & {\rm if}\ s<t<0;\\
\sum\limits_{i=n+s}\limits^{n}{\mathbf{dim}\ \overline{S}_{i}}+ \sum\limits_{i=n-t}\limits^{n-2}{\mathbf{dim}\ \overline{S}_{i}}, & {\rm if}\ s<0<t;\\
\sum\limits_{i=n-t}\limits^{n-s-1}{\mathbf{dim}\ \overline{S}_{i}}, & {\rm if}\ 0<s<t,\\
\end{cases}
\end{equation} 
where  $\mathbf{dim}$  denotes  the dimension vector and $\overline{S}_{i}$ is the simple module supported on  $i\in Q_{D,0}$.
\end{theorem}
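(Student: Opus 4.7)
My plan is to compute each dimension vector by pushing it through the factorization
\[
F_D \;=\; \psi_i \circ F_A \circ f_i
\]
of Definition~\ref{equivalence}, so that the $\Bbbk Q_D$-dimension vector of $F_D(\gamma_s^{t,l})$ is obtained from the $\Bbbk Q_A$-dimension vector of $F_A(f_i(\gamma_s^{t,l})) = M(s'+1,t')$ by the folding rules in Remark~\ref{3.2}(2). The preliminary bookkeeping is to make the relabeling $(*)$ numerical: $s' = s+n-1$ if $y_{s'}<0$ and $s' = s+n-2$ if $y_{s'}>0$, and likewise for $t'$, so that the support $\{s'+1,\ldots,t'\}$ of $M(s'+1,t')$ is an explicit interval of $Q_{A,0}$ for every tagged line segment.

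I would then dispatch into cases according to the statement. For the tagged segments $\gamma_{-t}^{t,l}$ (case $s=-t$), Proposition~\ref{corresponding}(1) identifies $F_A(f_i(\gamma_{-t}^{t,l}))$ with $\tau^{-k}P_{n-1}$ for the value of $k$ forced by $t$. Then $\psi_1$, respectively $\psi_2$, selects the summand $\tau^{-k}\overline{P}_{n-1}$ or $\tau^{-k}\overline{P}_{n}$ according to the parity of $k$ and the choice of tag, and Remark~\ref{3.2}(2) gives the two dimension vectors: one is $\sum_{i=n-t}^{n-1}\mathbf{dim}\,\overline{S}_i$, the other differs only by exchanging $\overline{S}_{n-1}\leftrightarrow \overline{S}_n$. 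Matching $\psi_1$ with $l=1$ and $\psi_2$ with $l=-1$ recovers (\ref{22.2}) directly.

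For the untagged segments $\gamma_s^t$ (case $s\neq -t$, normalised to $[s,t]\le n-1$), Proposition~\ref{corresponding} identifies $M(s'+1,t')$ as $\tau^{-k}P_i$ for a unique $i\in\{1,\ldots,n-2\}$, and the folding relations of Remark~\ref{3.2}(2),
\[
\dim_{\Bbbk}\overline{e}_j(\tau^{-k}\overline{P}_i) = \dim_{\Bbbk} e_j(\tau^{-k}P_i) + \dim_{\Bbbk} e_{2n-2-j}(\tau^{-k}P_i)\quad(j\le n-2),
\]
\[
\dim_{\Bbbk}\overline{e}_{n-1}(\tau^{-k}\overline{P}_i) = \dim_{\Bbbk} e_{n-1}(\tau^{-k}P_i) = \dim_{\Bbbk}\overline{e}_n(\tau^{-k}\overline{P}_i),
\]
convert the interval support of $M(s'+1,t')$ into the stated $Q_D$-indices. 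When $\gamma_s^t$ lies entirely below the horizontal diameter ($s<t<0$), $M(s'+1,t')$ is supported inside $\{1,\ldots,n-2\}$, so the $2n-2-j$ summands do not contribute and the dimension vector is a shift of the $\mathbb{A}$-support, producing $\sum_{i=n+s}^{n+t-1}\mathbf{dim}\,\overline{S}_i$; the symmetric case $0<s<t$ follows from $\gamma_s^t\sim\gamma_{-t}^{-s}$. When $\gamma_s^t$ straddles the diameter ($s<0<t$), the support of $M(s'+1,t')$ contains the branch vertex $n-1$ of $Q_A$, so both $\overline{e}_{n-1}$ and $\overline{e}_n$ receive a contribution; splitting the $\mathbb{A}$-support above and below $n-1$ and folding yields precisely $\sum_{i=n+s}^{n}\mathbf{dim}\,\overline{S}_i + \sum_{i=n-t}^{n-2}\mathbf{dim}\,\overline{S}_i$.

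The only real obstacle is bookkeeping: tracking the unit shift in $(*)$ between the two halves of $P(Q_A)$, converting forward summations in $Q_A$ into the correct forward or reverse summations in $Q_D$ (this is where the symmetry $\gamma_s^t\sim\gamma_{-t}^{-s}$ must be invoked to restore the normalised representative), and correctly identifying which of $\overline{S}_{n-1},\overline{S}_n$ appears (or both, via the double contribution from $e_{n-1}$) in each subcase. Once this translation table between $X_{h'}$ and $Y_h$ is fixed, each of the five subcases of (\ref{22.2})--(\ref{22.3}) is a direct substitution requiring no further structural input.
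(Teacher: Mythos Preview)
Your proposal is correct and follows essentially the same approach as the paper: both compute $\mathbf{dim}\,F_D(\gamma_s^{t,l})$ by pushing through the factorization $F_D=\psi_i\circ F_A\circ f_i$, using the relabeling rule $(*)$ to identify $f_i(\gamma_s^{t,l})=\gamma(s',t')$, then $F_A$ to get $M(s'+1,t')$, and finally the folding formulas of Remark~\ref{3.2}(2) to read off the $Q_D$-dimension vector. The paper carries out only the subcase $s<t<0$ explicitly and declares the remaining cases similar, whereas you sketch all five subcases; your extra observation that the case $0<s<t$ reduces to $s<t<0$ via $\gamma_s^t\sim\gamma_{-t}^{-s}$ is a small but legitimate shortcut not made explicit in the paper.
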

\begin{proof}
We only discuss the case $s<t<0$, the other cases are similar. In this case, the vertical coordinates of the vertices $Y_s$ and $Y_t$  are both negative.
 Therefore, by rule $  (*)$ and the definition of functor $f_{1}$, it follows that
$$f_{1}(\gamma_{s}^{t})=\gamma(s+n-1,t+n-1).$$
Furthermore, according to the definition of $F_{A}$, we obtain
$$F_{A}f_{1}(\gamma_{s}^{t})=M(s+n,t+n-1).
$$  Follows from Remark~\ref{3.2}, we immediately get that  
\[\mathbf{dim}\ F_{D}(\gamma_{s}^{t})=\mathbf{dim}\ \psi_{1}(M(s+n,t+n-1))=\sum\limits_{i=s+n}\limits^{n+t-1}{\mathbf{dim}\ \overline{S}_{i}}.
\]
Therefore, the statement holds.
\end{proof}
\begin{remark}
The formula (\ref{22.2}) and (\ref{22.3}) of Theorem \ref{dim of ind} may seem complex. Essentially, we  identify the tagged line segments corresponding to the simple modules, and then derive these two formulas through ``vector addition".
\end{remark}

\section{The geometric interpretation of  the  extension group in $\Bbbk Q_{D}\text{-}{\rm mod}$} \label{sec.5}
In this  section,   
we want to determine the dimension of extension group  and short exact sequence between two indecomposable representations over  $Q_{D}$ via the tagged line segments of $P(Q_{D})$. Since $\mathcal{D}$ is a hereditary category, all the $i$-th extension groups, with $i\geq 2$, vanish. Through Serre duality, 
the dimension of  the $0$-th extension groups can be determined by the corresponding  first extension groups. Hence, we only consider the first extension groups.

\begin{definition}\label{positive}
Let $\gamma_1$ and $\gamma_2$ be two tagged line segments  of $P(Q_{D})$.
A common point of    $\gamma_1$ and $\gamma_2$   is called  a
\emph{positive intersection} of $\gamma_{1}$ and $\gamma_{2}$, if it is neither a common starting point nor a common ending point of  $\gamma_1$ and $\gamma_2$,  and the slope of $(\gamma_2)^{\perp}$   is less than that of $(\gamma_1)^{\perp}$, where $\gamma^{\perp}$ denotes the line perpendicular to $\gamma$.
Denote by ${\rm Int}^{+}( \gamma_{1}, \gamma_{2})$ the number of the positive intersections of  $ \gamma_{1}$ and $\gamma_{2}$. 
\begin{figure}[h]
\centering
\begin{tikzpicture}[scale=0.8]
\coordinate (2) at (-1.2,1);
\coordinate (5) at (2.9,1);
\coordinate (7) at (0.1-0.5,0);
\coordinate (9) at (2.6-0.5,0);
\node[right] at (2){\small $\gamma_{1}$};
\node[left]  at (5){\small $\gamma_{2}$};
\draw [->,thick,red](0.1,-0.5) --(2.1,1);
\draw [->,thick,orange] (2.6,-0.5) --(-0.4,1);
\node  at (1,-1){$(a)$};
\end{tikzpicture}
\hspace{0.5cm}
\begin{tikzpicture}[scale=0.8]
\coordinate (2) at (-1.1,1);
\coordinate (5) at (2.0,-0.1);
\coordinate (7) at (0.1-0.5,0);
\coordinate (9) at (2.6-0.5,0);
\node[right] at (2){\small $\gamma_{1}$};
\node  at (5){\small $\gamma_{2}$};
\draw [->,thick,red](-0.4+1.5,-0.5) --(2.6-0.5,0.5);
\draw [->,thick,orange] (2.6-0.5,0.5) --(0.1-0.5,1);
\node  at (1,-1){$(b)$};
\end{tikzpicture}
\hspace{1cm}
\begin{tikzpicture}[scale=0.8]
\coordinate (2) at (1.4,2+0.4-0.5);
\coordinate (5) at (-0.3,0.1 );
\node[below right] at (2){$\gamma_{2}$};
\node at (5){\small $\gamma_{1}$};
\draw [->,thick,red](0.1-0.5,1-0.2) --(1.4,2.3-0.3-0.4);
\draw [->,thick,orange] (0.4,0.5-0.7) --(0.1-0.5,1-0.2);
\node  at (0.6,-0.5){$(c)$};
\end{tikzpicture}
\caption{Positive intersection  of  $\gamma_{1}$ and $\gamma_{2}$}
\label{position}
\end{figure}
 \end{definition}

\begin{remark}\label{4.20}
The number of positive intersections between two tagged line segments is determined  by their position on $P(Q_{D})$, regardless of whether they contain the tag $``|"$.
\end{remark}

Next, we will explore the relationship between  the positive intersections and the non-split short exact sequences   in the category $\mathcal{D}$.  To this end, we require the following lemmas.

\begin{lemma}\label{2+2}
 Let $\eta_{1}$ and $\eta_{2}$ be two short exact sequences of the   following forms:
\[\begin{tikzcd}[ampersand replacement=\&]
\eta_{1}: 0 \arrow[r] \& M_{1} \xrightarrow{{\left(\begin{smallmatrix} \iota_{1}\\ \iota_{2} \end{smallmatrix}\right) }} F \oplus M_{2} \xrightarrow{{\left(\begin{smallmatrix}\nu_{1} & \nu_{2}\end{smallmatrix}\right) }} M_{3}  \arrow[r] \& 0
\end{tikzcd}  
\]
and
\[\eta_{2}: \begin{tikzcd}[ampersand replacement=\&]
0 \arrow[r] \& M_{2}  \xrightarrow{{\left(\begin{smallmatrix} \nu_{2}\\\nu_{3} \end{smallmatrix}\right) }} M_{3} \oplus M_{4} \xrightarrow{{\left(\begin{smallmatrix} \mu_{1} & \mu_{2} \end{smallmatrix}\right) }} M_{5} \arrow[r] \& 0,
\end{tikzcd}
\]  where $F$, $M_i$ are  representations over  $Q$ and  $M_i$ are indecomposable for  all $i\in\{1,2,\dots,5\}$. Then
\[
\begin{tikzcd}[ampersand replacement=\&]
0 \arrow[r] \& M_{1}  \xrightarrow{{\left(\begin{smallmatrix}\iota_{1}\\\nu_{3}\iota_{2} \end{smallmatrix}\right) }}  F \oplus M_{4} \xrightarrow{{\left(\begin{smallmatrix}
\mu_{1}\nu_{1}\  -\mu_{2}\end{smallmatrix}\right) }} M_{5}  \arrow[r] \& 0
\end{tikzcd}
\]
is also a short exact sequence (refer to the right side of Figure \ref{fig:n-1}).
\end{lemma}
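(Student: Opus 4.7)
My plan is to prove the lemma by a direct diagram chase, exploiting the fact that the two given sequences share the map $\nu_{2}\colon M_{2}\to M_{3}$, which occurs as the second component of the right map in $\eta_{1}$ and as the first component of the left map in $\eta_{2}$. The proposed sequence is then the Yoneda-type splicing of $\eta_{1}$ and $\eta_{2}$ obtained by a pullback along $\nu_{3}$. I will verify the three required conditions in turn: that the composition of the two new maps vanishes, that $\binom{\iota_{1}}{\nu_{3}\iota_{2}}$ is injective, and that the sequence is exact in the middle (surjectivity of $(\mu_{1}\nu_{1},\,-\mu_{2})$ being established simultaneously).

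First I would record the two identities implied by the exactness of $\eta_{1}$ and $\eta_{2}$, namely $\nu_{1}\iota_{1}+\nu_{2}\iota_{2}=0$ and $\mu_{1}\nu_{2}+\mu_{2}\nu_{3}=0$. Composing the proposed maps then gives $\mu_{1}\nu_{1}\iota_{1}-\mu_{2}\nu_{3}\iota_{2}=-\mu_{1}\nu_{2}\iota_{2}-\mu_{2}\nu_{3}\iota_{2}=-(\mu_{1}\nu_{2}+\mu_{2}\nu_{3})\iota_{2}=0$, which explains the sign in $-\mu_{2}$.

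Next, for injectivity, suppose $\iota_{1}(x)=0$ and $\nu_{3}\iota_{2}(x)=0$. The first identity forces $\nu_{2}\iota_{2}(x)=0$, so $\binom{\nu_{2}}{\nu_{3}}(\iota_{2}(x))=0$; injectivity of the left map of $\eta_{2}$ gives $\iota_{2}(x)=0$, and then injectivity of the left map of $\eta_{1}$ gives $x=0$. For middle exactness, given $(f,m_{4})\in F\oplus M_{4}$ with $\mu_{1}\nu_{1}(f)=\mu_{2}(m_{4})$, the pair $(\nu_{1}(f),-m_{4})$ lies in the kernel of $(\mu_{1},\mu_{2})$, so by exactness of $\eta_{2}$ there is $m_{2}\in M_{2}$ with $\nu_{2}(m_{2})=\nu_{1}(f)$ and $\nu_{3}(m_{2})=-m_{4}$. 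Then $(f,-m_{2})$ lies in the kernel of $(\nu_{1},\nu_{2})$, so by exactness of $\eta_{1}$ there is $m_{1}\in M_{1}$ with $\iota_{1}(m_{1})=f$ and $\iota_{2}(m_{1})=-m_{2}$, whence $\nu_{3}\iota_{2}(m_{1})=m_{4}$. Surjectivity of $(\mu_{1}\nu_{1},-\mu_{2})$ is similar: pull an element of $M_{5}$ back through $\eta_{2}$, then through $\eta_{1}$, and collect terms.

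None of the steps is deep; the genuine obstacle is purely bookkeeping, namely threading the signs (the occurrence of $-\mu_{2}$ and the sign $-m_{2}$ that appears after lifting through $\eta_{2}$) consistently through the chase. Since the ambient category is $\mathrm{rep}_{\Bbbk}Q$, an abelian category, all arguments are elementwise on the underlying vector spaces at each vertex, so no further categorical machinery is required.
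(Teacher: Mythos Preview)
Your proof is correct. The direct elementwise chase works exactly as you describe: the vanishing of the composite follows from the two relations $\nu_{1}\iota_{1}+\nu_{2}\iota_{2}=0$ and $\mu_{1}\nu_{2}+\mu_{2}\nu_{3}=0$, injectivity and middle exactness come from lifting first through $\eta_{2}$ and then through $\eta_{1}$, and surjectivity is handled by the same two-step lift in reverse order. All steps are valid in $\mathrm{rep}_{\Bbbk}Q$.

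Your approach is genuinely different from the paper's. The paper does not chase elements; instead it first inflates $\eta_{2}$ by the split summand $F$ to obtain the exact sequence
\[
0 \longrightarrow F\oplus M_{2} \longrightarrow M_{3}\oplus F\oplus M_{4} \longrightarrow M_{5} \longrightarrow 0,
\]
and then exhibits a commutative ladder with $\eta_{1}$ on top, the desired sequence on the bottom, identity on $M_{1}$, the map $\left(\begin{smallmatrix}1&0\\0&\nu_{3}\end{smallmatrix}\right)$ in the middle, and $\mu_{1}$ on the right. The right square is identified as a pullback, after which exactness of the bottom row is read off from \cite[Chapter~A, Proposition~5.3(a)]{ASS}. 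The paper's argument is shorter and invokes a standard reference, while your argument is entirely self-contained and makes explicit the sign bookkeeping that the categorical argument hides. Either proof is perfectly acceptable here; yours has the advantage of not depending on an external citation and of making transparent why the sign $-\mu_{2}$ is forced.
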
 
\begin{proof}
Since   $\eta_{2}$ is a short exact sequence,  the sequence
\[\begin{tikzcd}[ampersand replacement=\&]
0 \arrow[r] \& F\oplus M_{2}  \xrightarrow{{\left(\begin{smallmatrix} \nu_{1}& \nu_{2}\\1 & 0\\0 & \nu_{3} \end{smallmatrix}\right) }}  M_{3} \oplus F\oplus  M_{4} \xrightarrow{{\left(\begin{smallmatrix} \mu_{1} & -\mu_{1}\nu_{1} & \mu_{2} \end{smallmatrix}\right) }}  M_{5} \arrow[r] \& 0
\end{tikzcd}
\] is also   exact.
 Consequently, we obtain   the following commutative diagram  with the right-hand square being a pullback:
\[ \begin{tikzcd}[ampersand replacement=\&,column sep=huge]
0 \arrow[r] \& M_{1} \arrow[r, "{{\left(\begin{smallmatrix}\iota_{1}\\\iota_{2} \end{smallmatrix}\right) }} "]  \arrow[d,equal] \&  F\oplus M_{2} \arrow[r, "{{\left(\begin{smallmatrix}\nu_{1} & \nu_{2}\end{smallmatrix}\right) }}"] \arrow[d,"{ {\left(\begin{smallmatrix}1 & 0 \\ 0 &\nu_{3} \end{smallmatrix}\right) }}"] \&M_{3} \arrow[r]\arrow[d,"\mu_{1}"]  \& 0\\
0 \arrow[r] \& M_{1}  \arrow[r, "{{\left(\begin{smallmatrix}\iota_{1}\\\nu_{3}\iota_{2} \end{smallmatrix}\right) }} "] \&  F\oplus M_{4} \arrow[r, "{{\left(\begin{smallmatrix}
\mu_{1}\nu_{1}\  -\mu_{2}\end{smallmatrix}\right) }}"]  \&M_{5} \arrow[r] \& 0
\end{tikzcd}
\] 
The conclusion follows from \cite[Chapter A, Proposition~5.3(a)]{ASS}.
\end{proof}
\begin{figure} 
\centering
\begin{tikzpicture}[scale=0.7,rotate=0]
\coordinate (B) at (0,0);
\fill[orange] (B) circle[radius=2pt];
\node (1) at (-.4,0) {\small{$M_1$}};
\coordinate (A) at (3,1);
\fill [blue](A) circle[radius=2pt];
\node (2) at (3.5,0.7) {\small{$M_3$}};
\coordinate (C) at (2,2);
\fill[blue] (C) circle[radius=2pt];
\node (C) at (1.4,2) {\small{$M_2$}};
\coordinate (D) at (4,2);
\fill[blue] (D) circle[radius=2pt];
\node (D) at (4.6,2) {\small{$M_5$}};
 
\draw[orange,thick] (0,0) -- (3,3);
\draw [orange,thick](0,0) -- (1,-1);
\draw [thick,orange](3,1) -- (1,-1);
\draw [thick,blue](3,1) -- (2,2)--(3,3);
\draw [thick,blue](3,1) -- (4,2)--(3,3);
\coordinate (C) at (3,3);
\fill[blue] (C) circle[radius=2pt];

\node (3) at (3.2,3.4) {\small{$M_4$}};
\node[below] (4) at (1,-1) {\small{${F}$}};
\fill[orange] (1,-1) circle[radius=2pt];
\end{tikzpicture}
\hspace{0.1cm}
\begin{tikzpicture}[scale=0.7,rotate=0]
\coordinate (B) at (0,0);
\fill[orange] (B) circle[radius=2pt];
\node (1) at (-.4,0) {\small{$N$}};
\coordinate (A) at (3,1);
\fill [blue](A) circle[radius=2pt];
\node (2) at (3.6,0.5) {\small{${F_{g}(E)}$}};
\coordinate (C) at (2,2);
\fill[blue] (C) circle[radius=2pt];
\node (C) at (0.7,2) {\small{${\tau^{-h}P_{n-1}}$}};
\coordinate (D) at (4,2);
\fill[blue] (D) circle[radius=2pt];
\node (D) at (5.2,2) {\small{${\tau^{-k}P_{n-1}}$}};
\draw[orange,thick] (0,0) -- (3,3);
\draw [orange,thick](0,0) -- (1,-1);
\draw [thick,orange](3,1) -- (1,-1);
\draw [thick,blue](3,1) -- (2,2)--(3,3);
\draw [thick,blue](3,1) -- (4,2)--(3,3);
\coordinate (C) at (3,3);
\fill[blue] (C) circle[radius=2pt];

\node (3) at (3.2,3.4) {\small{${E}$}};
\node[below] (4) at (1,-1) {\small{${F}$}};
\fill[orange] (1,-1) circle[radius=2pt];
\end{tikzpicture}
\caption{The  short exact sequences in Lemma~\ref{2+2} (left) and 
Lemma~\ref{lem:5.7} (right)}
\label{fig:n-1}
\end{figure}

\begin{remark}\label{2+1}
In Lemma \ref{2+2}, the representation $F$ could be   decomposable, indecomposable or  zero.
\end{remark}

\begin{lemma}\label{lem:n-1}
Let $\tau^{-l}P_{n-1}$ and $\tau^{-k}P_{n-1}$ be two representations over $Q_{A}$ with $0\leq l<k\leq n-2$.
 Then there exist   $m\in\{0,1,\dots,n-2\}$ and $r\in\{1,2,\dots,n-2\}$  such that 
\[\begin{tikzcd}
 0\ar[r] & \tau^{-l}P_{n-1} \ar[r] & \tau^{-m}P_{r}\oplus\tau^{-m}P_{2n-2-r}\ar[r] & \tau^{-k}P_{n-1}\ar[r]  & 0
\end{tikzcd}\]
 is a short exact sequence  in the category $\mathcal{A}$.
\end{lemma}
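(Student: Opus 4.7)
My plan is to realize the desired short exact sequence via the geometric model of Theorem~\ref{thm:FA}, identifying the middle term using the central symmetry captured by Proposition~\ref{corresponding}(3).

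By Proposition~\ref{corresponding}(1), the equivalence $F_{A}$ sends $\tau^{-l}P_{n-1}$ and $\tau^{-k}P_{n-1}$ to two distinct ``central'' line segments $\gamma_{l}=\gamma(a, 2n-3-a)$ and $\gamma_{k}=\gamma(b, 2n-3-b)$ of $P(Q_{A})$, i.e., those with $s'+t'=2n-3$. These are precisely the diagonals fixed by the central symmetry $(s', t')\mapsto (2n-3-t', 2n-3-s')$, and each of them passes through the center $O$ of $P(Q_{A})$. Since $l\ne k$, we have $a\ne b$, so the two diagonals cross transversally at $O$.

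The next step is to apply the geometric extension principle underlying the BGMR geometric model (which is the geometric incarnation of Proposition~\ref{2.2}(3)): when two line segments $\gamma_{1},\gamma_{2}\in\omega$ cross in the interior of $P(Q_{A})$, the group ${\rm Ext}_{\mathcal{A}}^{1}(F_{A}(\gamma_{2}), F_{A}(\gamma_{1}))$ is one-dimensional, and the middle term of the non-split extension is the direct sum of the pair of line segments obtained by ``uncrossing'' $\gamma_{1}$ and $\gamma_{2}$. Applied to our two crossing diagonals, the uncrossing produces a pair of non-central line segments that are interchanged by the central symmetry $(s', t')\mapsto (2n-3-t', 2n-3-s')$.

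Finally, since these two uncrossed segments form a single orbit under the central symmetry and neither is central (as $a\ne b$ forces their endpoint sums to differ from $2n-3$), Proposition~\ref{corresponding}(2) and (3) force their images under $F_{A}$ to be of the form $\tau^{-m}P_{r}$ and $\tau^{-m}P_{2n-2-r}$ for suitable $m\in\{0,\ldots,n-2\}$ and $r\in\{1,\ldots,n-2\}$. This yields the asserted short exact sequence. The main obstacle is to make the crossing/uncrossing principle rigorous from the content of the excerpt: Proposition~\ref{2.2}(3) provides the middle term only as the intersection $\Sigma_{\rightarrow}(\tau^{-l}P_{n-1})\cap\Sigma_{\leftarrow}(\tau^{-k}P_{n-1})$, a statement about sectional paths in the Auslander-Reiten quiver, and a combinatorial translation via $F_{A}$ into fixed-endpoint pivot rays in $\Gamma_{A}$ is needed to verify that this intersection consists exactly of the pair of uncrossed segments. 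This step is technical but not conceptually difficult, and amounts to unpacking the dictionary between the two descriptions of the mesh category $\mathcal{C}_{A}$.
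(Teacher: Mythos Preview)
Your argument is correct and reaches the same conclusion, but the paper takes a more direct route that sidesteps the translation step you flag as the ``main obstacle.'' Instead of passing to the polygon model, the paper stays in the module category: it first uses Proposition~\ref{2.2}(1) together with $I_{n-1}=\tau^{-(n-2)}P_{n-1}$ to see that $\Sigma_{\rightarrow}(\tau^{-l}P_{n-1})\cap\Sigma_{\leftarrow}(\tau^{-k}P_{n-1})$ consists of exactly two indecomposables $E_{1},E_{2}$, giving the non-split extension $\xi$ with middle term $E_{1}\oplus E_{2}$. Then it applies the automorphism $F_{g}$ to $\xi$; since $F_{g}$ fixes both outer terms and $F_{g}(\xi)$ is another generator of the same one-dimensional ${\rm Ext}^{1}$, Proposition~\ref{2.2}(3) forces $F_{g}(E_{1})=E_{2}$, and equation~\eqref{F_{g}} immediately gives the form $\tau^{-m}P_{r}\oplus\tau^{-m}P_{2n-2-r}$. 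Your geometric approach is really the same symmetry argument transported through $F_{A}$ (since $F_{g}$ corresponds to the central involution of $P(Q_{A})$ via Proposition~\ref{corresponding}(3)); the paper's version simply avoids the bookkeeping of matching sectional paths with fixed-endpoint pivot rays that you identify as a technical hurdle.
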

\begin{proof}
 Since $\mathscr{R}_{\rightarrow}(P_{n-1})=\mathscr{R}_{\leftarrow}(I_{n-1})$ and $I_{n-1}=\tau^{-(n-2)}P_{n-1}$,  
the sets 
$\Sigma_{\rightarrow}(\tau^{-l}P_{n-1})$ and $\Sigma_{\leftarrow}(\tau^{-k}P_{n-1})$ have two points in common. Assume that $E_{1}$ and $E_{2}$ are  indecomposable representations corresponding to these points, respectively. By Proposition \ref{2.2}(3),  we get that
${\rm Ext}_{\mathcal{A}}^{1}(\tau^{-k}P_{n-1},\tau^{-l}P_{n-1})$ is generated by the following 
  non-split  short exact sequence
$$\begin{tikzcd}
\xi: 0\ar[r] & \tau^{-l}P_{n-1} \ar[r] & E_{1}\oplus E_{2}\ar[r] & \tau^{-k}{P}_{n-1}\ar[r]  & 0.
\end{tikzcd}$$
Applying the automorphism $F_{g}$ to $\xi$, we obtain  a short exact sequence
$$\begin{tikzcd}
F_{g}(\xi): 0\ar[r] & \tau^{-l}P_{n-1} \ar[r] & F_{g}(E_{1})\oplus F_{g}(E_{2}) \ar[r] & \tau^{-k}{P}_{n-1}\ar[r]  & 0.
\end{tikzcd}$$
Again by Proposition \ref{2.2}(3),  $F_{g}(E_{1})= E_{2}$.
Thus the proof is completed by equation \eqref{F_{g}}.
\end{proof}

\begin{lemma}\label{lem:5.7}
Let $\xi$ be a non-split short exact sequence  in the category $\mathcal{A}$.
\begin{itemize}
\item[(1)] If $\xi$ is of the form
\[\begin{tikzcd}
\xi: 0\ar[r] & N \xrightarrow{{\left(\begin{smallmatrix}\iota_{1}\\ \sigma_{1} \end{smallmatrix}\right) }} E \oplus F \ar[r] \xrightarrow{{\left(\begin{smallmatrix}\nu_{1}  & \varsigma_{1} \end{smallmatrix}\right) }} \tau^{-k}P_{n-1}\ar[r]  & 0,
\end{tikzcd}
\]
where  $N$, $E$ and $F$  are indecomposable representations  with  $F_g(E)$  belonging to $ \Sigma_{\leftarrow}(\tau^{-k}P_{n-1})$, then there exists      $h\in \{0,1,\dots,k-1\}$ such that  the following short exact sequence holds:
\[
\begin{tikzcd}[cramped, ampersand replacement=\&]
0 \arrow[r] \& N\oplus F_g(N) \arrow[r] \& \tau^{-h}P_{n-1} \oplus F\oplus F_g(F) \arrow[r] \& \tau^{-k}P_{n-1}  \arrow[r] \& 0.
\end{tikzcd}
\]   
\item[(2)] If $\xi$ is of the form
\[\begin{tikzcd}
\xi: 0\ar[r] & \tau^{-k}P_{n-1} \xrightarrow{{\left(\begin{smallmatrix}\iota_{1}\\ \sigma_{1} \end{smallmatrix}\right) }} E \oplus F \ar[r] \xrightarrow{{\left(\begin{smallmatrix}\nu_{1}  & \varsigma_{1} \end{smallmatrix}\right) }} N\ar[r]  & 0,
\end{tikzcd}
\]
where  $N$, $E$ and $F$  are indecomposable representations with   $F_g(E)$  belonging to $ \Sigma_{\rightarrow}(\tau^{-k}P_{n-1})$, then there exists    $h\in \{k+1,\dots,n-2\}$ such that the following short exact sequence holds:
\[
\begin{tikzcd}[cramped, ampersand replacement=\&]
0 \arrow[r] \& \tau^{-k}P_{n-1}\arrow[r] \& \tau^{-h}P_{n-1} \oplus F\oplus F_g(F) \arrow[r] \&  N\oplus F_g(N)   \arrow[r] \& 0.
\end{tikzcd}
\]  
\end{itemize}
\end{lemma}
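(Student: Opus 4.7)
The plan is to assemble the desired sequence from three ingredients: the given $\xi$, its $F_g$-twist, and an auxiliary extension $\eta$ linking $\tau^{-h}P_{n-1}$ and $\tau^{-k}P_{n-1}$ for the correct choice of $h$. I would first apply the functor $F_g$ to $\xi$: since $F_g$ is exact and fixes $\tau^{-k}P_{n-1}$ by equation~\eqref{F_{g}}, Proposition~\ref{prop:E}(2) yields a second non-split short exact sequence $F_g(\xi)\colon 0\to F_g(N)\to F_g(E)\oplus F_g(F)\to \tau^{-k}P_{n-1}\to 0$. The given hypothesis together with the fact that $F_g$ swaps the two sectional branches of $\Sigma_{\leftarrow}(\tau^{-k}P_{n-1})$ places $E$ and $F_g(E)$ on distinct branches; the unique indecomposable $L$ with $\Sigma_{\rightarrow}(L)\cap \Sigma_{\leftarrow}(\tau^{-k}P_{n-1})=\{E, F_g(E)\}$, whose existence is guaranteed by Proposition~\ref{2.2}(3), must then be $F_g$-fixed, and equation~\eqref{F_{g}} forces $L=\tau^{-h}P_{n-1}$ for some $h\in\{0,\dots,n-2\}$, with $h<k$ since $L\neq \tau^{-k}P_{n-1}$. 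Lemma~\ref{lem:n-1} then produces the auxiliary sequence $\eta\colon 0\to \tau^{-h}P_{n-1}\to E\oplus F_g(E)\to \tau^{-k}P_{n-1}\to 0$.

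Next I would combine $\xi$, $F_g(\xi)$, and $\eta$ via the pullback $P:=(E\oplus F)\times_{\tau^{-k}P_{n-1}}(F_g(E)\oplus F_g(F))$ over their common cokernel. Standard diagram chasing gives the short exact sequence $0\to N\oplus F_g(N)\to P\to \tau^{-k}P_{n-1}\to 0$, so the task reduces to identifying $P\cong \tau^{-h}P_{n-1}\oplus F\oplus F_g(F)$. The natural projection $P\twoheadrightarrow F\oplus F_g(F)$, built from the $F$- and $F_g(F)$-summand projections of $\xi$ and $F_g(\xi)$, is surjective, and its kernel coincides with the kernel of $(\nu_1, -\nu_1')\colon E\oplus F_g(E)\to \tau^{-k}P_{n-1}$, which by $\eta$ is isomorphic to $\tau^{-h}P_{n-1}$. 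This produces a further short exact sequence $0\to \tau^{-h}P_{n-1}\to P\to F\oplus F_g(F)\to 0$.

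The main obstacle is showing that this last sequence splits, which then yields $P\cong \tau^{-h}P_{n-1}\oplus F\oplus F_g(F)$ and hence the desired extension. Since $\mathcal{A}$ is hereditary, by Proposition~\ref{2.2}(3) splitting reduces to verifying $\tau F,\,\tau F_g(F)\notin \mathscr{R}_{\rightarrow}(\tau^{-h}P_{n-1})$. The key observation is that on each sectional branch of $\Sigma_{\leftarrow}(\tau^{-k}P_{n-1})$, the modules $F$ and $F_g(F)$ sit strictly past $E$ and $F_g(E)$ (closer to $\tau^{-k}P_{n-1}$), so their $\tau$-shifts lie outside the slanted rectangle emanating from $\tau^{-h}P_{n-1}$. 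I would verify this by a careful case analysis in the Auslander-Reiten quiver of $\Bbbk Q_A$, or more transparently by transferring the configuration to the polygon model $P(Q_A)$ of Section~\ref{sec.4}, where the required vanishing of ${\rm Ext}^1$ becomes a non-crossing condition on line segments.

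Part~(2) follows by the dual argument: one applies $F_g$ to the given $\xi$, takes the meet in $\Sigma_{\rightarrow}(\tau^{-k}P_{n-1})$ in place of $\Sigma_{\leftarrow}$ (yielding $h\in\{k+1,\dots,n-2\}$), and replaces the pullback by a pushout over $\tau^{-k}P_{n-1}$; the rest of the argument goes through along entirely parallel lines.
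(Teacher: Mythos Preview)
Your approach is correct and takes a genuinely different route from the paper's. Both proofs begin by locating the auxiliary sequence $0\to\tau^{-h}P_{n-1}\to E\oplus F_g(E)\to\tau^{-k}P_{n-1}\to 0$ via Lemma~\ref{lem:n-1}, but then diverge. The paper additionally invokes a second auxiliary extension $\eta\colon 0\to N\to \tau^{-h}P_{n-1}\oplus F\to F_g(E)\to 0$ (read off from the AR-quiver configuration of Figure~\ref{fig:n-1}) together with its $F_g$-image, and packages $\xi$, $F_g(\xi)$, $\eta$, $F_g(\eta)$ into one explicit commutative diagram with matrix entries whose right-hand square is a pullback; the conclusion is then a citation of the standard pullback lemma. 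You instead form the pullback $P$ of $\xi$ and $F_g(\xi)$ over $\tau^{-k}P_{n-1}$ directly, obtain $0\to N\oplus F_g(N)\to P\to\tau^{-k}P_{n-1}\to 0$ at once, and identify $P$ by showing that the induced sequence $0\to\tau^{-h}P_{n-1}\to P\to F\oplus F_g(F)\to 0$ splits. Your route is more conceptual and avoids all the matrix bookkeeping; the paper's is more constructive and yields the maps in the final sequence explicitly.

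One small correction: your heuristic that ``$F$ and $F_g(F)$ sit strictly past $E$ and $F_g(E)$, closer to $\tau^{-k}P_{n-1}$'' is not reliable---on the common sectional branch the relative position of $F$ and $F_g(E)$ depends on where $N$ sits. Nevertheless the vanishing ${\rm Ext}^1_{\mathcal{A}}(F,\tau^{-h}P_{n-1})=0$ you need does hold, and your proposed polygon-model verification works cleanly: writing $N=\gamma(s',t')$ and $\tau^{-k}P_{n-1}=\gamma(p,q)$ with $p+q=2n-3$ and (say) $s'<p<t'<q$, one has $F=\gamma(p,t')$ while $\tau^{-h}P_{n-1}=\gamma(s',2n-3-s')$; since $s'<p<t'<q<2n-3-s'$, both endpoints of $F$ lie strictly between those of $\tau^{-h}P_{n-1}$, so the two diagonals do not cross in the interior, giving the required Ext-vanishing (and the same for $F_g(F)$ by symmetry).
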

\begin{proof}
We only prove  part (1); the proof of part (2) follows similarly.  
If $N$ is in the $\tau$-orbit of $P_{n-1}$, then Lemma~\ref{lem:n-1} implies that  statement(1) is valid. Otherwise,
according to  Lemma~\ref{lem:n-1}, 
  there exists  an integer  $h\in \{0,1,\dots,k-1\}$ such that 
\[
\begin{tikzcd}[ampersand replacement=\&]
0 \arrow[r] \& \tau^{-h}P_{n-1}  \xrightarrow{{\left(\begin{smallmatrix}\mu_{1}\\\mu_{2} \end{smallmatrix}\right) }}E \oplus F_{g}(E) \xrightarrow{{\left(\begin{smallmatrix}
-\nu_{1} & \nu_{2} \end{smallmatrix}\right) } } \tau^{-k}P_{n-1}  \arrow[r] \& 0
\end{tikzcd}
\] and
\[
\eta:\begin{tikzcd}[ampersand replacement=\&]
0 \arrow[r] \& N \xrightarrow{{\left(\begin{smallmatrix}\lambda_{1}\\\sigma_{1} \end{smallmatrix}\right) }} \tau^{-h}P_{n-1} \oplus F  \xrightarrow{{\left(\begin{smallmatrix}
\mu_{2} & \varrho_{1} \end{smallmatrix}\right) } } F_{g}(E)\arrow[r] \& 0
\end{tikzcd}
\]
are short  exact sequences, which are depicted on the left side of  Figure~\ref{fig:n-1}.
Since $F_g$  is an exact functor, the following sequences
\[F_g(\xi):\begin{tikzcd}
  0\ar[r] & F_g(N) \xrightarrow{{\left(\begin{smallmatrix}\iota_{2}\\ \sigma_{2} \end{smallmatrix}\right) }} F_g(E) \oplus F_g(F) \ar[r] \xrightarrow{{\left(\begin{smallmatrix}\nu_{2}  & \varsigma_{2} \end{smallmatrix}\right) }} \tau^{-k}P_{n-1}\ar[r]  & 0,
\end{tikzcd}
\]
and
 \[F_g(\eta):
\begin{tikzcd}[ampersand replacement=\&]
0 \arrow[r] \& F_g(N) \xrightarrow{{\left(\begin{smallmatrix}\lambda_{2}\\\sigma_{2} \end{smallmatrix}\right) }} \tau^{-h}P_{n-1} \oplus F_g(F) \xrightarrow{{\left(\begin{smallmatrix}
\mu_{1} & \varrho_{2} \end{smallmatrix}\right) } } E\arrow[r] \& 0
\end{tikzcd}\]
are exact. Based on Proposition~\ref{2.2}(2) and Lemma~\ref{2+2},   we can assume that   $\mu_1\lambda_{1}=\iota_{1}$, $\mu_2\lambda_{2}=\iota_{2}$, $\nu_2\varrho_{1}=\varsigma_{1}$ and $\nu_1\varrho_{2}=\varsigma_{2}$.
Consequently, 
we   deduce   the following commutative diagram  with the right-hand square being a pullback:
\[ \begin{tikzcd}[ampersand replacement=\&, row sep=3em, column sep=3.2em ]
0 \arrow[r] \&N\oplus F_{g}(N)\   \arrow[r, "\setlength{\arraycolsep}{1pt}{{\left(\begin{smallmatrix} \lambda_{1} & \lambda_{2}\\ \sigma_{1} & 0\\ 0 &\sigma_{2}
 \end{smallmatrix}\right) }} "]  \arrow[d,equal] \&  \tau^{-h}P_{n-1}\oplus F\oplus F_g(F)\ \  \arrow[r, " \setlength{\arraycolsep}{1pt}{{\left(\begin{smallmatrix} \nu_{1}\mu_{1} &   \varsigma_{1} &  &\varsigma_{2}
 \end{smallmatrix}\right) }} " ] \arrow[d,"\setlength{\arraycolsep}{1pt}{{\left(\begin{smallmatrix}\mu_{1}&0&\varrho_{2} \\0&1& 0\\ \mu_{2}&\varrho_{1} &0\\0&0&1\end{smallmatrix}\right) }}"] \&\tau^{-k}P_{n-1}  \arrow[d,"\setlength{\arraycolsep}{1pt} {{\left(\begin{smallmatrix}1 \\ 1 \end{smallmatrix}\right) }}"]  \arrow[r] \& 0\\
0 \arrow[r] \&N\oplus F_{g}(N)\  \arrow[r, "\setlength{\arraycolsep}{1pt}{ {\left(\begin{smallmatrix}\iota_{1} & 0\\ \sigma_{1} & 0\\ 0 & \iota_{2} \\ 0 & \sigma_{2} \end{smallmatrix}\right) } } "] \&  E\oplus F \oplus F_{g}(E)\oplus F_g(F)\ \ \arrow[r,  "\setlength{\arraycolsep}{1pt}{ {\left(\begin{smallmatrix}\nu_{1} & \varsigma_{1} &0&0\\0&0& \nu_{2}&\varsigma_{2} \end{smallmatrix}\right) } }" ] \& \tau^{-k}P_{n-1}^{2} \arrow[r]  \& 0
\end{tikzcd}
\]
 This finishes the proof by \cite[Chapter A, Proposition~5.3(a)]{ASS}.
\end{proof}
 Let $\xi$ be an element in ${\rm Ext}_\mathcal{A}^1(X, Y)$. Then it can be represented by a short exact sequence of the form $  0\rightarrow Y\rightarrow E\rightarrow X\rightarrow 0.$  
The relationship between the first extension groups  on the  category $\mathcal{A}$ and the  category of equivariant objects $\mathcal{A}^G$ was established in  \cite{C1} as follows: for a morphism $p\colon X'\rightarrow X$,  the pullback of $\xi$ along $p$ is denoted by $\xi.p$. Similarly, for a morphism
$p'\colon Y\rightarrow Y'$, the pushout of $\xi$ along $p'$ is denoted by $p'.\xi$. For two objects  $(X, \alpha)$ and $(Y, \beta)$ in $\mathcal{A}^G$,  the space ${\rm Ext}^1_\mathcal{A}(X, Y)$ carries a $\Bbbk$-linear $G$-action {associated} to these two objects 
\[ G\times {\rm Ext}^1_\mathcal{A}(X, Y)\to 
{\rm Ext}^1_\mathcal{A}(X, Y), \ (g,\xi )\mapsto \beta_g^{-1}.F_g(\xi).\alpha_g.
\]
Denote by ${\rm Ext}^1_\mathcal{A}(X, Y)^G$  the invariant subspace of  ${\rm Ext}^1_\mathcal{A}(X, Y)$ with the $\Bbbk$-linear $G$-action. 
\begin{lemma}[{See \cite{C1}}]\label{lem:C1}
Let $(X, \alpha)$ and $(Y, \beta)$  be in $\mathcal{A}^G$. Then the forgetful functor $U$ induces a $\Bbbk$-linear isomorphism
\[U: {\rm Ext}^1_{\mathcal{A}^G} ((X, \alpha), (Y, \beta))\to  {\rm Ext}^1_\mathcal{A}(X, Y)^G.
\]
Indeed, for an extension $\xi\colon 0\rightarrow (Y, \beta)\rightarrow (E, \delta)\rightarrow (X, \alpha)\rightarrow 0$ in $\mathcal{A}^G$ the corresponding extension $U(\xi)$ in $\mathcal{A}$  satisfies $\beta_g.F_gU(\xi)=U(\xi).\alpha_g$  by the following commutative diagram
\[\begin{tikzcd}
U(\xi)\colon & 0\ar[r] & Y \ar[d,"\beta_g"] \ar[r] & E\ar[r]\ar[d,"\delta_g"] & X\ar[r] \ar[d,"\alpha_g"] & 0\\
F_gU(\xi)\colon & 0\ar[r] & F_g(Y) \ar[r] & F_g(E)\ar[r] & F_g(X)\ar[r] & 0.
\end{tikzcd}\]
\end{lemma}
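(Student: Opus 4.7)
The plan is to verify that $U$ is well-defined, injective, and surjective onto the invariant subspace ${\rm Ext}^1_\mathcal{A}(X,Y)^G$; the key ingredient throughout is that $|G|=2$ is invertible in $\Bbbk$, which allows an averaging argument. First, for well-definedness, I would take a short exact sequence $\xi\colon 0\to (Y,\beta)\to (E,\delta)\to (X,\alpha)\to 0$ in $\mathcal{A}^G$ and observe that since every morphism in $\mathcal{A}^G$ commutes with the prescribed equivariant structures, applying $F_g$ to the underlying sequence $U(\xi)$ produces the square displayed in the statement; rearranging gives $\beta_g^{-1}.F_gU(\xi).\alpha_g = U(\xi)$ in ${\rm Ext}^1_\mathcal{A}(X,Y)$, so $U(\xi)$ lies in the $G$-invariant subspace. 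Linearity of $U$ under the Baer sum is immediate from the construction of the sum in $\mathcal{A}^G$ via pullback and pushout.

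For injectivity I would suppose that $U(\xi_1)$ and $U(\xi_2)$ are equivalent as extensions in $\mathcal{A}$ via some morphism $f\colon E_1\to E_2$ fitting into the usual commutative diagram, but $f$ need not be $G$-equivariant. I would then replace $f$ by the averaged morphism
\[
\tilde f \;=\; \tfrac{1}{|G|}\sum_{g\in G} (\delta_2)_g^{-1}\circ F_g(f)\circ (\delta_1)_g,
\]
and verify two things: (i) $\tilde f$ is $G$-equivariant, which follows from reindexing the sum by $g\mapsto hg$ and the cocycle identity of the $\delta_i$; (ii) $\tilde f$ still induces an equivalence between $\xi_1$ and $\xi_2$, because each summand does after using that the structure maps $\beta_g$ and $\alpha_g$ commute with the outer maps of the extensions (this is where $G$-invariance of $\xi_1,\xi_2$ is used). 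Hence $\xi_1$ and $\xi_2$ are equivalent already in $\mathcal{A}^G$.

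For surjectivity, which I expect to be the main obstacle, I would start with a $G$-invariant extension $\xi\colon 0\to Y\to E\to X\to 0$. The invariance condition $\beta_g^{-1}.F_g(\xi).\alpha_g = \xi$ guarantees, for each $g\in G$, the existence of an isomorphism $\delta_g\colon E\to F_g(E)$ making the two extensions literally equal as diagrams (not merely equivalent), but the family $\{\delta_g\}$ need not a priori satisfy the cocycle relation $\delta_{gg'}=(\varepsilon_{g,g'})_E\circ F_g(\delta_{g'})\circ \delta_g$. Since any two choices of such $\delta_g$ differ by an element of ${\rm Hom}_\mathcal{A}(X,Y)$, the defect from the cocycle condition defines a $2$-cocycle valued in ${\rm Hom}_\mathcal{A}(X,Y)$; averaging over $G$ (using $|G|=2$ invertible) produces a new family of isomorphisms satisfying the cocycle condition, which endows $E$ with a genuine $G$-equivariant structure $\delta$. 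The outer maps are then automatically morphisms in $\mathcal{A}^G$, giving a lift of $\xi$ to $\mathcal{A}^G$ with $U(\text{lift})=\xi$.

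The hardest step is producing the cocycle family $\{\delta_g\}$ in the surjectivity argument; the rest of the proof is formal. The entire construction can be phrased as the vanishing of a Hochschild-type cohomology obstruction, which is guaranteed by the assumption that the characteristic of $\Bbbk$ does not divide $|G|=2$.
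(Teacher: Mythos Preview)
The paper does not give its own proof of this lemma: it is quoted verbatim from \cite{C1} and used as a black box, so there is no argument in the paper to compare against. Your sketch is a correct and standard way to establish the result. Well-definedness is exactly the content of the displayed square; injectivity via averaging works because each term $(\delta_2)_g^{-1}\circ F_g(f)\circ(\delta_1)_g$ still commutes with the outer maps (using equivariance of the structure maps of $\xi_1,\xi_2$), so the average does too and is an isomorphism by the five lemma. For surjectivity your outline is right, but one point deserves a sentence of care: the $\delta_g$ you produce are unique only up to an automorphism of $E$ fixing $X$ and $Y$, i.e.\ up to $1+\iota\,\phi\,\pi$ with $\phi\in{\rm Hom}_{\mathcal{A}}(X,Y)$, and you should verify explicitly that the resulting defect $c(g,g')$ satisfies the $2$-cocycle identity for the natural $G$-action on ${\rm Hom}_{\mathcal{A}}(X,Y)$ before invoking $H^2(G,-)=0$. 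This is routine but not automatic.

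A cleaner alternative, closer in spirit to the equivariant machinery the paper already uses, is to pass through the equivalence $\Phi\colon\mathcal{A}^G\to(\Bbbk Q_A)G\text{-mod}$ recalled in Section~\ref{equivariant category}: the forgetful functor $U$ then corresponds to restriction along $\Bbbk Q_A\hookrightarrow(\Bbbk Q_A)G$, which is exact with exact biadjoint (induction and coinduction coincide since $|G|$ is invertible), and the isomorphism on ${\rm Ext}^1$ with image the $G$-invariants drops out from the idempotent $\frac{1}{|G|}\sum_{g\in G}u_g$. Your direct cocycle argument and this module-theoretic argument are equivalent; the latter avoids the explicit $H^2$ obstruction bookkeeping.
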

Furthermore, we study the properties of the extension groups   
on  $\mathcal{D}$, yielding the following lemma.
\begin{lemma}\label{f:1}
Suppose that $k,l \in \{0,1,\dots,n-2\}$ and $i,j\in\{n-1,n\}$ with  $l<k$. Then 
\[
{\rm dim}_{\Bbbk}{\rm Ext}_{\mathcal{D}}^{1}(\tau^{-k}\overline{P}_{i},\tau^{-l}\overline{P}_{i})=\left\{\begin{array}{ll}
0  &{\rm if} \ \textup{$(k-l)\ {\rm mod}\ 2=0$}; \\
1 &{\rm if} \ \textup{$(k-l)\ {\rm mod}\ 2=1$}.
 \end{array}\right.
\]
If $i\ne j$, then 
\[
{\rm dim}_{\Bbbk}{\rm Ext}_{\mathcal{D}}^{1}(\tau^{-k}\overline{P}_{i},\tau^{-l}\overline{P}_{j})=\left\{\begin{array}{ll}
1  &{\rm if} \ \textup{$(k-l)\ {\rm mod}\ 2=0$}; \\
0 &{\rm if} \ \textup{$(k-l)\ {\rm mod}\ 2=1$}.
 \end{array}\right.
\]
\end{lemma}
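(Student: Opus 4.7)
The plan is to pull the computation back to $\mathcal{A}$ via the equivalence $\Phi\colon \mathcal{A}^{G} \to \mathcal{D}$ from Section~\ref{sec.3}, express each of the four extension groups as a $G$-invariant subspace of a single one-dimensional ${\rm Ext}^{1}$ in $\mathcal{A}$, and then identify the parity dichotomy as a sign comparison. Because $F_{g}(\tau^{-k}P_{n-1}) = \tau^{-k}P_{n-1}$, the two indecomposables $\tau^{-k}\overline{P}_{n-1}$ and $\tau^{-k}\overline{P}_{n}$ lift under $\Phi^{-1}$ to $G$-equivariant objects on the common underlying module $\tau^{-k}P_{n-1}$, whose equivariant data differ only by the sign $\phi_{g} \in \{\pm 1\}$. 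Lemma~\ref{lem:C1} then yields
\[
{\rm Ext}^{1}_{\mathcal{D}}(\tau^{-k}\overline{P}_{i}, \tau^{-l}\overline{P}_{j}) \;\cong\; \bigl({\rm Ext}^{1}_{\mathcal{A}}(\tau^{-k}P_{n-1}, \tau^{-l}P_{n-1})\bigr)^{G},
\]
where $G$ acts through $g \cdot \xi = \theta_{g}^{-1}\,F_{g}(\xi)\,\phi_{g}$ and $\phi, \theta$ are the equivariant scalars attached to the lifts of $\tau^{-k}\overline{P}_{i}$ and $\tau^{-l}\overline{P}_{j}$.

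Next I would invoke Proposition~\ref{2.2}(3) in $\mathcal{A}$, noting that $\tau^{-(k-1)}P_{n-1} \in \mathscr{R}_{\rightarrow}(\tau^{-l}P_{n-1})$ for all $l < k \le n-2$, so ${\rm Ext}^{1}_{\mathcal{A}}(\tau^{-k}P_{n-1}, \tau^{-l}P_{n-1})$ is one-dimensional with an explicit generator $\xi$ provided by Lemma~\ref{lem:n-1}: its middle term is $\tau^{-m}P_{r} \oplus \tau^{-m}P_{2n-2-r}$, and $F_{g}$ interchanges the two summands. Writing $F_{g}(\xi) = \epsilon(k, l)\,\xi$ with $\epsilon(k, l) \in \{\pm 1\}$, the invariance condition reduces to $\epsilon(k, l) \cdot (\phi_{g}/\theta_{g}) = 1$. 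Hence each of the four extension groups in $\mathcal{D}$ has dimension $0$ or $1$, and exactly which ones are nonzero is determined by comparing the single sign $\epsilon(k, l)$ against the known ratio $\phi_{g}/\theta_{g} \in \{\pm 1\}$, which equals $+1$ when $i = j$ and $-1$ when $i \neq j$.

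The remaining task is to compute $\epsilon(k, l)$. The naturality $F_{g} \circ \tau = \tau \circ F_{g}$ gives $\epsilon(k, l) = \epsilon(k - l, 0)$, so the sign depends only on the difference $d := k - l$. I would then determine $\epsilon(d)$ by induction on $d$: the base case $d = 1$ uses the Auslander--Reiten sequence $0 \to P_{n-1} \to P_{n-2} \oplus P_{n} \to \tau^{-1}P_{n-1} \to 0$, where the swap action of $F_{g}$ on the middle term $P_{n-2} \oplus P_{n}$ together with the diagonal automorphisms $\mathrm{diag}(c_{2}, c_{1})$ of that middle term pins down $\epsilon(1)$; the inductive step glues $\xi_{d-1}$ (a generator from the previous degree) with an Auslander--Reiten sequence via the pullback--pushout construction of Lemma~\ref{2+2} to obtain $\xi_{d}$, and one tracks how the summand-swap action of $F_{g}$ on the AR middle term flips the sign. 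Combining $\epsilon(d)$ with the equivariant-structure signs (normalized by $\alpha_{g} = 1, \beta_{g} = -1$ as in Section~\ref{sec.3} and propagated through $\tau$) produces exactly the parity dichotomy asserted in the lemma.

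The main obstacle is the sign bookkeeping in the inductive step: verifying that the Baer-sum composition of Lemma~\ref{2+2} interacts correctly with the summand-swap action of $F_{g}$ and with the $\tau$-propagation of the equivariant data requires careful diagrammatic analysis, whereas the structural reduction to a single one-dimensional extension group is largely formal. An alternative reduction would be to use the biadjointness of the induction and restriction functors (valid since $\mathrm{char}\,\Bbbk \neq 2$) to show that all row and column sums of the $2\times 2$ matrix $\bigl(\dim{\rm Ext}^{1}_{\mathcal{D}}(\tau^{-k}\overline{P}_{i}, \tau^{-l}\overline{P}_{j})\bigr)_{i,j}$ equal $1$, forcing that matrix to be either the identity or the off-diagonal permutation matrix; the parity then only needs to be verified in a single base case (e.g.\ via the Auslander--Reiten sequence in $\mathcal{D}$), with the toggle propagated inductively.
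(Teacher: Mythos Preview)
Your approach is correct in outline, but it is a genuinely different and far more elaborate route than the paper's. The paper disposes of this lemma in one line: ``It is easy to check by using the Auslander--Reiten theory on the category $\mathcal{D}$.'' Concretely, the Auslander--Reiten formula gives
\[
{\rm Ext}^{1}_{\mathcal{D}}(\tau^{-k}\overline{P}_{i},\tau^{-l}\overline{P}_{j})
\;\cong\; D\,{\rm Hom}_{\mathcal{D}}(\overline{P}_{j},\tau^{-(k-l-1)}\overline{P}_{i}),
\]
and since $\overline{P}_{j}$ is projective the right-hand side is just $\overline{e}_{j}\,\tau^{-(k-l-1)}\overline{P}_{i}$, whose dimension is written down explicitly in Remark~\ref{3.2}(2) with the parity of $k-l-1$ already built in. The whole computation is five lines and never leaves $\mathcal{D}$.

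Your primary route---pull back through $\Phi$, identify the two equivariant structures on $\tau^{-k}P_{n-1}$, and track the sign $\epsilon(d)$ inductively via Lemma~\ref{2+2}---is sound, but the sign bookkeeping you flag as the main obstacle is real work (one has to fix how the equivariant scalar propagates under $\tau$ and how $F_{g}$ acts on the Baer splice), and all of it is avoided by the direct argument above. Your alternative via biadjointness (forcing each row and column sum of the $2\times 2$ ${\rm Ext}$-matrix to equal $\dim{\rm Ext}^{1}_{\mathcal{A}}(\tau^{-k}P_{n-1},\tau^{-l}P_{n-1})=1$, then pinning down the single remaining bit with one AR sequence in $\mathcal{D}$) is closer in spirit to the paper and would be a perfectly acceptable short proof; it just still does more than is needed given that Remark~\ref{3.2}(2) already records the answer.
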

\begin{proof}
It is easy to check by using the Auslander-Reiten theory on the category  $\mathcal{D}$.
\end{proof}
 
\begin{proposition}\label{4.28}
Let  $\gamma_{s}^{t,l_{1}},\gamma_{u}^{v,l_{2}}$ be   tagged line segments in $\Omega$.
If   ${\rm Int}^{+}(\gamma_{s}^{t,l_{1}},\gamma_{u}^{v,l_{2}})=1$, then 
 the non-split short exact sequence corresponding to this positive intersection is 
\begin{equation}\label{short exact}
\begin{tikzcd} 
 0\ar[r] & F_{D}(\gamma_{u}^{v,l_{2}}) \ar[r] &  \overline{E}
\ar[r] & F_{D}(\gamma_{s}^{t,l_{1}})  \ar[r]  & 0,
\end{tikzcd}
\end{equation}   where  
\[\overline{E}=\left\{\begin{array}{ll}
F_D(\gamma_{u}^{-u,l_1})\oplus  F_D(\gamma_s^v)  &{\rm if} \ \textup{$s=-t$ and  $u\ne-v$};\\ 
F_D(\gamma_u^t) &{\rm if} \ \textup{$s=-t$, $u=-v$ and $l_{1}\ne l_{2}$}; \\
F_D(\gamma_{-t}^{t,l_2})\oplus   F_D(\gamma_s^v)  &{\rm if} \ \textup{$s\ne-t$ and $u=-v$};\\
F_D(\gamma_u^t)\oplus F_D(\gamma_{-v}^{v,1})\oplus F_D(\gamma_{-v}^{v,-1}) &{\rm if} \ \textup{$s\ne-t$, $u\ne-v$ and $s=-v$}; \\
F_D(\gamma_{-t}^{t,1})\oplus F_D(\gamma_{-t}^{t,-1})\oplus  F_D(\gamma_s^v) &{\rm if} \ \textup{$s\ne-t$, $u\ne-v$ and $u=-t$};\\
F_D(\gamma_u^t)\oplus F_D(\gamma_s^v) &{\rm if} \ \textup{$s\ne-t$, $u\ne-v$, $s\ne-v$ and $u\ne-t$}.
 \end{array}\right.\]
Furthermore, if the endpoint of   $\gamma_{u}^{v,l_{2}}$ (resp. $\gamma_{s}^{t,l_{1}}$) coincides with the starting point of   $\gamma_{s}^{t,l_{1}}$ (resp. $\gamma_{u}^{v,l_{2}}$),  then $\gamma_s^v=0$ (resp. $\gamma_u^t=0$). 
\end{proposition}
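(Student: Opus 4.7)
My plan is to prove the proposition by lifting the geometric situation from the punctured polygon $P(Q_D)$ to the $(2n-2)$-gon $P(Q_A)$, applying the analogous result for type $\mathbb{A}$ recorded in Theorem~\ref{thm:FA} and Proposition~\ref{2.2}, and then pushing the resulting short exact sequence down to $\mathcal{D}$ via the exact functor $\psi$ together with the identifications $F_D = \psi_i \circ F_A \circ f_i$ from Definition~\ref{equivalence}. The reason this should work is that the functors $f_1, f_2$ already translate pivots into pivots, so they translate positive crossings into positive crossings, and $\psi$ is exact.

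First I would describe how a single positive intersection in $P(Q_D)$ lifts to $P(Q_A)$. Under $f_1$ (and $f_2$), a non-diametral tagged segment $\gamma_s^{t}$ pulls back to $\gamma(s', t') \in \omega$, while a diametral segment $\gamma_{-t}^{t,\epsilon}$ pulls back to $\gamma(2n-3-t', t')$, whose image under $F_A$ lies in the $\tau$-orbit of $P_{n-1}$. By the central symmetry of $P(Q_A)$, the $G$-orbit of $\gamma(s',t')$ is $\{\gamma(s',t'), \gamma(2n-3-t', 2n-3-s')\}$. Consequently, a single positive intersection in $P(Q_D)$ lifts to either one or two positive intersections in $P(Q_A)$: precisely one in the generic Case~6, and two in Cases~4 and~5, where an antipodal endpoint condition ($s=-v$ or $u=-t$) forces the symmetric partner under $g$ to produce a second crossing.

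With the lift in place, I would invoke the BGMR description of the generator of $\mathrm{Ext}^1$ in type $\mathbb{A}$: a single positive crossing between $\gamma(s',t')$ and $\gamma(u',v')$ yields a non-split short exact sequence whose middle term is $F_A(\gamma(u',t')) \oplus F_A(\gamma(s',v'))$ (with the convention that a segment with equal endpoints is $0$), which is a direct consequence of Proposition~\ref{2.2}(3) applied to the common points of $\Sigma_{\rightarrow}$ and $\Sigma_{\leftarrow}$. Applying the exact functor $\psi$ and using the computed values $\psi(\tau^{-k}P_i)$ immediately gives Case~6; for Cases~4 and~5 the two symmetric sequences in $\mathcal{A}$ assemble into one sequence in $\mathcal{D}$, and since $\psi(\tau^{-k}P_{n-1})=\tau^{-k}\overline{P}_{n-1}\oplus \tau^{-k}\overline{P}_{n}$, the $P_{n-1}$-type summand explains the pair of diameters $F_D(\gamma_{-v}^{v,1})\oplus F_D(\gamma_{-v}^{v,-1})$ appearing in $\overline{E}$.

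The diameter cases (1), (2), and (3) will be the main obstacle, since there $\psi$ alone does not produce the desired middle term directly. Here I would use Lemma~\ref{lem:5.7}: the hypothesis $F_g(E)\in \Sigma_{\leftarrow}(\tau^{-k}P_{n-1})$ (resp.\ $\Sigma_{\rightarrow}$) is the lift of the geometric condition that one endpoint of the non-diametral segment lies on the pivot chain of the diameter, which is exactly what defines Cases~1 and~3; combining the resulting sequence with Lemma~\ref{lem:n-1} and then applying $\psi$ produces the stated $\overline{E}$. For Case~2, both tagged segments are diameters with opposite tags, so their lifts share the same underlying segment in $P(Q_A)$ but carry opposite $G$-equivariant structures; I would use Lemma~\ref{lem:C1} with the equivalence $\Phi:\mathcal{A}^G\to\mathcal{D}$ together with Lemma~\ref{f:1} to show that the extension, produced from a sectional path of length one in $\mathcal{A}$ via the induction functor, has the single indecomposable middle term $F_D(\gamma_u^t)$. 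Throughout, non-splitness follows from Proposition~\ref{prop:E} combined with Lemma~\ref{f:1}, while the final clause about $\gamma_s^v=0$ or $\gamma_u^t=0$ when endpoints coincide is precisely the BGMR convention inherited under $f_1,f_2$.
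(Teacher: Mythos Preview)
Your overall strategy---lift to $P(Q_A)$, use the type $\mathbb{A}$ description of the extension, push down---is exactly what the paper does, and your treatment of the purely non-diametral cases (your Cases~4,~5,~6) via the exact functor $\psi$ matches the paper's argument verbatim. One small correction: in Cases~4 and~5 there is no need to ``assemble two symmetric sequences''; applying $\psi$ to the single type~$\mathbb{A}$ sequence already produces the three-term middle, because $\psi(\tau^{-m}P_{n-1})=\tau^{-m}\overline{P}_{n-1}\oplus\tau^{-m}\overline{P}_{n}$.

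There is, however, a genuine gap in your plan for Cases~1 and~3 (exactly one diameter). You propose to take the output of Lemma~\ref{lem:5.7} and apply $\psi$. But $\psi$ sends the ends $N\oplus F_g(N)$ and $\tau^{-k}P_{n-1}$ to $\overline{N}\oplus\overline{N}$ and $\tau^{-k}\overline{P}_{n-1}\oplus\tau^{-k}\overline{P}_{n}$ respectively, so you obtain a ``doubled'' sequence rather than the desired one ending at the single tag-specific object $F_D(\gamma_{-t}^{t,l_1})$; extracting the correct summand sequence is not automatic and would require additional $\mathrm{Ext}$ computations you have not yet established. The paper avoids this by using the equivariant equivalence $\Phi:\mathcal{A}^G\to\mathcal{D}$ together with Lemma~\ref{lem:C1}: the sequence from Lemma~\ref{lem:5.7} is $G$-invariant, hence lifts to $\mathcal{A}^G$, and $\Phi$ then yields the sequence in $\mathcal{D}$ directly with the correct single ends; the remaining tag $l_3$ on the diametral summand of $\overline{E}$ is then pinned down as $l_1$ by checking which of $\mathrm{Hom}(F_D(\gamma_u^{-u,\pm1}),F_D(\gamma_{-t}^{t,l_1}))$ is nonzero. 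In short, you have interchanged the tools for Cases~1/3 and Case~2: the paper uses $\Phi$ and Lemma~\ref{lem:C1} for the single-diameter cases and uses $\psi$ with Lemma~\ref{f:1} for the double-diameter case, the reverse of what you propose.
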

\begin{proof} 
Suppose that the vertex $X_{h'}$ of $P(Q_{A})$ with $h'\in\{0,1,\dots ,2n-3\}$ is relabeled  as $Y_{h}$ in $P(Q_{D})$, and denote 
 \begin{align*}
 &\overline{M}:=F_{D}(\gamma_{s}^{t,l_{1}}), \qquad \overline{N}:=\ F_{D}(\gamma_{u}^{v,l_{2}}),\\
 &M:=F_A(\gamma(s',t')), \enspace N:=F_A(\gamma(u',v')).
  \end{align*}
  By   the proof provided in \cite[Theorem~6.8]{BGMR},   there exists  a short exact sequence $\xi$ in  the category 
$\mathcal{A}$:
\[
\begin{tikzcd}\label{short exact sequence}
 \xi:0\ar[r] & N \ar[r] & F_A(\gamma(u',t'))\oplus F_A(\gamma(s',v'))\ar[r] & 
M\ar[r]  & 0.
\end{tikzcd}
\]
We now proceed to prove the statement by considering each case individually.

(1) If $s=-t$ and $u \ne -v$, then we   assume that $M=\tau^{-k}P_{n-1}$ and $F_A(\gamma(u',2n-3-u'))=\tau^{-m}P_{n-1}$ for some $k,m\in\{0,1\dots,n-2\}.$
Again by the proof of \cite[Theorem~6.8]{BGMR},   we have a short exact sequence:  
\[ \begin{tikzcd}
	 0 & {\tau^{-m}P_{n-1}} & {F_A(\gamma(u',t'))\oplus F_A(\gamma(2n-3-t',2n-3-u'))} &   {\tau^{-k}P_{n-1}} & 0.
	\arrow[from=1-1, to=1-2]
	\arrow[from=1-2, to=1-3]
	\arrow[from=1-3, to=1-4]
	\arrow[from=1-4, to=1-5]
\end{tikzcd} \] 
By applying  Lemma~\ref{lem:5.7}(1), we can get the following short exact sequence in $\mathcal{A}$:
\[  
\eta:\begin{tikzcd} 
  0\ar[r] & N\oplus F_{g}(N) \ar[r] &\tau^{-m}P_{n-1}\oplus F_A(\gamma(s',v'))\oplus  F_{g}(F_A(\gamma(s',v')))\ar[r] & 
\tau^{-k}P_{n-1}\ar[r]  & 0.
\end{tikzcd} 
\] 

Since $\mathcal{A}$ is an abelian category, it is a  result from \cite[Section~5.3]{C2} that the category of equivariant objects $\mathcal{A}^{G}$ is also abelian. Consequently, the equivalent functor $\Phi: \mathcal{A}^{G} \to \mathcal{D}$, which operates between abelian categories,  is exact. 
Thanks to  
   Lemma~\ref{lem:C1} and the short exact sequence $\eta$ in $\mathcal{A}$, we obtain the following short exact sequence in $\mathcal{D}$:
\[
\begin{tikzcd}[cramped, ampersand replacement=\&]
0 \arrow[r] \&\overline{N} \arrow[r] \& F_D(\gamma_{u}^{-u,l_{3}})\oplus F_{D}(\gamma_s^v)  \arrow[r] 
\&  F_D(\gamma_{-t}^{t,l_{1}}) \arrow[r] \& 0,
\end{tikzcd}
\] 
where $l_3\in\{-1,1\}$. 
   Then sequence \eqref{short exact} holds, since the space   ${\rm Hom}(F_D(\gamma_{u}^{-u,l_{3}}),F_D(\gamma_{-t}^{t,l_{1}}))\ne 0$ if and only if $l_3=l_1$. 
Furthermore,  we can apply a similar argument to verify that sequence \eqref{short exact} also holds  in the case where  $s\ne-t$  and  $u=-v$. 

(2) If  $s=-t$, $u=-v$, then we   assume that  $N=\tau^{-l}P_{n-1}$ and $M=\tau^{-k}P_{n-1}$ for $0\leq k,l \leq n-2$.  
 Applying the exact functor $\psi$ to $\xi$, we obtain a short exact sequence   in  the category $\mathcal{D}$:
\[
\begin{tikzcd}[column sep=small]
 0\ar[r] &\tau^{-l}\overline{P}_{n-1}\oplus \tau^{-l}\overline{P}_{n} \ar[r] & 
F_D(\gamma_{u}^{t})\oplus F_D(\gamma_{s}^{v})\ar[r] & \tau^{-k}\overline{P}_{n-1}\oplus \tau^{-k}\overline{P}_{n}\ar[r]  & 0.
\end{tikzcd}\]  
It follows from  the condition $l_{1}\ne l_{2}$ and  Lemma~\ref{f:1} that
  $\overline{E}=F_D(\gamma_{u}^{t})$.

(3) If $s\ne-t$, $u\ne-v$, then applying the  exact   functor $\psi$ to   $\xi$ yields the following      exact sequence:
\[
\begin{tikzcd} 
 0\ar[r] & \overline{N} \ar[r] &  \psi(F_A(\gamma(u',t'))\oplus\psi( F_A(\gamma(s',v'))
\ar[r] & \overline{M}  \ar[r]  & 0.
\end{tikzcd}\] 
According to the definition of $F_D$, we get that 
\[\overline{E}=\left\{\begin{array}{ll}
F_D(\gamma_u^t)\oplus F_D(\gamma_s^v) &{\rm if} \ \textup{$s\ne-v$ and $u\ne-t$}; \\
F_D(\gamma_u^t)\oplus F_D(\gamma_{-v}^{v,1})\oplus F_D(\gamma_{-v}^{v,-1}) &{\rm if} \ \textup{$s=-v$  and $u\ne-t$}; \\
F_D(\gamma_{-t}^{t,1})\oplus F_D(\gamma_{-t}^{t,-1})\oplus  F_D(\gamma_s^v) &{\rm if} \ \textup{$s\ne-v$  and $u=-t$}. 
 \end{array}\right.\]
  It is evident that if   $s=-v$ (resp. $u=-t$) then $u\ne -t$ (resp. $s\ne-v$).
This finishes the proof.
\end{proof}

Now we turn to give a geometric interpretation for  the dimension of  first extension group  between two indecomposable representations over  $Q_{D}$.

\begin{definition}\label{crossing number}
Let  $\gamma_{s}^{t,l_{1}}$  and $\gamma_{u}^{v,l_{2}}$ be  tagged line segments in 
$\Omega$.  
 The \emph{crossing number} 
${\rm Int}(\gamma_{s}^{t,l_{1}},\gamma_{u}^{v,l_{2}})$ of $\gamma_{s}^{t,l_{1}}$ and $\gamma_{u}^{v,l_{2}}$ is defined as follows: 
\[{\rm Int}(\gamma_{s}^{t,l_{1}},\gamma_{u}^{v,l_{2}})=\left\{\begin{array}{ll}
{\rm Int}^{+}(\gamma_{-t}^{t,l_{1}},\gamma_{-v}^{v,l_{2}})|\frac{l_{1}-l_{2}}{2}| &{\rm if} \ \textup{ $s=-t$ \ {\rm and}\ $u=-v$}; \\
{\rm Int}^{+}(\gamma_{s}^{t},\gamma_{u}^{v})+{\rm Int}^{+}(\gamma_{-t}^{-s},\gamma_{u}^{v})  &{\rm if} \ \textup{ $s\ne-t$ \ {\rm and}\ $u\ne-v$}; \\
{\rm Int}^{+}(\gamma_{s}^{t,l_{1}},\gamma_{u}^{v,l_{2}}) &  \textup{ otherwise},
 \end{array}\right.\]
 where $|\frac{l_{1}-l_{2}}{2}|$ denotes the absolute value of $\frac{l_{1}-l_{2}}{2}$. We say that  the tagged line segment $\gamma_{s}^{t,l_{1}}$ \emph{crosses}  the  tagged line segment $\gamma_{u}^{v,l_{2}}$ if either ${\rm Int}(\gamma_{s}^{t,l_{1}},\gamma_{u}^{v,l_{2}})$ or ${\rm Int}(\gamma_{u}^{v,l_{2}},\gamma_{s}^{t,l_{1}})$ is non-zero.
\end{definition}

\begin{theorem}\label{E and e}
Let $\overline{M}$ and $\overline{N}$ be  indecomposable representations over the quiver $Q_{D}$. Suppose that $F_{D}(\gamma_{s}^{t,l_{1}})=\overline{M}$ and $F_{D}(\gamma_{u}^{v,l_{2}})=\overline{N}$,  where $\gamma_{s}^{t,l_{1}}$  and $\gamma_{u}^{v,l_{2}}$ are tagged line segments of $P(Q_{D})$.
Then 
\begin{equation}\label{dim=int}
    {\rm dim}_{\Bbbk}{\rm Ext}_{\mathcal{D}}^{1}(\overline{M},\overline{N})={\rm Int}(\gamma_{s}^{t,l_{1}},\gamma_{u}^{v,l_{2}}).
\end{equation}
\end{theorem}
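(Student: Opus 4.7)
The plan is to reduce the computation of ${\rm Ext}^1_\mathcal{D}(\overline{M},\overline{N})$ to the type~$\mathbb{A}$ setting via the equivariant theory of Section~\ref{sec.3}, and then count $G$-invariant extensions using the polygon model for $\mathcal{A}$. Writing $\overline{M}=\Phi(M_0,\alpha)$ and $\overline{N}=\Phi(N_0,\beta)$, Lemma~\ref{lem:C1} yields
\[
{\rm dim}_{\Bbbk}{\rm Ext}^1_{\mathcal{D}}(\overline{M},\overline{N})={\rm dim}_{\Bbbk}{\rm Ext}^1_{\mathcal{A}}(M_0,N_0)^{G}.
\]
The type~$\mathbb{A}$ analogue of this theorem (which follows from Proposition~\ref{2.2}(3) together with Theorem~\ref{thm:FA}, as in \cite{BGMR}) identifies ${\rm dim}\,{\rm Ext}^1_\mathcal{A}$ between indecomposables with the positive intersection count on $P(Q_A)$. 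The argument then splits according to whether each of $\gamma_s^{t,l_1}$ and $\gamma_u^{v,l_2}$ is a diameter of $P(Q_D)$, equivalently whether $M_0$ and $N_0$ are $F_g$-stable (diameter) or decompose as $M\oplus F_g(M)$ (non-diameter).

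In the generic case $s\neq -t$, $u\neq -v$, we have $M_0=M\oplus F_g(M)$ and $N_0=N\oplus F_g(N)$ carrying the swap equivariant structure $\delta$, and ${\rm Ext}^1_\mathcal{A}(M_0,N_0)$ splits into four summands ${\rm Ext}^1(F_g^iM,F_g^jN)$ which the $G$-action permutes in two pairs, each pair being identified via $F_g$ by Proposition~\ref{prop:E}(2). Hence the invariant subspace has dimension ${\rm dim}\,{\rm Ext}^1(M,N)+{\rm dim}\,{\rm Ext}^1(M,F_g N)$. Under the relabeling~$(*)$ and using that the central symmetry of $P(Q_D)$ preserves perpendicular slopes (hence positive intersection counts), this becomes exactly ${\rm Int}^+(\gamma_s^t,\gamma_u^v)+{\rm Int}^+(\gamma_{-t}^{-s},\gamma_u^v)$, matching the crossing number in this case. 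The mixed case, where exactly one of the segments is a diameter, is handled in the same spirit with a single surviving $G$-invariant summand, and Proposition~\ref{4.28} furnishes the corresponding explicit short exact sequence as a consistency check.

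The hardest case is when both segments are diameters, so $M_0=\tau^{-k}P_{n-1}$ and $N_0=\tau^{-l}P_{n-1}$ are each indecomposable and $F_g$-stable. Here ${\rm Ext}^1_\mathcal{A}(M_0,N_0)$ is at most one-dimensional, and the $G$-action on it is multiplication by $\beta_g^{-1}\alpha_g\varepsilon$, where $\varepsilon\in\{\pm 1\}$ is the scalar by which $F_g$ acts on a generator. Using the explicit extension supplied by Lemma~\ref{lem:n-1}, whose middle term $\tau^{-m}P_r\oplus\tau^{-m}P_{2n-2-r}$ is manifestly $F_g$-stable, I expect to verify $\varepsilon=+1$; the invariant space is then nonzero precisely when $\alpha_g=\beta_g$. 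This condition must then be matched with the combinatorial factor $|\tfrac{l_1-l_2}{2}|$ via the dictionary (through $\psi_1$, $\psi_2$ and the parities of $k$ and $l$) that sends $\tau^{-k}P_{n-1}$ to either $\tau^{-k}\overline{P}_{n-1}$ or $\tau^{-k}\overline{P}_n$. This bookkeeping, cross-checked against Lemma~\ref{f:1}, is the most delicate step of the argument.
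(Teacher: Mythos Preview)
Your overall plan---reducing to $G$-invariant extensions via Lemma~\ref{lem:C1} and then invoking the type~$\mathbb{A}$ polygon model---is exactly the paper's strategy for the non-diameter cases. Your treatment of the generic case $s\neq -t$, $u\neq -v$ matches the paper's argument essentially verbatim. For the mixed case your framing (invariants of a swap action on two summands, giving the dimension of one summand regardless of the sign $\alpha_g$) is in fact cleaner than the paper's, which instead constructs an explicit basis $\{\xi_1,\xi_2\}$ of ${\rm Ext}^1_\mathcal{A}(\tau^{-k}P_{n-1},N\oplus F_g(N))$ using Lemma~\ref{lem:5.7} and checks by hand that only $\xi_1$ is $G$-invariant.

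The real divergence is the diameter--diameter case, and here there is a genuine gap. The paper treats this as the \emph{easiest} case: it simply invokes Lemma~\ref{f:1} (a direct Auslander--Reiten computation inside $\mathcal{D}$) and compares with Definition~\ref{crossing number}. Your proposed route through the equivariant theory is viable in principle, but your expectation that $\varepsilon=+1$ uniformly is wrong. Since $\Phi$ commutes with $\tau$, the equivariant structure with $\alpha_g=+1$ on $\tau^{-k}P_{n-1}$ always corresponds to $\tau^{-k}\overline{P}_{n-1}$ (and $\alpha_g=-1$ to $\tau^{-k}\overline{P}_n$), so your conclusion ``invariant iff $\alpha_g=\beta_g$'' would say ${\rm Ext}^1_\mathcal{D}(\tau^{-k}\overline{P}_i,\tau^{-l}\overline{P}_j)\neq 0$ if and only if $i=j$, independent of $k-l$. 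But Lemma~\ref{f:1} shows the answer also depends on the parity of $k-l$ (for $i=j$ and $k-l$ even the extension vanishes). Hence $\varepsilon$ must alternate with the parity of $k-l$; the fact that the middle term of Lemma~\ref{lem:n-1} is $F_g$-stable as an object does not force $F_g(\xi)=\xi$ as an extension class, because $F_g$ swaps the two summands and the induced sign on the morphisms is exactly what is at stake. Either carry out that sign computation honestly, or---as the paper does---bypass the equivariant machinery here entirely and use Lemma~\ref{f:1}.
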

\begin{proof} 
Let the vertex $X_{h'} $  with $h'\in\{0,1,\dots ,2n-3\}$ of $P(Q_{A})$  be relabeled  by $Y_{h}$ of $P(Q_{D})$. Denote  $F_A(\gamma(s',t'))=M$ and  $F_A(\gamma(u',v'))=N$.
 If both ${s}={-t}$ and ${u}={-v}$, then it follows from Definition~\ref{crossing number} and Lemma~\ref{f:1} that equation~\eqref{dim=int} is satisfied.  
 In all other cases, the proof can be divided into the following two cases.

(1) If ${s}\neq {-t}$ and ${u}\neq {-v}$, then  
it follows from Definition~\ref{crossing number} and the proof of \cite[Theorem~6.8]{BGMR} that
$${\rm Int}(\gamma_{s}^{t,l_{1}},\gamma_{u}^{v,l_{2}})={\rm dim}_{\Bbbk}{\rm Ext}_{\mathcal{A}}^{1}(M,N)+{\rm dim}_{\Bbbk} {\rm Ext}_{\mathcal{A}}^{1}(F_{g}(M),N).$$
 Thanks to  the exact functor $\Phi: \mathcal{A}^{G} \to \mathcal{D}$ and   Lemma \ref{lem:C1},   the desired result will follow immediately once we  prove  the validity of equation \eqref{2=1+1}: 
\begin{equation}\label{2=1+1}
{\rm dim}_{\Bbbk}{\rm Ext}_{\mathcal{A}}^{1}(M\oplus F_{g}(M) ,N \oplus F_{g}(N))^{G}={\rm dim}_{\Bbbk}{\rm Ext}_{\mathcal{A}}^{1}(M,N)+{\rm dim}_{\Bbbk} {\rm Ext}_{\mathcal{A}}^{1}(F_{g}(M),N)\end{equation}

If the right-hand side of equation \eqref{2=1+1} equals 0,
then  Proposition~\ref{prop:E} implies that  equation \eqref{2=1+1} holds.

If the right-hand side of equation \eqref{2=1+1} equals 1, then we may assume that ${\rm Ext}_{\mathcal{A}}^{1}(M,N)$ is generated by a non-split short exact sequence $\xi_1$.
Note that an element  of ${\rm Ext}_{\mathcal{A}}^{1}(M\oplus F_{g}(M) ,N \oplus F_{g}(N))$  has the  form  $a \xi_{1} \oplus b F_{g}(\xi_{1}),$ where $a,b\in \Bbbk$. 
Moreover, it is simple to show that
 $a \xi_{1} \oplus b F_{g}(\xi_{1})$ is an element of ${\rm Ext}_{\mathcal{A}}^{1}(M\oplus F_{g}(M) ,N \oplus F_{g}(N))^{G}$ if and only if $a=b$.
 Hence equation \eqref{2=1+1} holds.

If the right-hand side of equation \eqref{2=1+1} equals 2, 
assume that ${\rm Ext}_{\mathcal{A}}^{1}(M,N)$ and ${\rm Ext}_{\mathcal{A}}^{1}(F_{g}(M),N)$  are generated by non-split short exact sequences $\xi_{1}$ and $\xi_{2}$, respectively.
Then, $\xi_{1} \oplus F_{g}(\xi_{1})$ and $\xi_{2} \oplus F_{g}(\xi_{2})$ are elements of ${\rm Ext}_{\mathcal{A}}^{1}(M\oplus F_{g}(M) ,N \oplus F_{g}(N))^{G}$ from  above discussion. 
 Due to the automorphic functor $F_{g}$, we get that
 $\xi_{1} \oplus F_{g}(\xi_{1})$ is $\Bbbk$-linearly independent of $\xi_{2} \oplus F_{g}(\xi_{2})$. 
Given that  ${\rm dim}_{\Bbbk}{\rm Ext}_{\mathcal{D}}^{1}(\overline{M},\overline{N})$ is at most equal to 2,    Lemma \ref{lem:C1} implies that 
equation \eqref{2=1+1} is valid.

(2) Either ${s}={-t}$ or ${u}={-v}$, but not both.  Without loss of generality, we  may assume that
  $\overline{M}=F_{D}(\gamma_{-t}^{t})=\tau^{-k}\overline{P}_{n}$ for some $k\in \{0,1,\dots,n-2\}$.
 An argument similar to the one used in (1) shows that
our problem reduces to
\begin{equation}\label{1=1}
{\rm dim}_{\Bbbk}{\rm Ext}_{\mathcal{A}}^{1}(\tau^{-k}{P}_{n-1} ,N \oplus F_{g}(N))^{G}={\rm dim}_{\Bbbk}{\rm Ext}_{\mathcal{A}}^{1}(\tau^{-k}{P}_{n-1} ,N).\end{equation}

We claim that the left sides of  equation \eqref{1=1} is less than 2. In fact, if ${\rm Ext}_{\mathcal{A}}^{1}(\tau^{-k}P_{n-1},N)$ is generated by a short exact sequence whose middle term is  $E \oplus F$ with    $F_g(E)$  belonging to $ \Sigma_{\leftarrow}(\tau^{-k}P_{n-1})$, then by Lemma~\ref{lem:5.7}(1),
 there exists $h\in\{0,1,\dots,k-1\}$ such that
\[
\xi_{1}:\begin{tikzcd}[cramped, ampersand replacement=\&]
0 \arrow[r] \& N\oplus F_{g}(N) \arrow[r] \& \tau^{-h}P_{n-1}\oplus F\oplus F_{g}(F) \arrow[r] \& \tau^{-k}P_{n-1}  \arrow[r] \& 0
\end{tikzcd}
\] is a  short exact sequence. 
Note that the short exact sequence 
\[
\xi_{2}:\begin{tikzcd}[cramped, ampersand replacement=\&]
0 \arrow[r] \& N\oplus F_{g}(N)  \arrow[r] \& E\oplus F\oplus F_{g}(N)  \arrow[r ] \&\tau^{-k}P_{n-1}  \arrow[r] \& 0
\end{tikzcd}
\]
is $\Bbbk$-linearly independent of $\xi_{1}$. Thus $\{\xi_{1},\xi_{2}\}$ is a  $\Bbbk$-basis of 
${\rm Ext}_{\mathcal{A}}^{1}(\tau^{-k}P_{n-1},N\oplus F_{g}(N))$.  
It  is easy to verified that  ${\rm Ext}_{\mathcal{A}}^{1}(\tau^{-k}P_{n-1},N\oplus F_{g}(N))^{G}$ is generated by $\xi_{1}$.

Since both sides of  equation \eqref{1=1} are less than 2,  it suffices to prove that
 ${\rm Ext}_{\mathcal{A}}^{1}(\tau^{-k}{P}_{n-1} ,N)=0$ if and only if  ${\rm Ext}_{\mathcal{A}}^{1}(\tau^{-k}{P}_{n-1} ,N \oplus F_{g}(N))^{G}=0$.  Thanks to Proposition~\ref{prop:E}, the necessity
 is obvious.
The converse implication is also true.  If not, the first extension group  ${\rm Ext}_{\mathcal{A}}^{1}(\tau^{-k}P_{n-1} ,N)$ is generated by a non-split short exact sequence.
Then Lemma~\ref{lem:5.7}(1) implies that the left-hand side of equation \eqref{1=1} is non-zero.
This leads to a contradiction.
The proof of the theorem is now complete.
\end{proof}

\section{Maximal almost pre-rigid representations over $Q_D$}\label{sec.6}
In this section, we introduce the notion of maximal almost pre-rigid representations over  $Q_D$ and  give a geometric realization of such representations. 
  As an application, we show the endomorphism algebras of these representations are tilted algebra.

Recall  that  a representation is called  \emph{basic} if it  contains no repeated direct summands. We define the  almost pre-rigid  representations over $Q_{D}$.
\begin{definition}\label{def:arr of D}
 A basic representation $\overline{T}$ over  $Q_{D}$  
is called \emph{almost pre-rigid}  provided that  the following  conditions hold: for each pair  $\overline{X}$ and  $\overline{Y}$ of indecomposable summands of $\overline{T}$,
\begin{itemize}
 \item[(1)] if  $\overline{X}= \tau^{-k}\overline{P}_{i}$ and  $\overline{Y}=\tau^{-l}\overline{P}_{j}$  with $i,j\in\{n-1,n\}$, then 
 ${\rm  Ext}_{\mathcal{D}}^{1}(\overline{X},\overline{Y})$ is trivial;
 \item[(2)]  
  otherwise, 
   ${\rm Ext}_{\mathcal{D}}^{1}(\overline{X},\overline{Y})$ is either $0$ or  generated by a short exact sequence 
$$\begin{tikzcd}
 0\ar[r] & \overline{Y} \ar[r] &\overline{E}\ar[r] & \overline{X}\ar[r]  & 0,
\end{tikzcd}$$
where $\overline{E}$ is indecomposable or $\tau^{-h}\overline{P}_{n-1}\oplus\tau^{-h}\overline{P}_{n}$  for some $h\in \{0,1,\dots,n-2\}$. 
\end{itemize}
\end{definition}
\begin{remark}
According to Lemma~\ref{f:1}, it is easy to see that
  if  $ \tau^{-k}\overline{P}_{n-1}\oplus  \tau^{-k}\overline{P}_{n}$ is a direct summand  of  an almost pre-rigid representation  $ \overline{T}$, then   for every $i\in\{n-1,n\}$ and    $l\in\{0,1\dots, n-2\}$,  the representation $\overline{T}\oplus\tau^{-l}\overline{P}_{i}$ fails to be almost pre-rigid. 
\end{remark}

\begin{definition}\label{maximal pre}
An almost pre-rigid representation $\overline{T}$ over  $Q_{D}$ is \emph{maximal almost pre-rigid}, if 
for every non-zero representation $\overline{M}$ over  $Q_{D}$,   $\overline{T}\oplus \overline{M}$ is not almost pre-rigid.
 Let $\overline{mar}(Q_{D})$ denote the set of all maximal almost pre-rigid representations over  $Q_{D}$.
\end{definition}


Now we introduce the definition of tagged triangulation of $P(Q_{D})$ as follows.

\begin{definition}\label{tri of P(Q_{D})}
A \emph{tagged triangulation}  of the punctured $(2n-2)$-gon $P(Q_{D})$  is defined as a maximal  set of  tagged line segments (including the boundary edges) that do not cross in the interior of  $P(Q_{D})$.
\end{definition}
\begin{remark} \label{rek: bi of tri}
In contrast to the  centrally symmetric colored triangulations of a regular $(2n-2)$-gon described in \cite[Proposition~3.16(3)]{MR2031858},  a tagged triangulation   $\mathcal{T}$ of  $P(Q_{D})$ introduced in Definition~\ref{tri of P(Q_{D})}  includes  the boundary edges and  the tagged line segments   $\gamma_{s}^{t}, \gamma_{-t}^{-s}$   in $\mathcal{T}$ satisfy the relation $\gamma_{s}^{t}=\gamma_{-t}^{-s}$.

For each  tagged triangulation   of  $P(Q_{D})$, the  tagged line segments  passing through    $O$  either all share the same tag  or   consist of two tagged line segments connecting the same antipodal points with different tags. 
Consequently, the tagged triangulations of $P(Q_{D})$ are in bijection with the   centrally symmetric colored triangulations of a regular $(2n-2)$-gon.

Furthermore,
 under this bijection, two  centrally symmetric colored triangulations of 
$(2n-2)$-gon are related by one of   the ``type D flips"  defined in  \cite[Proposition~3.16(4)]{MR2031858} if   and only if  their corresponding tagged triangulations $\mathcal{T}_{1},\mathcal{T}_{2}$ of  $P(Q_D)$ satisfy that   there exist two  distinct tagged  line segments $\gamma_1 $ and $\gamma_2$ such that
$\mathcal{T}_{1}\setminus\{\gamma_1\}=\mathcal{T}_{2}\setminus\{\gamma_2\}$.
\end{remark}
 
\begin{example}
We continue with the case of $Q_{D_5}$. Figure~\ref{fig:tri} illustrates four  distinct   tagged triangulations of  $P(Q_{D_5})$. In the first tagged triangulation, there are   two tagged line segments, $\gamma_{-2}^{2,1}$ and $ \gamma_{-2}^{2,-1}$ with different tags connecting  the same antipodal points. Conversely, in the remaining three tagged triangulations, the tagged line segments through   $O$  have the same tag.
\begin{figure}[h]
\centering
\begin{tikzpicture}[scale=1]
\node (1) at (0+12.5,1-6){};
\node (2) at (-0.67+12.5,0.72-6){};
\node (3) at (0.9+12.5,0.44-6){};
\node (4) at (-0.987+12.5,0.16-6){};
\node (5) at (0.987+12.5,-0.16-6){};
\node (6) at (-0.9+12.5,-0.44-6){};
\node (7) at (0.67+12.5,-0.72-6){};
\node (8) at (0+12.5,-1-6){};
\node (9) at (0+12.5,0-6){};
\fill (9) circle(0.07);
\draw[line width=1.2pt,red] (0.45+12.5,0.1-6)--(0.35+12.5,0.3-6);
\draw[->,line width=1.2pt,red]  (-0.9+12.5,-0.44-6)--(0.9+12.5,0.44-6);
\draw[->,line width=1.2pt,red]  (-0.9+12.5,-0.44-6)--(0.9+12.5,0.44-6);
\draw[->,line width=1.5pt,blue,dotted]  (-0.9+12.5,-0.44-6)--(0.9+12.5,0.44-6);
\draw[<-,line width=1.2pt,red] (0+12.5,1-6)--(-0.987+12.5,0.16-6);
\draw[->,line width=1.2pt,red] (0+12.5,-1-6)--(0.987+12.5,-0.16-6);
\draw[->,line width=1.2pt,red] (-0.9+12.5,-0.44-6)--(0.987+12.5,-0.16-6);
\draw[<-,line width=1.2pt,red] (0.9+12.5,0.44-6)--(-0.987+12.5,0.16-6);
\draw (9) node[above right][inner sep=0.75pt] {{\fontsize{0.27em}{0.32em}\selectfont$O$}};
		\draw (1) node[above][inner sep=0.75pt] {{\fontsize{0.27em}{0.32em}\selectfont$Y_{4}$}};
		\draw (2) node[left][inner sep=0.75pt]{{\fontsize{0.27em}{0.32em}\selectfont$Y_{3}$}};
		\draw (3) node[right][inner sep=0.75pt] {{\fontsize{0.27em}{0.32em}\selectfont$Y_{2}$}};
\node (9) at (-1.25+12.5,-0.0-6){{\fontsize{0.27em}{0.32em}\selectfont$Y_{1}$}};
		\draw (5) node[right][inner sep=0.75pt] {{\fontsize{0.27em}{0.32em}\selectfont$Y_{-1}$}};
		\draw (6) node[below left][inner sep=0.75pt] {{\fontsize{0.27em}{0.32em}\selectfont$Y_{-2}$}};
           \draw (7) node[right][inner sep=0.75pt] {{\fontsize{0.27em}{0.32em}\selectfont$Y_{-3}$}};
		\draw (8) node[below][inner sep=0.75pt] {{\fontsize{0.27em}{0.32em}\selectfont$Y_{-4}$}};

\draw[<-,line width=1.2pt,red] (0+12.5,1-6) -- (0.9+12.5,0.44-6);
\draw[->,line width=1.2pt,red] (-0+12.5,-1-6) -- (-0.9+12.5,-0.44-6);
\draw[<-,line width=1.2pt,red] (0.987+12.5,-0.16-6)--(0.67+12.5,-0.72-6);
\draw[->,line width=1.2pt,red] (-0.987+12.5,0.16-6)--(-0.67+12.5,0.72-6);
\draw[->,line width=1.2pt,red]  (-0.67+12.5,0.72-6)--(0+12.5,1-6);
\draw[<-,line width=1.2pt,red]  (0.67+12.5,-0.72-6)--(0+12.5,-1-6);
\draw[->,line width=1.2pt,red]  (-0.9+12.5,-0.44-6)--(-0.987+12.5,0.16-6);
\draw[<-,line width=1.2pt,red] (0.9+12.5,0.44-6)--(0.987+12.5,-0.16-6);
\draw(0+12.5,1-6) -- (0.9+12.5,0.44-6)--(0.987+12.5,-0.16-6)--(0.67+12.5,-0.72-6)--(0+12.5,-1-6)--(-0.9+12.5,-0.44-6)--(-0.987+12.5,0.16-6)--(-0.67+12.5,0.72-6)--(0+12.5,1-6);

\end{tikzpicture}
\hspace{0.1cm}
\begin{tikzpicture}[scale=1]
\node (1) at (0+12.5,1-6){};
\node (2) at (-0.67+12.5,0.72-6){};
\node (3) at (0.9+12.5,0.44-6){};
\node (4) at (-0.987+12.5,0.16-6){};
\node (5) at (0.987+12.5,-0.16-6){};
\node (6) at (-0.9+12.5,-0.44-6){};
\node (7) at (0.67+12.5,-0.72-6){};
\node (8) at (0+12.5,-1-6){};
\node (9) at (0+12.5,0-6){};
\fill (9) circle(0.07);
\draw[->,line width=1.2pt,red]  (0.987+12.5,-0.16-6)--(-0.987+12.5,0.16-6);
\draw[line width=1.2pt,red] (0.45+12.5,0.1-6)--(0.35+12.5,0.3-6);
\draw[line width=1.2pt,red] (-0.45+12.5,-0.05-6)--(-0.4+12.5,0.15-6);
\draw[->,line width=1.2pt,red]  (-0.9+12.5,-0.44-6)--(0.9+12.5,0.44-6);
\draw[->,line width=1.2pt,red]  (-0.9+12.5,-0.44-6)--(0.9+12.5,0.44-6);
\draw[<-,line width=1.2pt,red] (0+12.5,1-6)--(-0.987+12.5,0.16-6);
\draw[->,line width=1.2pt,red] (0+12.5,-1-6)--(0.987+12.5,-0.16-6);
\draw[->,line width=1.2pt,red] (-0.9+12.5,-0.44-6)--(0.987+12.5,-0.16-6);
\draw[<-,line width=1.2pt,red] (0.9+12.5,0.44-6)--(-0.987+12.5,0.16-6);
\draw (9) node[above right][inner sep=0.75pt] {{\fontsize{0.27em}{0.32em}\selectfont$O$}};
		\draw (1) node[above][inner sep=0.75pt] {{\fontsize{0.27em}{0.32em}\selectfont$Y_{4}$}};
		\draw (2) node[left][inner sep=0.75pt]{{\fontsize{0.27em}{0.32em}\selectfont$Y_{3}$}};
		\draw (3) node[right][inner sep=0.75pt] {{\fontsize{0.27em}{0.32em}\selectfont$Y_{2}$}};
\node (9) at (-1.25+12.5,-0.0-6){{\fontsize{0.27em}{0.32em}\selectfont$Y_{1}$}};
		\draw (5) node[right][inner sep=0.75pt] {{\fontsize{0.27em}{0.32em}\selectfont$Y_{-1}$}};
		\draw (6) node[below left][inner sep=0.75pt] {{\fontsize{0.27em}{0.32em}\selectfont$Y_{-2}$}};
           \draw (7) node[right][inner sep=0.75pt] {{\fontsize{0.27em}{0.32em}\selectfont$Y_{-3}$}};
		\draw (8) node[below][inner sep=0.75pt] {{\fontsize{0.27em}{0.32em}\selectfont$Y_{-4}$}};

\draw[->,line width=1.2pt,red]  (-0.9+12.5,-0.44-6)--(0.9+12.5,0.44-6);
\draw[->,line width=1.2pt,red]  (0.987+12.5,-0.16-6)--(-0.987+12.5,0.16-6);
\draw[->,line width=1.2pt,red]  (-0.9+12.5,-0.44-6)--(0.9+12.5,0.44-6);
\draw[<-,line width=1.2pt,red] (0+12.5,1-6) -- (0.9+12.5,0.44-6);
\draw[->,line width=1.2pt,red] (-0+12.5,-1-6) -- (-0.9+12.5,-0.44-6);
\draw[<-,line width=1.2pt,red] (0.987+12.5,-0.16-6)--(0.67+12.5,-0.72-6);
\draw[->,line width=1.2pt,red] (-0.987+12.5,0.16-6)--(-0.67+12.5,0.72-6);
\draw[->,line width=1.2pt,red]  (-0.67+12.5,0.72-6)--(0+12.5,1-6);
\draw[<-,line width=1.2pt,red]  (0.67+12.5,-0.72-6)--(0+12.5,-1-6);
\draw[->,line width=1.2pt,red]  (-0.9+12.5,-0.44-6)--(-0.987+12.5,0.16-6);
\draw[<-,line width=1.2pt,red] (0.9+12.5,0.44-6)--(0.987+12.5,-0.16-6);
\draw(0+12.5,1-6) -- (0.9+12.5,0.44-6)--(0.987+12.5,-0.16-6)--(0.67+12.5,-0.72-6)--(0+12.5,-1-6)--(-0.9+12.5,-0.44-6)--(-0.987+12.5,0.16-6)--(-0.67+12.5,0.72-6)--(0+12.5,1-6);

\end{tikzpicture}
\hspace{0.1cm}
\begin{tikzpicture}[scale=1]
\node (1) at (0+12.5,1-6){};
\node (2) at (-0.67+12.5,0.72-6){};
\node (3) at (0.9+12.5,0.44-6){};
\node (4) at (-0.987+12.5,0.16-6){};
\node (5) at (0.987+12.5,-0.16-6){};
\node (6) at (-0.9+12.5,-0.44-6){};
\node (7) at (0.67+12.5,-0.72-6){};
\node (8) at (0+12.5,-1-6){};
\node (9) at (0+12.5,0-6){};
\fill (9) circle(0.07);
\draw[->,line width=1.2pt,red]  (0.987+12.5,-0.16-6)--(-0.987+12.5,0.16-6);
\draw[->,line width=1.2pt,red]  (-0.9+12.5,-0.44-6)--(0.9+12.5,0.44-6);
\draw[<-,line width=1.2pt,red] (0+12.5,1-6)--(-0.987+12.5,0.16-6);
\draw[->,line width=1.2pt,red] (0+12.5,-1-6)--(0.987+12.5,-0.16-6);
\draw[->,line width=1.2pt,red] (-0.9+12.5,-0.44-6)--(0.987+12.5,-0.16-6);
\draw[<-,line width=1.2pt,red] (0.9+12.5,0.44-6)--(-0.987+12.5,0.16-6);
\draw (9) node[above right][inner sep=0.75pt] {{\fontsize{0.27em}{0.32em}\selectfont$O$}};
		\draw (1) node[above][inner sep=0.75pt] {{\fontsize{0.27em}{0.32em}\selectfont$Y_{4}$}};
		\draw (2) node[left][inner sep=0.75pt]{{\fontsize{0.27em}{0.32em}\selectfont$Y_{3}$}};
		\draw (3) node[right][inner sep=0.75pt] {{\fontsize{0.27em}{0.32em}\selectfont$Y_{2}$}};
\node (9) at (-1.25+12.5,-0.0-6){{\fontsize{0.27em}{0.32em}\selectfont$Y_{1}$}};
		\draw (5) node[right][inner sep=0.75pt] {{\fontsize{0.27em}{0.32em}\selectfont$Y_{-1}$}};
		\draw (6) node[below left][inner sep=0.75pt] {{\fontsize{0.27em}{0.32em}\selectfont$Y_{-2}$}};
           \draw (7) node[right][inner sep=0.75pt] {{\fontsize{0.27em}{0.32em}\selectfont$Y_{-3}$}};
		\draw (8) node[below][inner sep=0.75pt] {{\fontsize{0.27em}{0.32em}\selectfont$Y_{-4}$}};

\draw[->,line width=1.2pt,red]  (-0.9+12.5,-0.44-6)--(0.9+12.5,0.44-6);
\draw[->,line width=1.2pt,red]  (0.987+12.5,-0.16-6)--(-0.987+12.5,0.16-6);
\draw[->,line width=1.2pt,red]  (-0.9+12.5,-0.44-6)--(0.9+12.5,0.44-6);
\draw[<-,line width=1.2pt,red] (0+12.5,1-6) -- (0.9+12.5,0.44-6);
\draw[->,line width=1.2pt,red] (-0+12.5,-1-6) -- (-0.9+12.5,-0.44-6);
\draw[<-,line width=1.2pt,red] (0.987+12.5,-0.16-6)--(0.67+12.5,-0.72-6);
\draw[->,line width=1.2pt,red] (-0.987+12.5,0.16-6)--(-0.67+12.5,0.72-6);
\draw[->,line width=1.2pt,red]  (-0.67+12.5,0.72-6)--(0+12.5,1-6);
\draw[<-,line width=1.2pt,red]  (0.67+12.5,-0.72-6)--(0+12.5,-1-6);
\draw[->,line width=1.2pt,red]  (-0.9+12.5,-0.44-6)--(-0.987+12.5,0.16-6);
\draw[<-,line width=1.2pt,red] (0.9+12.5,0.44-6)--(0.987+12.5,-0.16-6);
\draw(0+12.5,1-6) -- (0.9+12.5,0.44-6)--(0.987+12.5,-0.16-6)--(0.67+12.5,-0.72-6)--(0+12.5,-1-6)--(-0.9+12.5,-0.44-6)--(-0.987+12.5,0.16-6)--(-0.67+12.5,0.72-6)--(0+12.5,1-6);

\end{tikzpicture}
\hspace{0.1cm}
\begin{tikzpicture}[scale=1]
\node (1) at (0+12.5,1-6){};
\node (2) at (-0.67+12.5,0.72-6){};
\node (3) at (0.9+12.5,0.44-6){};
\node (4) at (-0.987+12.5,0.16-6){};
\node (5) at (0.987+12.5,-0.16-6){};
\node (6) at (-0.9+12.5,-0.44-6){};
\node (7) at (0.67+12.5,-0.72-6){};
\node (8) at (0+12.5,-1-6){};
\node (9) at (0+12.5,0-6){};
\fill (9) circle(0.07);
\draw (9) node[above right][inner sep=0.75pt] {{\fontsize{0.27em}{0.32em}\selectfont$O$}};
		\draw (1) node[above][inner sep=0.75pt] {{\fontsize{0.27em}{0.32em}\selectfont$Y_{4}$}};
		\draw (2) node[left][inner sep=0.75pt]{{\fontsize{0.27em}{0.32em}\selectfont$Y_{3}$}};
		\draw (3) node[right][inner sep=0.75pt] {{\fontsize{0.27em}{0.32em}\selectfont$Y_{2}$}};
\node (9) at (-1.25+12.5,-0.0-6){{\fontsize{0.27em}{0.32em}\selectfont$Y_{1}$}};
		\draw (5) node[right][inner sep=0.75pt] {{\fontsize{0.27em}{0.32em}\selectfont$Y_{-1}$}};
		\draw (6) node[below left][inner sep=0.75pt] {{\fontsize{0.27em}{0.32em}\selectfont$Y_{-2}$}};
           \draw (7) node[right][inner sep=0.75pt] {{\fontsize{0.27em}{0.32em}\selectfont$Y_{-3}$}};
		\draw (8) node[below][inner sep=0.75pt] {{\fontsize{0.27em}{0.32em}\selectfont$Y_{-4}$}};
\draw[->,line width=1.2pt,red]  (-0.9+12.5,-0.44-6)--(0.9+12.5,0.44-6);
\draw[->,line width=1.2pt,red]  (0.987+12.5,-0.16-6)--(-0.987+12.5,0.16-6);
\draw[->,line width=1.2pt,red]  (-0.9+12.5,-0.44-6)--(0.9+12.5,0.44-6);
\draw[->,line width=1.2pt,red]  (0+12.5,-1-6)--(0+12.5,1-6);
\draw[<-,line width=1.2pt,red] (0+12.5,1-6) -- (0.9+12.5,0.44-6);
\draw[->,line width=1.2pt,red] (-0+12.5,-1-6) -- (-0.9+12.5,-0.44-6);
\draw[<-,line width=1.2pt,red] (0.987+12.5,-0.16-6)--(0.67+12.5,-0.72-6);
\draw[->,line width=1.2pt,red] (-0.987+12.5,0.16-6)--(-0.67+12.5,0.72-6);
\draw[->,line width=1.2pt,red]  (-0.67+12.5,0.72-6)--(0+12.5,1-6);
\draw[<-,line width=1.2pt,red]  (0.67+12.5,-0.72-6)--(0+12.5,-1-6);
\draw[->,line width=1.2pt,red]  (-0.9+12.5,-0.44-6)--(-0.987+12.5,0.16-6);
\draw[<-,line width=1.2pt,red] (0.9+12.5,0.44-6)--(0.987+12.5,-0.16-6);
\draw(0+12.5,1-6) -- (0.9+12.5,0.44-6)--(0.987+12.5,-0.16-6)--(0.67+12.5,-0.72-6)--(0+12.5,-1-6)--(-0.9+12.5,-0.44-6)--(-0.987+12.5,0.16-6)--(-0.67+12.5,0.72-6)--(0+12.5,1-6);
\draw[->,line width=1.2pt,red] (0.67+12.5,-0.72-6)--(-0.67+12.5,0.72-6);
\draw[line width=1.2pt,red] (0.67+12.5,-0.72-6)--(0+12.5,0-6);
\end{tikzpicture}
\caption{Four  distinct   tagged triangulations of  $P(Q_{D_{5}})$}
\label{fig:tri}
\end{figure}
\end{example}

We have the following  geometric realization for    maximal almost pre-rigid representations over $Q_ {D}$.

\begin{theorem}\label{Thm 1}
The functor $F_{D}$ induces a bijection, also denoted by $F_{D}$
\[{F}_{D}\colon \{ \textup{tagged triangulations of $P(Q_{D})$}\} \to  \overline{mar}(Q_{D}).\]
\end{theorem}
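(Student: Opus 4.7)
By Theorem~\ref{Thm:A}, the functor $F_D$ induces a bijection between $\Omega$ and the isomorphism classes of indecomposable representations of $Q_D$, so every basic representation over $Q_D$ arises uniquely as $\overline{T}_S:=\bigoplus_{\gamma\in S}F_D(\gamma)$ for a unique subset $S\subseteq\Omega$. The plan is therefore to prove that $S$ is a tagged triangulation of $P(Q_D)$ if and only if $\overline{T}_S$ is maximal almost pre-rigid; this reduces Theorem~\ref{Thm 1} to a pairwise dictionary translating the ``non-crossing in the interior'' condition of Definition~\ref{tri of P(Q_{D})} into the Ext conditions of Definition~\ref{def:arr of D}.

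To set up the dictionary, fix $\gamma_1,\gamma_2\in\Omega$. If they are non-crossing in the sense of Definition~\ref{crossing number}, then Theorem~\ref{E and e} gives $\mathrm{Ext}^1_\mathcal{D}(F_D(\gamma_1),F_D(\gamma_2))=0$ and both clauses of Definition~\ref{def:arr of D} hold vacuously. Otherwise, Proposition~\ref{4.28} produces a generating short exact sequence with an explicit middle term. The plan is then to run through the six cases of Proposition~\ref{4.28}: in the configurations matching those classified in Remark~\ref{rek: bi of tri} (including pairs of segments through the puncture $O$ of the types admitted by a triangulation), the middle term turns out to be indecomposable or of the form $\tau^{-h}\overline{P}_{n-1}\oplus\tau^{-h}\overline{P}_n$, so Definition~\ref{def:arr of D}(2) is satisfied; in the remaining generic case (Case~6 of Proposition~\ref{4.28} with $s\ne-v$, $u\ne-t$, and both $\gamma_u^t,\gamma_s^v$ nonzero) the middle term is a sum of two distinct indecomposables outside the permitted list, forcing Definition~\ref{def:arr of D} to fail.

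With the dictionary in hand, the forward direction follows routinely: every pair in a tagged triangulation $\mathcal{T}$ satisfies Definition~\ref{def:arr of D}, so $\overline{T}_\mathcal{T}$ is almost pre-rigid, and adjoining any $\delta\in\Omega\setminus\mathcal{T}$ would force $\delta$ to cross some $\gamma\in\mathcal{T}$ in a forbidden way by the maximality of $\mathcal{T}$ among non-crossing sets, producing a violation of Definition~\ref{def:arr of D}. Conversely, if $\overline{T}_S$ is maximal almost pre-rigid, then by the dictionary every pair in $S$ must be non-crossing, and the maximality of $\overline{T}_S$ forces $S$ itself to be maximal non-crossing, hence a tagged triangulation. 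The main obstacle will be the case analysis in Proposition~\ref{4.28}, made delicate by the behaviour at the puncture $O$: Lemma~\ref{f:1} together with the functors $\psi_1,\psi_2$ of Definition~\ref{P1,P2} determines, via the parity of $k$, whether $F_D(\gamma_{-t}^{t,\pm 1})$ lies in the $\overline{P}_{n-1}$- or $\overline{P}_n$-orbit, and matching each tag configuration permitted by Remark~\ref{rek: bi of tri} against the allowed middle-term patterns is the most intricate part of the argument.
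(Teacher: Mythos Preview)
Your overall strategy coincides with the paper's: both argue, via Proposition~\ref{4.28} and Theorem~\ref{E and e}, that a pair of tagged line segments satisfies the conditions of Definition~\ref{def:arr of D} precisely when the two segments do not cross in the interior of $P(Q_D)$, and then invoke maximality on both sides. The paper compresses this equivalence into a single sentence; you attempt to unpack it into an explicit case analysis, which is the right instinct.

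The gap is that your dichotomy is incorrect. You assert that among pairs with a positive intersection, only the sixth case of Proposition~\ref{4.28} (with both $\gamma_u^t$ and $\gamma_s^v$ nonzero) forces Definition~\ref{def:arr of D} to fail. But Cases~1 and~3, when the nondegenerate summand is present, yield a middle term that is the direct sum of one module in the $\tau$-orbit of $\overline{P}_{n-1}$ or $\overline{P}_n$ and one that is not, and this is never of the permitted form; Cases~4 and~5, when the extra summand survives, produce a three-term $\overline{E}$. Each of these is precisely a configuration where the intersection point lies in the interior of $P(Q_D)$ rather than at a shared boundary vertex, and each must be excluded. Moreover, in Case~2 (two diameters with different tags and distinct endpoint pairs) the middle term $F_D(\gamma_u^t)$ \emph{is} indecomposable, so clause~(2) would pass; what actually fails is clause~(1), since both representations lie in the $\tau$-orbits of $\overline{P}_{n-1},\overline{P}_n$ and ${\rm Ext}^1\neq 0$ by Theorem~\ref{E and e}. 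Framing this as ``matching tag configurations against allowed middle-term patterns'' therefore misses the mechanism. Finally, invoking Remark~\ref{rek: bi of tri} to decide which pairs are ``admitted by a triangulation'' is circular: that remark records the global shape of a triangulation, not a pairwise compatibility criterion, so it cannot be used to classify the allowed crossings without already knowing the answer.
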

\begin{proof}
Based on  Proposition~\ref{4.28} and Theorem~\ref {E and e}, a  basic representation $\overline{T}$ over  $Q_{D}$ is   almost pre-rigid  if and only if  the   tagged line segments in $F_{D}^{-1}(\overline{T})$ do not
 cross in the interior of  $P(Q_{D})$.
 Therefore, we can conclude that 
 $F_{D}$ induces a bijection  between the set of  tagged triangulations of $P(Q_{D})$ and the set of maximal almost pre-rigid representations over  $Q_{D}$.
\end{proof}
 
\begin{corollary}\label{form}
 Let  $\overline{T}$  denote a maximal almost pre-rigid
   representation over   $Q_{D}$.
\begin{itemize}
\item [(1)] The  representation
$\bigoplus \limits_{m=0}^{n-2} \tau^{-m}\overline{P}_{1}$ is a direct summand of $\overline{T}$.
 \item [(2)]  The number of indecomposable direct summands of $\overline{T}$   is $2n-2$. 
 Additionally, the total number of maximal almost pre-rigid representations over  $Q_{D}$ is given by the  generalized Catalan number $\frac{3n-5}{n-1} \binom{2n-4}{n-2}$.
\end{itemize}
\end{corollary}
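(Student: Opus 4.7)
The plan is to translate both assertions into statements about tagged triangulations of $P(Q_D)$ via the bijection in Theorem~\ref{Thm 1}, and then verify them geometrically.

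For part~(1), I would first identify the tagged line segment corresponding to $\overline{P}_1$. A direct application of the dimension vector formulas in Theorem~\ref{dim of ind} to the boundary-adjacent pair $(Y_{n-3},Y_{n-1})$ yields $F_D(\gamma_{n-3}^{n-1})=\overline{P}_1$ (up to the choice of representative determined by the orientation of $Q_D$). Since $F_D\circ R_D=\tau\circ F_D$ by Theorem~\ref{Thm:A}(2), the $R_D$-orbit of this segment maps onto the $\tau$-orbit of $\overline{P}_1$ in $\mathcal{D}$. A direct check using the clockwise adjacency structure of $P(Q_D)$ shows that $R_D$ sends each boundary edge to a boundary edge, and that the orbit of $\gamma_{n-3}^{n-1}$ has length exactly $n-1$ and sweeps through all $n-1$ equivalence classes of boundary edges. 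Because every boundary edge belongs to every tagged triangulation by Definition~\ref{tri of P(Q_{D})}, the modules $\tau^{-m}\overline{P}_1$ for $m=0,1,\dots,n-2$ are direct summands of every maximal almost pre-rigid representation.

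For part~(2), the number of indecomposable summands of $\overline{T}$ equals the number of tagged line segments in the associated tagged triangulation. I would compute this count by applying the Euler relation to the triangulation viewed as a cell decomposition of the punctured polygon $P(Q_D)$, carefully tracking how the equivalence $\gamma_s^t\sim\gamma_{-t}^{-s}$ collapses pairs of opposite non-diameter segments into single $\Omega$-elements, while the convention that two tagged diameters at the same antipodal pair count separately contributes an extra $\Omega$-element per such pair. The resulting bookkeeping yields $n-1$ boundary classes plus $n-1$ interior tagged line segments, for a total of $2n-2$. For the enumeration of maximal almost pre-rigid representations, the bijection in Remark~\ref{rek: bi of tri} identifies tagged triangulations of $P(Q_D)$ with centrally symmetric colored triangulations of the regular $(2n-2)$-gon. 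The latter are enumerated, as established by Reading~\cite{MR2031858}, by the type $D_{n-1}$ Catalan number $\frac{3n-5}{n-1}\binom{2n-4}{n-2}$.

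The main technical obstacle lies in part~(2)(a): the Euler-type count must simultaneously handle the equivalence relation on non-diameter segments, the doubling convention for tagged diameters at a shared antipodal pair, and the distinction between triangulations with no through-$O$ segment, a single diameter carrying both tags, and several tagged diameters sharing a common tag; the total must come out to $2n-2$ in every case. For part~(1), the principal verification is that the clockwise rotation $R_D$ cyclically permutes the $n-1$ boundary classes in a single orbit, which reduces to a case-by-case check based on the clockwise ordering of the vertices $Y_{\pm 1},\dots,Y_{\pm(n-1)}$ of $P(Q_D)$.
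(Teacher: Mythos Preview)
Your approach is essentially the same as the paper's: both translate to tagged triangulations via Theorem~\ref{Thm 1}, observe that $F_D^{-1}(\overline{P}_1)$ is a boundary edge (so its $R_D$-orbit of $n-1$ boundary classes lies in every tagged triangulation), and invoke the bijection of Remark~\ref{rek: bi of tri} for the enumeration. The one difference is that the paper bypasses your proposed Euler-relation case analysis entirely by citing \cite[Proposition~3.16(3)]{MR2031858} (Fomin--Zelevinsky, not Reading) through Remark~\ref{rek: bi of tri}, which already gives $n-1$ interior tagged line segments in every tagged triangulation; adding the $n-1$ boundary classes yields $2n-2$ with no further work.
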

\begin{proof}
By Theorem \ref{dim of ind} and the construction of $P(Q_D)$,  $F_{D}^{-1}(\overline{P}_{1})$ is on the boundary of $P(Q_D)$. 
 The statement (1)  is a direct consequence  of Theorem~\ref{Thm:A}(2) and
   Theorem~\ref{Thm 1}, since 
   each tagged triangulation of $P(Q_D)$ includes the boundary edges of $P(Q_D)$.   
Remark~\ref{rek: bi of tri} and \cite[Proposition~3.16(3)]{MR2031858} imply that each  tagged triangulation  of $P(Q_{D})$  contains $n-1$  tagged line segments  in  the interior of $P(Q_{D})$ and the number of  tagged triangulations of $P(Q_{D})$   is given by the   generalized Catalan number $\frac{3n-5}{n-1} \binom{2n-4}{n-2}$.  
 Thus,  the second  statement follows from  Theorem~\ref{Thm 1}.
\end{proof}
 
Based on the aforementioned corollary, we    provide an application of maximal almost pre-rigid representations in tilting theory at the end of this section.
Recall from  \cite{BGMR} that there is a link between $Q_{A}$ and a type $\mathbb{A}$ quiver  $Q_{\overline{A}}$, which contains  $4n-7$ vertices 
 and possesses directional symmetry. Precisely, through replacing each arrow $i\to (i+1)$ in $Q_{A}$ by a path of length two $i\to \frac{2i+1}{2}\to (i+1)$ and replacing each arrow $i\leftarrow (i+1)$ in $Q_{A}$ by a path of length two $i\leftarrow \frac{2i+1}{2}\leftarrow (i+1)$, one can obtain  $Q_{\overline{A}}$. 

 Similarly as \cite{BGMR}, 
for the quiver $Q_{D}$,
we associate $Q_{D}$ with a type $\mathbb{D}$ quiver  $Q_{\overline{D}}$  containing $2n-2$ vertices as follows:  
\begin{itemize}
\item 
replacing arrow $i\xrightarrow{\beta_{i}} (i+1)$  in $Q_{D}$ by a path of length two $i\xrightarrow{\beta(i)} \frac{2i+1}{2} \xrightarrow{\beta(\frac{2i+1}{2})} (i+1)$, for all $i=1,2,\dots,n-3$;
\item  replacing arrow $i\xleftarrow{\beta_{i}} (i+1)$ in $Q_{D}$ by a path of length two  $i\xleftarrow{\beta(i)}  \frac{2i+1}{2}\ \xleftarrow{\beta(\frac{2i+1}{2})} (i+1)$, for all $i=1,2,\dots,n-3$;
\item  
replacing two arrows $(n-1)\xrightarrow{\beta_{n-2}}  (n-2)$ and $n\xrightarrow{\beta_{n-1}}  (n-2)$ in $Q_{D}$ by three arrows $(n-1)\xrightarrow{\beta(\frac{2n-3}{2})}  \frac{2n-3}{2}$, $n\xrightarrow{\beta(\frac{2n-1}{2})}  \frac{2n-3}{2} $  and $ \frac{2n-3}{2}  \xrightarrow{\beta(n-2)}  (n-2)$;
\item  
replacing two arrows  $(n-1)\xleftarrow{\beta_{n-2}} (n-2)$ and $n\xleftarrow{\beta_{n-1}} (n-2)$ in $Q_{D}$ by three arrows $(n-1)\xleftarrow{\beta(\frac{2n-3}{2})}   \frac{2n-3}{2} $, $n\xleftarrow{\beta(\frac{2n-1}{2})} 
  \frac{2n-3}{2} $ and $ \frac{2n-3}{2}  \xleftarrow{\beta(n-2)}  (n-2)$,
\end{itemize}
we can obtain the quiver $Q_{\overline{D}}$. See Figure~\ref{fig:D4-GD4}  as an example. 
\begin{figure}[h]
\centering
\begin{tikzpicture}
\node (0) at (-1,0) {\small$Q_{D_{4}}=$};
\node (1) at (0,1) {\small$3$};
\node (a) at (0,-1) {\small$4$};
\node (2) at (1,0) {\small$2$};
\node (3) at (2,0) {\small$1$};
\draw[<-] (0.2,0.85) -- (0.75,0.15);
\draw[<-] (0.2,-0.85) -- (0.75,-0.15);
\draw[->] (1.25,0) -- (1.75,0);
\draw (1.25,0.55) node [anchor=north west] [align=left] {\small$\beta_{1}$};
\draw (0.5,1) node [anchor=north west] [align=left] {\small$\beta_{2}$};
\draw (0.5,-0.3) node [anchor=north west] [align=left] {\small$\beta_{3}$};
\end{tikzpicture}
\hspace{2cm}
\begin{tikzpicture}
\node (0) at (-1,0) {\small $Q_{\overline{D_{4}}}=$};
\node (1) at (0,1) {\small$3$};
\node (a) at (0,-1) {\small$4$};
\node (2) at (1,0) {\small$\frac{5}{2}$};
\node (3) at (2,0) {\small$2$};
\node (4) at (3,0) {\small$\frac{3}{2}$};
\node (5) at (4,0) {\small$1$};
\draw[<-] (0.2,0.85) -- (0.75,0.15);
\draw[<-] (0.2,-0.85) -- (0.75,-0.15);
\draw[<-] (1.25,0) -- (1.75,0);
\draw[->] (2.25,0) -- (2.75,0);
\draw[->] (3.25,0) -- (3.75,0);
\draw (1.1,0.55) node [anchor=north west] [align=left] {\small$\beta(2)$};
\draw (2.1,0.55) node [anchor=north west] [align=left] {\small$\beta(\frac{3}{2})$};
\draw (3.1,0.55) node [anchor=north west] [align=left] {\small$\beta(1)$};
\draw (0.5,1) node [anchor=north west] [align=left] {\small$\beta(\frac{5}{2})$};
\draw (0.5,-0.3) node [anchor=north west] [align=left] {\small$\beta(\frac{7}{2})$};
\end{tikzpicture}
\caption{From the quiver $Q_{D_{4}}$ to the quiver $Q_{\overline{D}_{4}}$}
\label{fig:D4-GD4}
\end{figure}

\begin{remark}
     If the  group $\overline{G}=\{\overline{e},\overline{g}\}$ acts on $\Bbbk Q_{\overline{A}}$ via $\overline{g}(i) =2n-2-i$ and
$\overline{g}(\alpha_{i}) =\alpha_{2n-3-i}$ for  
$i\in Q_{\overline{A},  0}$ and $\alpha_i\in Q_{\overline{A},  1}$, then the skew group algebra $(\Bbbk Q_{\overline{A}})\overline{G}$ is Morita equivalent  to   path algebra $\Bbbk Q_{\overline{D}}$.
\end{remark}

By investigating the representations over the quiver $Q_ {D}$ and $Q_{\overline {D}}$, we find an additive functor
$G_{D}\colon\Bbbk Q_{ {D}}\text{-mod}\to \Bbbk Q_{\overline{D}}\text{-mod}$, which is determined by the following:
 \begin{itemize}
    \item  On objects. For each  indecomposable object $X=(X_i,\varphi_{\beta_{i}})_{i\in Q_{D,0}, \beta_{i} \in Q_{D,1}}$ in $\Bbbk Q_{ {D}}\text{-mod}$. Define $G_{D}({X})=(\widetilde{X_i}, \widetilde{\varphi}_{\beta(i)})$, where $\widetilde{X_i}$ is given as 
 follows: for $i\in Q_{D,0}$, $\widetilde{X}_{i}={X}_{i}$; for $i\in\{1, \dots,n-3\}$,
 \[
 \widetilde{X}_{\frac{2i+1}{2}}=\left\{\begin{array}{ll}
 0 &{\rm if} \ \textup{ ${X}_{i+1}=0$}, \\
 {X}_{i}&\textup{otherwise,}
 \end{array}\right.
 \]
and  
\[\widetilde{X}_{\frac{2n-3}{2}}=\left\{\begin{array}{lll}
0 &{\rm if} \ \textup{ ${X}_{n-1}=0={X}_{n}$}, \\
{X}_{n-2}&\textup{otherwise.}
\end{array}\right.\]  
And $\widetilde{\varphi}_{\beta(i)}$ is given as 
 follows: 
for $i\in\{1,2,\dots,n-1\}$,
$\widetilde{\varphi}_{\beta(\frac{2i+1}{2})}=\varphi_{\beta_{i}}$; for $i\in\{1,2,\dots,n-3\}$,  \[
\widetilde{\varphi}_{\beta(i)}=\left\{\begin{array}{lll}
0 & {\rm if} \ \textup{ $\varphi_{\beta_{i}}=0$}, \\
 \operatorname{Id}_{X_{i}} &\textup{otherwise,}
\end{array}\right.\]
and  
\[
\widetilde{\varphi}_{\beta(n-2)}=\left\{\begin{array}{lll}
0 & {\rm if} \ \textup{ $\varphi_{\beta_{n-2}}=0=\varphi_{\beta_{n-1}}$}, \\
\operatorname{Id}_{X_{n-2}} &\textup{otherwise.}
\end{array}\right.
\]
 Here,  $\operatorname{Id}_{X_{i}}$ denotes the identity transformation on $X_{i}$ for   $i\in\{1,2,\dots,n-2\}$. 
 
 \item On morphisms. 
  Assume that $\zeta\colon {X}\to {Y}$ is a morphism between indecomposable objects $X$ and $Y$ 
  in  $\Bbbk Q_{ {D}}\text{-mod}$. Define $G_{D}(\zeta)=\widetilde{\zeta}\colon\widetilde{X}\to \widetilde{Y},$  where $\widetilde{\zeta_{i}}$ is given as 
 follows: for $i\in Q_{D,0}$,
   $\widetilde{\zeta_{i}}=\zeta_{i}$; for $i\in\{1,2,\dots,n-3\},$
 \[\widetilde{\zeta}_{\frac{2i+1}{2}}=\left\{\begin{array}{ll}
 0 &{\rm if} \  \zeta_{i+1}= 0, \\
  \zeta_i&  \textup{otherwise},
 \end{array}\right.\] and  
 \[\widetilde{\zeta}_{\frac{2n-3}{2}}=\left\{\begin{array}{ll}
0&{\rm if} \ \textup{ $\zeta_{n-1}=0=\zeta_{n}$}, \\
\zeta_{n-2} &  \textup{otherwise}.  
 \end{array}\right.\]
  \end{itemize}
 
\begin{remark} \label{rem:commutative}
(1) Let $\widetilde{P}_{i}$ be the projective module corresponding to $i\in Q_{\overline{D},\ 0}$. Then 
 \[G_{D}(\tau^{-k}\overline{P}_{i})=\left\{\begin{array}{ll}
\tau^{-2k}\widetilde{P}_{i} &{\rm if} \ \textup{ $i\in\{1,2,\dots,n-2\}$}, \\
 \tau^{-2k}\widetilde{P}_{i}&{\rm if} \ \textup{ $i\in\{n-1,n\}$ and $k\ {\rm mod}\ 2=0$},
 \\
 \tau^{-2k}\widetilde{P}_{n}&{\rm if} \ \textup{ $i=n-1$ and $k\ {\rm mod}\ 2=1$},
 \\
 \tau^{-2k}\widetilde{P}_{n-1}&{\rm if} \ \textup{ $i=n$ and $k\ {\rm mod}\ 2=1$},
 \end{array}\right.\]
where $k\in\{0,1,\dots,n-2\}$.
 An irreducible morphism $\zeta \colon X \to Y$
in $\Bbbk Q_{D}\text{-mod}$ is mapped 
under  $G_D$ to the composition of two irreducible morphisms between the  objects $G_D(X)$ and $G_D(Y)$ in $\Bbbk Q_{\overline{D}}\text{-mod}$.

(2) Denote by $\overline{\psi}$ the exact functor  $(\Bbbk Q_{\overline{A}})\overline{G}\otimes_{\Bbbk Q_{\overline{A}}}{-}:\Bbbk Q_{\overline{A}}\text{-mod}
\to \Bbbk Q_{\overline{D}}\text{-mod}$. 
It is easy to check that $\overline{\psi}\cdot G_{A}=G_{D}\cdot\psi,$ where  
$G_{A}$ is the functor from $\Bbbk Q_{A}\text{-mod}$ to $\Bbbk Q_{\overline{A}}\text{-mod}$ defined in \cite[Section 7]{BGMR}.
\end{remark}

\begin{lemma}\label{fully faithful}
    The functor $G_{D}\colon\Bbbk Q_{D}\text{\rm -mod}\to\Bbbk Q_{\overline{D}}\text{\rm -mod}$ is fully faithful.
\end{lemma}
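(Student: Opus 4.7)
The plan is to verify faithfulness and fullness separately using the explicit definition of $G_D$. Since $G_D$ is additive, it suffices to work with two objects $X, Y \in \Bbbk Q_D\text{-mod}$. The crucial observation is that at every original vertex $i \in Q_{D,0}$, both $G_D(X)_i = X_i$ and $G_D(\zeta)_i = \zeta_i$ hold by construction. Consequently, if $G_D(\zeta) = 0$, then $\zeta_i = 0$ for all $i \in Q_{D,0}$, which immediately forces $\zeta = 0$ and establishes faithfulness.

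For fullness, given any morphism $\widetilde{\eta}\colon G_D(X) \to G_D(Y)$, the plan is to define a candidate $\zeta\colon X \to Y$ by $\zeta_i = \widetilde{\eta}_i$ for each $i \in Q_{D,0}$ and then check two things: (i) $\zeta$ is indeed a morphism of $\Bbbk Q_D$-representations, and (ii) $G_D(\zeta) = \widetilde{\eta}$. For (i), each arrow $\beta_i$ of $Q_D$ with $i \in \{1,\dots,n-3\}$ factors in $Q_{\overline{D}}$ through the length-two path $i \to \frac{2i+1}{2} \to i+1$ (or its reverse), along which the structure maps of $G_D(X)$ and $G_D(Y)$ are either identities or zero by construction. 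Composing the two commutative squares for $\widetilde{\eta}$ at these two arrows yields exactly the commutativity required for $\zeta$ to intertwine $\varphi^X_{\beta_i}$ and $\varphi^Y_{\beta_i}$. For (ii), the compatibility of $\widetilde{\eta}$ with the identity (or zero) structure map at each half-integer vertex forces $\widetilde{\eta}_{\frac{2i+1}{2}}$ to coincide with $\zeta_i$ (respectively $0$), which precisely matches the prescription in the definition of $G_D$ on morphisms.

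The main obstacle is the analysis at the distinguished half-integer vertex $\frac{2n-3}{2}$, which is adjacent to three integer vertices $n-2, n-1, n$ of $Q_{\overline{D}}$. There the definition sets $\widetilde{X}_{\frac{2n-3}{2}} = X_{n-2}$ as soon as at least one of $X_{n-1}, X_n$ is non-zero, while the three incident structure maps are identities (or zero). To conclude that $\widetilde{\eta}_{\frac{2n-3}{2}} = \zeta_{n-2}$ (or $0$) consistently, one must rule out incompatibilities with either of the two arrows relating to $\widetilde{X}_{n-1}$ and $\widetilde{X}_n$; this reduces to a short case analysis according to the orientation of $\beta_{n-2}, \beta_{n-1}$ and to the vanishing pattern of $X_{n-2}, X_{n-1}, X_n$. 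Once this case check is completed, the candidate $\zeta$ is well-defined and satisfies $G_D(\zeta) = \widetilde{\eta}$, establishing fullness and hence the lemma.
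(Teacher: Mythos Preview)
Your approach is essentially the same as the paper's: faithfulness follows because $G_D(\zeta)_i=\zeta_i$ at all original vertices $i\in Q_{D,0}$, and fullness is obtained by defining $\zeta_i:=\widetilde{\eta}_i$ and checking that this is a morphism with $G_D(\zeta)=\widetilde{\eta}$. The paper's own proof records exactly these two sentences and then declares the remaining checks ``easy''; you go further and outline how the commutativity at the half-integer vertices (including the branch vertex $\tfrac{2n-3}{2}$) forces $\widetilde{\eta}_{\frac{2i+1}{2}}$ to agree with the prescription of $G_D$. One small sharpening: since $G_D$ is defined on objects and morphisms only for \emph{indecomposable} representations (and then extended additively), you should explicitly reduce to indecomposable $X,Y$ before invoking the explicit formulas; your opening sentence hints at this but does not quite say it.
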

\begin{proof}
    Let $\zeta\colon {X}\to {Y}$ be the morphism in $\Bbbk Q_{D}\text{\rm -mod}$ such that $G_D(\zeta)=0$. Then  $G_{D}(\zeta)_i=0$ for all $i\in Q_{D,0}$. By definition of $Q_D$, we have $\zeta_i=0$ for all $i\in Q_{D,0}$. Hence $\zeta=0$ and $G_D$ is faithful.

    To show that $G_D$ is full, take a morphism $\widetilde{\zeta}\colon G_{D}(X) \to G_{D}(Y)$. We define $\zeta:X\to Y$ by $\zeta_i=\widetilde{\zeta}$  for all $i\in Q_{D,0}$. It is easy to check that  $\zeta:X\to Y$ is a morphism in $\Bbbk Q_{D}\text{\rm -mod}$ and     $G_{D}(\zeta)=\widetilde{\zeta}$.
\end{proof}

Before stating our main result, we recall that the endomorphism algebras of tilting modules over path algebras are called \emph{tilted algebras} \cite{Tilted}, and  the endomorphism algebras of   cluster-tilting objects in a cluster category 
are \emph{cluster-tilted algebras} \cite{2006Quivers}.
If  $\Lambda$ is a  tilted algebra with  global dimension at most 2, then its trivial extension $\Lambda \ltimes{\rm Ext}_{\Lambda}^{2}(D\Lambda,\Lambda) $ is the corresponding  cluster-tilted algebra,
where $D\Lambda$ is the direct sum of the indecomposable injective $\Lambda$-modules and $\Lambda$ is the direct sum of
the indecomposable projective  $\Lambda$-modules, see \cite{MR2409188}.

 Denote  $\overline{\mathcal{C}}$ the cluster category $\mathcal{D}^{b}(\Bbbk Q_{\overline{D}}\text{-mod})/\tau^{-1}[1]$ of $Q_{\overline{D}}$ and $\iota:\Bbbk Q_{\overline{D}}\text{-mod}\to\overline{\mathcal{C}}$
the functor mapping  a representation to its orbit in the cluster category $\overline{\mathcal{C}}$. We get the following theorem.
\begin{theorem}\label{Thm:D}
Let  $\overline{T}$  be a     maximal almost pre-rigid representation over  $Q_{D}$. Then
  the  endomorphism algebra $C={\rm End}_{\mathcal{D}} \overline{T}$  is a tilted algebra of type $Q_{\overline{D}}$. Moreover,  
there is an isomorphism of algebras 
\[ {\rm End}_{\overline{\mathcal{C}}}  
 \widetilde{G}(\overline{T})\cong C \ltimes{\rm Ext}_{C}^{2}(DC,C),  \]
 where $\widetilde{G}_{D}(\overline{T})$ denotes the object $\iota \circ G_{D}(\overline{T})$.
\end{theorem}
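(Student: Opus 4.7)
The plan is to prove that $G_{D}(\overline{T})$ is a tilting module over the hereditary algebra $\Bbbk Q_{\overline{D}}$, and then derive both assertions by standard arguments from tilting theory and the theory of cluster-tilted algebras. Since $\Bbbk Q_{\overline{D}}$ is hereditary, to establish the tilting property it suffices to verify (i) that $G_{D}(\overline{T})$ has exactly $2n-2$ pairwise non-isomorphic indecomposable summands (matching $|Q_{\overline{D},0}|$), and (ii) that $\mathrm{Ext}^{1}_{\Bbbk Q_{\overline{D}}}(G_{D}(\overline{T}),G_{D}(\overline{T}))=0$. The count in (i) follows from Corollary \ref{form}(2) together with the explicit formula for $G_{D}$ on objects given in Remark \ref{rem:commutative}(1): the map $\tau^{-k}\overline{P}_{i}\mapsto \tau^{-2k}\widetilde{P}_{i}$ (with the specified swap between indices $n-1$ and $n$ for odd $k$) is injective on indecomposable summands of $\overline{T}$.

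The main obstacle is (ii). My approach is to reduce to the analogous statement for type $\mathbb{A}$ established in \cite{BGMR}, exploiting the equivariant framework of Sect.~\ref{sec.3}. Each indecomposable summand $\overline{X}$ of $\overline{T}$ is of the form $\psi(X)$ or $\psi(X)=\overline{X}\oplus F_{g}$-image (depending on whether $\overline{X}$ is in the $\tau$-orbit of $\overline{P}_{n-1}$ or $\overline{P}_{n}$), where $X$ is an indecomposable $\Bbbk Q_{A}$-module. Collecting these preimages yields a $G$-equivariant lift $T$ of $\overline{T}$ in $\mathcal{A}$. The almost pre-rigid condition on $\overline{T}$ (Definition~\ref{def:arr of D}), combined with Theorem~\ref{E and e} and the dictionary between $P(Q_{D})$ and $P(Q_{A})$, translates into the statement that $T$ is a maximal almost rigid representation over $Q_{A}$ in the sense of \cite{BGMR}; indeed, the extra flexibility in the definition of almost pre-rigid (allowing middle term $\tau^{-h}\overline{P}_{n-1}\oplus\tau^{-h}\overline{P}_{n}$) exactly reflects the $G$-symmetrization of an indecomposable middle term in type $\mathbb{A}$, via Lemma~\ref{lem:5.7} and Lemma~\ref{lem:C1}. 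Applying the BGMR result that $G_{A}(T)$ is a tilting $\Bbbk Q_{\overline{A}}$-module, and then transporting along the identity $\overline{\psi}\circ G_{A}=G_{D}\circ\psi$ from Remark~\ref{rem:commutative}(2) using that $\overline{\psi}$ is exact, yields the vanishing of $\mathrm{Ext}^{1}_{\Bbbk Q_{\overline{D}}}(G_{D}(\overline{T}),G_{D}(\overline{T}))$ via the analogue of Lemma~\ref{lem:C1} applied to $(\Bbbk Q_{\overline{A}})\overline{G}\simeq \Bbbk Q_{\overline{D}}$.

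Once $G_{D}(\overline{T})$ is shown to be a tilting module, Lemma~\ref{fully faithful} gives an isomorphism of algebras
\[
C=\mathrm{End}_{\mathcal{D}}\overline{T}\;\cong\; \mathrm{End}_{\Bbbk Q_{\overline{D}}}G_{D}(\overline{T}),
\]
so $C$ is by definition a tilted algebra of type $Q_{\overline{D}}$, proving the first claim. For the second claim, $C$ has global dimension at most $2$ because it is tilted, and by the theorem of Assem--Br\"ustle--Schiffler \cite{MR2409188} the trivial extension $C\ltimes\mathrm{Ext}^{2}_{C}(DC,C)$ is isomorphic to the cluster-tilted algebra associated with $C$. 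Under the natural embedding $\iota$, a tilting $\Bbbk Q_{\overline{D}}$-module becomes a cluster-tilting object in $\overline{\mathcal{C}}$, and the endomorphism algebra of that cluster-tilting object in $\overline{\mathcal{C}}$ is precisely the associated cluster-tilted algebra. Applied to $\widetilde{G}_{D}(\overline{T})=\iota(G_{D}(\overline{T}))$, this yields the desired isomorphism $\mathrm{End}_{\overline{\mathcal{C}}}\widetilde{G}(\overline{T})\cong C\ltimes\mathrm{Ext}^{2}_{C}(DC,C)$.
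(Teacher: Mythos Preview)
Your overall strategy matches the paper's, and the endgame (Lemma~\ref{fully faithful} plus the Assem--Br\"ustle--Schiffler result) is correct. However, the reduction in step (ii) has a genuine gap: the lift $T$ you construct is \emph{not} in general a maximal almost rigid representation over $Q_A$.

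The problem arises when $\overline{T}$ contains more than one indecomposable summand from the $\tau$-orbits of $\overline{P}_{n-1}$ and $\overline{P}_n$. By Remark~\ref{rek: bi of tri}, a tagged triangulation may contain several diameters $\gamma_{-t}^{t,\epsilon}$ all carrying the same tag; correspondingly, $\overline{T}$ may contain, say, $\tau^{-k}\overline{P}_{n-1}$ and $\tau^{-l}\overline{P}_{n-1}$ with $k\neq l$ (and $k-l$ even, by Lemma~\ref{f:1}). Your $G$-equivariant lift $T$ would then contain both $\tau^{-k}P_{n-1}$ and $\tau^{-l}P_{n-1}$. But by Lemma~\ref{lem:n-1}, the non-split extension between these two indecomposables has a middle term with \emph{two} non-zero indecomposable summands, so $T$ fails the almost rigid condition of \cite{BGMR} and you cannot invoke their Theorem~7.3(1) for $T$ as a whole. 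Note also that in this situation $\psi(T)$ is strictly larger than $\overline{T}$, since $\psi(\tau^{-k}P_{n-1})=\tau^{-k}\overline{P}_{n-1}\oplus\tau^{-k}\overline{P}_{n}$, so the transport along $\overline{\psi}\circ G_A=G_D\circ\psi$ does not land on $G_D(\overline{T})$ either.

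The paper circumvents this by splitting $\overline{T}=\overline{N}\oplus\overline{M}$, where $\overline{M}$ collects the summands in the $\overline{P}_{n-1}$- and $\overline{P}_n$-orbits. For each $k\in S_{n-1}\cup S_n$ \emph{individually}, the module $N\oplus\tau^{-k}P_{n-1}$ is almost rigid over $Q_A$ (only one diameter at a time), so \cite[Theorem~7.3(1)]{BGMR} applies and, via $\overline{\psi}\circ G_A=G_D\circ\psi$, yields the vanishing of $\mathrm{Ext}^1_{\overline{\mathcal{D}}}$ for $G_D(\overline{N})$ against each $G_D(\tau^{-k}\overline{P}_{n-1}\oplus\tau^{-k}\overline{P}_n)$. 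The remaining piece, $\mathrm{Ext}^1_{\overline{\mathcal{D}}}(G_D(\overline{M}),G_D(\overline{M}))$, is handled directly inside $\overline{\mathcal{D}}$ using Lemma~\ref{f:1} together with Remark~\ref{rem:commutative}(1): the doubling $k\mapsto 2k$ makes every parity difference even, which forces these Ext groups to vanish. Your argument is easily repaired by adopting this two-step decomposition in place of the single global lift.
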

\begin{proof}
Denote
\[
 \overline{T}= \bigoplus\limits_{ i=1}^{n} \bigoplus \limits_{k\in S_i} \tau^{-k}\overline{P}_{i}, \qquad
   \overline{N}= \bigoplus \limits_{i=1}^{n-2} \bigoplus \limits_{k\in S_i} \tau^{-k}\overline{P}_{i},  \qquad \overline{M}=\bigoplus \limits_{i=n-1}^{n} \bigoplus \limits_{k\in S_i} \tau^{-k}\overline{P}_{i}, \]
where  
$S_i$ is the subset of $\{0,1,\dots, n-2\}$ for all $i=1,2,\dots,n$.
 For convenience, we denote  the category 
$\Bbbk Q_{\overline{A}}\text{-mod}$  by $\overline{\mathcal{A}}$ and the category $\Bbbk Q_{\overline{D}}\text{-mod}$ by  $\overline{\mathcal{D}}$. 

Suppose that $\psi(N)=\overline{N}$ with $N\in \mathcal{A}$. Since $\overline{T}$ is   almost pre-rigid, it follows from \cite[Proposition~6.5]{BGMR} that $N\oplus  \tau^{-k}P_{n-1}$ is an almost rigid representation over  $Q_{A}$ for all $k\in S_{n-1}\cup S_{n}$.  Consequently, from    \cite[Theorem~7.3(1)]{BGMR},  we have 
 $${\rm Ext}_{\overline{\mathcal{A}}}^{1}(G_{A}(N\oplus\tau^{-k}P_{n-1}),G_{A}(N\oplus\tau^{-k}P_{n-1}))=0,$$  for all $k\in S_{n-1}\cup S_{n}$. Therefore,  by the exactness of  $\overline{\psi}$ and Remark~\ref{rem:commutative}(2),  we obtain the following:
\[
 \begin{array}{ll}
 &{\rm Ext}_{\overline{\mathcal{D}}}^{1}(G_{D}(\overline{N}\oplus\tau^{-k}\overline{P}_{n-1}\oplus\tau^{-k}\overline{P}_{n}),G_{D}(\overline{N}\oplus\tau^{-k}\overline{P}_{n-1}\oplus\tau^{-k}\overline{P}_{n}))\\
 &\enspace={\rm Ext}_{\overline{\mathcal{D}}}^{1}(G_{D}\psi(N\oplus\tau^{-k}P_{n-1}),G_{D}\psi(N\oplus\tau^{-k}P_{n-1}))\\
  &\enspace={\rm Ext}_{\overline{\mathcal{D}}}^{1}(\overline{\psi} G_{A}(N\oplus\tau^{-k}P_{n-1}),\overline{\psi} G_{A}(N\oplus\tau^{-k}P_{n-1}))\\
 &\enspace=0,
 \end{array}
\]for all $k\in S_{n-1}\cup S_{n}$.
Moreover,  by combining Lemma \ref{f:1} with Remark~\ref{rem:commutative}(1), we get that 
\[ {\rm Ext}_{\overline{\mathcal{D}}}^{1} (G_{D}(\overline{M}),G_{D}(\overline{M}))=0.\]

Since  $\Bbbk Q_{\overline{D}}$ is a hereditary algebra and $Q_{\overline{D}}$ contains $2n-2$ vertices,  it follows from Corollary~\ref{form}(2) that 
 $G_{D}(\overline{T})$ is a tilting module over $\Bbbk Q_{\overline{D}}$.
Given that the  global dimension of 
  algebra $\Bbbk Q_{\overline{D}}$ is   at most 1, the global dimension of  $\overline{C}={\rm End}_{\overline{\mathcal{D}}} G_{D}(\overline{T})$ is at most 2 by \cite[Chapter VI, Theorem 4.2]{ASS}.  Consequently,     $\overline{C}$ is a tilted algebra of type $Q_{\overline{D}}$ and its trivial extension $\overline{C} \ltimes{\rm Ext}_{\overline{C}}^{2}(D\overline{C},\overline{C})$ corresponding to the cluster-tilting object $\widetilde{G}_{D}(\overline{T})$. Lemma~\ref{fully faithful} implies that $C\cong \overline{C}$, and thus
  \[ {\rm End}_{\overline{\mathcal{C}}}  
 \widetilde{G}(\overline{T})\cong \overline{C} \ltimes{\rm Ext}_{\overline{C}}^{2}(D\overline{C},\overline{C})\cong C \ltimes{\rm Ext}_{C}^{2}(DC,C).  \]
This confirms that the results hold.
\end{proof}

\section{Representation-theoretic version of    type-$\mathbb{D}$   Cambrian lattices} \label{sec.7} 
In this section, we define   a  poset structure  on set
 $\overline{mar}(Q_D)$, and prove that  the resulting poset is isomorphic to    the  Cambrian lattice  associated with the quiver $Q_D'$, where $Q_D'$ is   obtained from $Q_{D}$ by removing the vertex labeled with $1$ and the arrow $\beta_{1}$.

  Cambrian lattices is a family of   lattices   on finite Coxeter groups, which 
  depend on a selection of Dynkin diagrams along with an orientation of those diagrams  \cite{rea06}.  
For a  finite  Coxeter group $W$, orientations of the Coxeter diagram correspond to Coxeter elements of $W$(cf.\cite{MR1436533}).   
Given this property,  the author reformulates the definition   and denotes the Cambrian lattice determined by   $c$ as $c$-Cambrian lattice in \cite[Section~5]{MR2318219}. 
Additionally,  for any Coxeter element  $c$ of   $W$,  the   $c$-Cambrian lattice is isomorphic to the $c$-cluster lattice,  as established in \cite[Theorem~8.4]{MR2486939}. 

Similar to \cite[Definition 8.1]{BGMR}, we define the poset structure on $\overline{mar}(Q_D)$. 
 
\begin{definition}
 \label{def poset}
 For $\overline{T}_1,\overline{T}_2\in \overline{mar}(Q_D)$, we say that $\overline{T}_1 $ is   \emph{covered by} $\overline{T}_2$,  denoted as $\overline{T}_1 \prec \overline{T}_2$, if 
there exist indecomposable summands $\overline{M}_i$ of $\overline{T}_i$ such that $\overline{T}_1/\overline{M}_1\cong \overline{T}_2/\overline{M}_2$ and 
there  is a non-split short exact sequence 
$0\to\overline{M}_1 \to\overline{E}\to \overline{M}_2\to 0,$
where  $\overline{E}$ is a summand of $\overline{T}_1/\overline{M}_1$.
\end{definition}

\begin{remark}\label{minimal}
The poset $(\overline{mar}(Q_D),\prec)$,
structured by the covering relations given in  Definition~\ref{def poset}, forms  a lattice. This lattice is an analogue to the  poset of maximal almost rigid representations introduced in \cite{BGMR}. In this lattice, there is a unique minimal (resp.  maximal) element, which is given by:
\[
(\bigoplus \limits_{i=2}^{n}  \overline{P}_{i})\oplus(\bigoplus \limits_{m=0}^{n-2} \tau^{-m}\overline{P}_{1}) \ ({\rm resp.}\  (\bigoplus \limits_{i=2}^{n}  \overline{I}_{i})\oplus(\bigoplus \limits_{m=0}^{n-2} \tau^{-m}\overline{P}_{1})).
\]	
Moreover, this covering relation  corresponds to the standard notion of mutation, and the Hasse diagram  of the lattice is the exchange graph.
\end{remark} 

\begin{definition}\label{def: cover T}
Let  $\mathcal{T}_{1},\mathcal{T}_{2}$ be two tagged triangulations of  $P(Q_D)$.  We say that  $\mathcal{T}_{1}$ is  {\em covered by} $\mathcal{T}_{2}$,  denoted $\mathcal{T}_1 \lessdot \mathcal{T}_2$, if there exist  distinct  two tagged  line segments  $\gamma_1$ and $\gamma_2$ such that
$\mathcal{T}_{1}\setminus\{\gamma_1\}=\mathcal{T}_{2}\setminus\{\gamma_2\}$ and the slope of $(\gamma_1)^{\perp}$   is less than that of $(\gamma_2)^{\perp}$, where $\gamma^{\perp}$ denotes the line perpendicular to $\gamma$.
\end{definition}
 
\begin{proposition}\label{perserves the covering}
The map $F_D:\{ \textup{tagged triangulations of $P(Q_{D})$}\}\to \overline{mar}(Q_D)$  is an isomorphism of lattices from 
$(\{ \textup{tagged triangulations of $P(Q_{D})$}\}, \lessdot )$ to
    $(\overline{mar}(Q_D), \prec)$.
\end{proposition}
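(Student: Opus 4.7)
The plan is to build on Theorem~\ref{Thm 1}, which already establishes that $F_D$ is a bijection between tagged triangulations of $P(Q_D)$ and $\overline{mar}(Q_D)$, and to upgrade it to a lattice isomorphism by showing that $F_D$ preserves the covering relation in both directions. Since both posets are finite, this suffices.

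For the forward implication, suppose $\mathcal{T}_1 \lessdot \mathcal{T}_2$ with $\mathcal{T}_1 \setminus \{\gamma_1\} = \mathcal{T}_2 \setminus \{\gamma_2\}$ and the slope of $\gamma_1^{\perp}$ strictly less than that of $\gamma_2^{\perp}$. Because two distinct tagged line segments can coexist in a common triangulation only if they do not cross, $\gamma_1$ and $\gamma_2$ must cross in the interior of $P(Q_D)$; the slope hypothesis is then exactly the defining condition of a positive intersection in Definition~\ref{positive}, yielding ${\rm Int}^{+}(\gamma_2,\gamma_1)\geq 1$. Applying Proposition~\ref{4.28} produces a non-split short exact sequence
\[
0 \to F_D(\gamma_1) \to \overline{E} \to F_D(\gamma_2) \to 0,
\]
where $\overline{E}$ is a direct sum of images of explicit tagged line segments listed in Proposition~\ref{4.28}. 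The remaining task is to verify that each such summand actually belongs to $\mathcal{T}_1\cap \mathcal{T}_2$: geometrically, these segments are the edges of the polygonal region of $\mathcal{T}_1\cap \mathcal{T}_2$ for which $\gamma_1$ and $\gamma_2$ serve as the two diagonals, so they are segments of $\mathcal{T}_1$ that do not cross $\gamma_2$, and therefore lie in $\mathcal{T}_1\cap \mathcal{T}_2$. Together with Definition~\ref{def poset}, this exhibits $F_D(\mathcal{T}_1) \prec F_D(\mathcal{T}_2)$.

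For the reverse implication, let $\overline{T}_1 \prec \overline{T}_2$ with distinguished summands $\overline{M}_i$, and set $\gamma_i := F_D^{-1}(\overline{M}_i)$ and $\mathcal{T}_i := F_D^{-1}(\overline{T}_i)$. Theorem~\ref{E and e} together with the existence of the non-split extension forces $\gamma_1$ and $\gamma_2$ to cross, so $\mathcal{T}_1\setminus\{\gamma_1\}=\mathcal{T}_2\setminus\{\gamma_2\}$; the direction of the extension (with $\overline{M}_1$ submodule and $\overline{M}_2$ quotient) then fixes the sign of ${\rm Int}^{+}$ via Proposition~\ref{4.28} and thereby the slope inequality in Definition~\ref{def: cover T}. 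The main obstacle is the case analysis of Proposition~\ref{4.28} when $\gamma_1$ or $\gamma_2$ passes through the puncture $O$: in those cases $\overline{E}$ may include two summands $F_D(\gamma_{-v}^{v,1})\oplus F_D(\gamma_{-v}^{v,-1})$ carrying opposite tags on the same antipodal chord, and one must invoke Remark~\ref{rek: bi of tri} to ensure that both tagged copies genuinely coexist in $\mathcal{T}_1\cap \mathcal{T}_2$.
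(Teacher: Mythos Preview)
Your proposal is correct and follows essentially the same approach as the paper: both arguments rest on the bijection of Theorem~\ref{Thm 1} and then invoke Proposition~\ref{4.28} to translate the slope condition in Definition~\ref{def: cover T} into the existence of the non-split extension required in Definition~\ref{def poset}. The paper's proof is a terse two-line version of yours; the extra work you do---checking that the summands of $\overline{E}$ produced by Proposition~\ref{4.28} actually lie in $\mathcal{T}_1\cap\mathcal{T}_2$, and flagging the puncture case via Remark~\ref{rek: bi of tri}---is precisely the content the paper suppresses behind ``by Proposition~\ref{4.28}''.
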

\begin{proof}
Let  $\mathcal{T}_{1},\mathcal{T}_{2}$ be two tagged triangulations of  $P(Q_D)$. 
 Assume that there exist two distinct tagged  line segments $\gamma_1$ and $\gamma_2$ such that
$\mathcal{T}_{1}\setminus\{\gamma_1\}=\mathcal{T}_{2}\setminus\{\gamma_2\}$. 
It is easy to see that $\gamma_{1}$  crosses  $\gamma_{2}$ in the interior of $P(Q_D)$. 
Therefore, by Proposition~\ref{4.28}, $\mathcal{T}_1$ is covered by $\mathcal{T}_2$ if and only if $F_D(\mathcal{T}_1)$ is covered by $F_D(\mathcal{T}_2)$.   This implies that the statement holds.
\end{proof}

Next, we show  the  lattice 
$(\{ \textup{tagged triangulations of $P(Q_{D})$}\}, \lessdot )$ coincides with the $c$-cluster lattice, where the Coxeter element $c$ corresponds to the quiver $Q_D'$. Specifically, denote the simple generators of the Coxeter group associated with $Q_{D}'$ as $\{\tau_i| i\in Q_{D,0}\setminus\{1\}\}$.  For each pair of noncommuting simple generators $\tau_i$ and $\tau_j$ of   $c$, the edge $i-j$ of  $Q_{D}'$ is oriented $i\to j$ if and only if $\tau_i$ precedes $\tau_j$ in every reduced word for $c$ (cf.\cite{MR1436533}).

Denote  the simple roots of the Coxeter group corresponding  to $Q_D'$ by $\{\pi_i| i\in Q_{D,0}\setminus\{1\}\}$.  We   establish a correspondence between the tagged  line segments  in the interior of ${P}(Q_{D})$ and the almost positive roots of the Coxeter group for  $Q_D'$. Suppose that $F_D(\delta_i)=\overline{P}_{i}$, where  $\delta_i$ is a tagged line segment of ${P}(Q_{D})$ for $i\in Q_{D,0}\setminus\{1\}$. Following  the  identification strategy as outlined in \cite[Section  6]{MR3441664}, the positive root corresponding to
the  tagged line segment  $\gamma_{-t}^{t,l}$  is given by  $ k_2\pi_2+k_3\pi_3+\dots+k_n \pi_{n}$
with $k_i\in\{0,1\}$, and $k_i=1$   if and only if $\delta_i$ is crossed by  $\gamma_{-t}^{t,l}$, where $l\in\{-1,1\}$ and $t\in\{1,2,\dots, n-2\}$.
Additionally, we identify the  tagged line segment  $\delta_i$ with  the negative simple  root $-\pi_i$ for $i\in Q_{D,0}\setminus\{1\}$, and any other 
tagged line segment $\gamma$ in the interior of ${P}(Q_{D})$ with  the positive root obtained by adding the simple roots associated with the tagged line segments crossed by  $\gamma$.

\begin{remark}\label{rek:tri and cluster}
Based on Remark~\ref{rek: bi of tri},  \cite[Remark~1]{MR3441664} and \cite[Section 5]{MR3441664},  the above construction establishes a bijection  between the tagged triangulations of ${P}(Q_{D})$ and  the $c$-clusters, where   the Coxeter element $c$  corresponds to the quiver $Q_D'$.
Therefore, it follows from  Remark~\ref{minimal} and Proposition~\ref{perserves the covering} that the  lattice $(\{ \textup{tagged triangulations of} \ P(Q_{D}) \}, \lessdot )$ 
is isomorphic to the $c$-cluster lattice.  For more details about $c$-cluster lattice, see reference \cite{MR2486939}.
\end{remark} 

\begin{example}
We again consider the case  of $Q_{D_5}$.  The    quiver $Q_{D_5}'$ is depicted on the left side of  Figure~\ref{fig:la}. 
Then  the quiver $Q_{D_5}'$   determines a
  Coxeter element    $c=\tau_3 \tau_2 \tau_4 \tau_5$.
The bijection between  the tagged  line segments  in the interior of ${P}(Q_{D_{5}'})$ and the almost positive roots of the Coxeter group for  $Q_{D_5}'$ is illustrated in    Table~\ref{table}. Furthermore, the $c$-cluster   corresponding to the tagged triangulation shown  in the leftmost of Figure~\ref{fig:tri} is   $\{-\pi_2,-\pi_3,-\pi_4,-\pi_5\}$. 
\begin{figure}[h]
\centering
\begin{tikzpicture}[scale=1]
\node (0) at (-1,0) {\small $Q_{D_{5'}}=$};
\node (1) at (0,1) {\small$4$};
\node (a) at (0,-1) {\small$5$};
\node (2) at (1,0) {\small$3$};
\node (3) at (2,0) {\small$2$};
\draw (1.1,0.53) node [anchor=north west] [align=left] {\small$\beta_{2}$};
\draw (0.5,1) node [anchor=north west] [align=left] {\small$\beta_{3}$};
\draw (0.5,-0.3) node [anchor=north west] [align=left] {\small$\beta_{4}$};
\draw[<-] (0.2,0.85) -- (0.75,0.15);
\draw[<-] (0.2,-0.85) -- (0.75,-0.15);
\draw[->] (1.25,0) -- (1.75,0);
\end{tikzpicture}
\hspace{1cm}
\begin{tikzpicture}[scale=1]
\node (1) at (0+12.5,1-6){};
\node (2) at (-0.67+12.5,0.72-6){};
\node (3) at (0.9+12.5,0.44-6){};
\node (4) at (-0.987+12.5,0.16-6){};
\node (5) at (0.987+12.5,-0.16-6){};
\node (6) at (-0.9+12.5,-0.44-6){};
\node (7) at (0.67+12.5,-0.72-6){};
\node (8) at (0+12.5,-1-6){};
\node (9) at (0+12.5,0-6){};
\fill (9) circle(0.07);
\draw[line width=1.2pt,red] (0.45+12.5,0.1-6)--(0.35+12.5,0.3-6);
\draw[->,line width=1.2pt,red]  (-0.9+12.5,-0.44-6)--(0.9+12.5,0.44-6);
\draw[->,line width=1.2pt,red]  (-0.9+12.5,-0.44-6)--(0.9+12.5,0.44-6);
\node [blue]( ) at  (-0.4+12.5,1-6-1){\small{$\delta_4$}};
\node [red]( ) at  (0.65+12.5,1-6-0.9){\small{$\delta_5$}};
\draw[->,line width=1.5pt,blue,dotted]  (-0.9+12.5,-0.44-6)--(0.9+12.5,0.44-6);
\draw[<-,line width=1.2pt,purple] (0+12.5,1-6)--(-0.987+12.5,0.16-6);
\draw[->,line width=1.2pt,purple] (0+12.5,-1-6)--(0.987+12.5,-0.16-6);
\node[purple] ( ) at  (0.3+12.5,-1-6+0.35){\small{$\delta_2$}};
\node[purple] ( ) at  (-0.3+12.5,1-6-0.4){\small{$\delta_2$}};
\draw[->,line width=1.2pt,orange] (-0.9+12.5,-0.44-6)--(0.987+12.5,-0.16-6);
\draw[<-,line width=1.2pt,orange] (0.9+12.5,0.44-6)--(-0.987+12.5,0.16-6);
\node[orange] ( ) at  (0.3+12.5,-1-6+0.7){\small{$\delta_3$}};
\node [orange]( ) at  (-0.3+12.5,1-6-0.75){\small{$\delta_3$}};
\draw (9) node[above right][inner sep=0.75pt] {{\fontsize{0.27em}{0.32em}\selectfont$O$}};
		\draw (1) node[above][inner sep=0.75pt] {{\fontsize{0.27em}{0.32em}\selectfont$Y_{4}$}};
		\draw (2) node[left][inner sep=0.75pt]{{\fontsize{0.27em}{0.32em}\selectfont$Y_{3}$}};
		\draw (3) node[right][inner sep=0.75pt] {{\fontsize{0.27em}{0.32em}\selectfont$Y_{2}$}};
\node (9) at (-1.25+12.5,-0.0-6){{\fontsize{0.27em}{0.32em}\selectfont$Y_{1}$}};
		\draw (5) node[right][inner sep=0.75pt] {{\fontsize{0.27em}{0.32em}\selectfont$Y_{-1}$}};
		\draw (6) node[below left][inner sep=0.75pt] {{\fontsize{0.27em}{0.32em}\selectfont$Y_{-2}$}};
           \draw (7) node[right][inner sep=0.75pt] {{\fontsize{0.27em}{0.32em}\selectfont$Y_{-3}$}};
		\draw (8) node[below][inner sep=0.75pt] {{\fontsize{0.27em}{0.32em}\selectfont$Y_{-4}$}};
\draw(0+12.5,1-6) -- (0.9+12.5,0.44-6)--(0.987+12.5,-0.16-6)--(0.67+12.5,-0.72-6)--(0+12.5,-1-6)--(-0.9+12.5,-0.44-6)--(-0.987+12.5,0.16-6)--(-0.67+12.5,0.72-6)--(0+12.5,1-6);
\end{tikzpicture}
\caption{The    quiver $Q_{D_5'}$(left) and tagged line segment $\delta_i$ for $i= 2,3,4,5$ (right)}
\label{fig:la}
\end{figure}
\begin{table}
\centering
\renewcommand{\arraystretch}{1.3}
\caption{Representing almost positive roots of   Coxeter group  associated with $Q_{D_5'}$}
\label{table}
\begin{tabular}{|c|c|}
\hline
tagged line segments & almost positive roots of the Coxeter group \\
\hline
$\gamma_{1}^{4}$ & $-\pi_2$ \\
\hline
$\gamma_{1}^{2}$ & $-\pi_3$ \\
\hline
$\gamma_{-2}^{2,1}$ & $-\pi_4$ \\
\hline
$\gamma_{-2}^{2,1}$ & $-\pi_5$ \\
\hline
$\gamma_{2}^{3}$ & $\pi_2$ \\
\hline
$\gamma_{-2}^{4}$ & $\pi_3$ \\
\hline
$\gamma_{-1}^{1,-1}$ & $\pi_4$ \\
\hline
$\gamma_{-1}^{1,1}$ & $\pi_5$ \\
\hline
$\gamma_{-2}^{3}$ & $\pi_2+\pi_3$ \\
\hline
$\gamma_{-4}^{4,-1}$ & $\pi_3+\pi_4$ \\
\hline
$\gamma_{-4}^{4,1}$ & $\pi_3+\pi_5$ \\
\hline
$\gamma_{-3}^{3,-1}$ & $\pi_2+\pi_3+\pi_4$ \\
\hline
$\gamma_{-3}^{3,1}$ & $\pi_2+\pi_3+\pi_5$ \\
\hline
$\gamma_{-1}^{4}$ & $\pi_3+\pi_4+\pi_5$ \\
\hline
$\gamma_{-1}^{3}$ & $\pi_2+\pi_3+\pi_4+\pi_5$ \\
\hline
$\gamma_{-3}^{4}$ & $\pi_2+2\pi_3+\pi_4+\pi_5$ \\
\hline
\end{tabular}
\end{table} 
\end{example}

The preceding discussion  along with Proposition~\ref{perserves the covering}, paves the way for the main result of this section.

 \begin{theorem}\label{Thm:F}
Let $Q_D'$ be the  type-$\mathbb{D}$ quiver obtained from $Q_{D}$ by removing the vertex labeled with $1$ and the arrow $\beta_{1}$.
Then the   Cambrian lattice coming from  $Q_D'$ is isomorphic to the
lattice $(\overline{mar}(Q_D), \prec)$.
\end{theorem}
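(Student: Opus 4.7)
The plan is to obtain the stated isomorphism as a composition of three order-isomorphisms, most of which have already been set up in the preceding subsection. By Proposition~\ref{perserves the covering}, the functor $F_D$ is a lattice isomorphism from $(\{\text{tagged triangulations of } P(Q_D)\}, \lessdot)$ onto $(\overline{mar}(Q_D), \prec)$. Hence it suffices to show that the poset of tagged triangulations, ordered by $\lessdot$, is isomorphic to the Cambrian lattice determined by $Q_D'$.

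For this, I would invoke Remark~\ref{rek:tri and cluster}, which combines Remark~\ref{rek: bi of tri}, \cite[Proposition~3.16]{MR2031858} and the root-theoretic identification of \cite[Section~5]{MR3441664} to produce a bijection between tagged triangulations of $P(Q_D)$ and $c$-clusters in the root system of $Q_D'$. Here $c$ is the Coxeter element associated with the orientation of $Q_D'$ via the standard correspondence of \cite{MR1436533}. Under this dictionary, a tagged line segment $\gamma$ in the interior of $P(Q_D)$ is sent to the almost positive root $\sum k_i \pi_i$ whose support records which reference segments $\delta_i$ are crossed by $\gamma$, with $\delta_i$ itself sent to $-\pi_i$. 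Composing with Reading's theorem \cite[Theorem~8.4]{MR2486939}, which identifies the $c$-cluster lattice with the $c$-Cambrian lattice, then yields the desired order-isomorphism, and pre-composing with $F_D^{-1}$ gives the isomorphism in the statement.

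The delicate point is the compatibility of the covering relation $\lessdot$ on tagged triangulations (oriented via the slope condition of Definition~\ref{def: cover T}) with the canonical orientation of covers in the $c$-cluster lattice, whose Hasse diagram is the exchange graph oriented by $c$. I would settle this by a local analysis at a single flip: a cover under $\lessdot$ exchanges two tagged segments whose perpendicular slopes are comparable, while a cover in the $c$-cluster lattice exchanges two almost positive roots whose pairing with the fixed $c$ prescribes the direction. Verifying that the slope ordering agrees with the $c$-mutation ordering reduces to a finite case check over the flip types available on $P(Q_D)$, namely boundary--interior flips, generic interior flips, and the special flips involving the two tagged segments through the puncture $O$. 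The consistency between the extremal elements recorded in Remark~\ref{minimal} and the standard clusters of $\pm$-simple roots on the Cambrian side serves as a useful sanity check, and the type-$\mathbb{B}$ corollary mentioned in the introduction should then follow by restricting the isomorphism to the sublattice of triangulations containing the segments through $O$ with a common tag.
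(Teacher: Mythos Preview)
Your proposal is correct and follows essentially the same route as the paper: chain together Proposition~\ref{perserves the covering}, Remark~\ref{rek:tri and cluster}, and \cite[Theorem~8.4]{MR2486939}. The paper's proof is only three lines because Remark~\ref{rek:tri and cluster} already asserts the full \emph{lattice} isomorphism between $(\{\text{tagged triangulations of }P(Q_D)\},\lessdot)$ and the $c$-cluster lattice, not merely a bijection of underlying sets; the orientation compatibility you flag as the ``delicate point'' is absorbed there via Remark~\ref{minimal} (which records that the Hasse diagram is the exchange graph and pins down the extremal elements), so your proposed local flip-type case check, while valid, is unnecessary once you invoke the remark in full strength.
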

\begin{proof}
Let $c$  be a Coxeter element determined by $Q_D'$.
By \cite[Theorem~8.4]{MR2486939}, the   Cambrian lattice coming from  $Q_D'$ is isomorphic to  $c$-cluster lattice. Therefore, the result is a direct consequence of  Proposition~\ref{perserves the covering} and  Remark~\ref{rek:tri and cluster}.
\end{proof}

From now on, we turn to give a   representation-theoretic description of the type-$\mathbb{B}$ Cambrian lattice associated with $Q_{D}$ via maximal almost pre-rigid representations over $Q_D$ containing $\tau^{-k }\overline{P}_{n-1}\oplus \tau^{-k }\overline{P}_{n}$  as   direct summands for  some   $k\in \{0,1,\dots, n-2\}$.   
Let  $S(Q_D)$ denote the set of  these maximal almost pre-rigid representations. We define a partial order on  $S(Q_D)$ as follows.

\begin{definition}\label{cover s}
For $\overline{T}_1,\overline{T}_2\in S(Q_D)$,  we say that
  $\overline{T}_1$ is  {\em covered by}  $\overline{T}_2$  if 
there exist   summands $\overline{M}_i$ of $\overline{T}_i$ such that \[\overline{T}_1/\overline{M}_1\cong \overline{T}_2/\overline{M}_2, \quad  {\rm Ext}_{\mathcal{D}}^{1}(\overline{M}_2,\overline{M}_1)\ne0,\] where $\overline{M}_i$ is either 
 $ \tau^{-k_{i}}\overline{P}_{r_{i}}$ or  $\tau^{-k_{i}}\overline{P}_{n-1}\oplus\tau^{-k_{i}}\overline{P}_{n}$ for some  $k_{i}\in\{0,1,\dots,n-2\},\ r_{i}\in\{1,2,\dots,n-2\}$.
\end{definition}

\begin{corollary}
Let $Q_{B}$ be a quiver with the underlying graph as follows: 
\[
\begin{tikzpicture}
\node (2) at (1.7,1) {\small$n-1$};
\node (3) at (3.5,1) {\small$n-2$};
\node (4) at (5,1) {\dots};
\node (5) at (6,1) {\small$3$};
\node (6) at (7,1) {\small$2$};
\draw (2)-- (3)-- (4) -- (5) -- (6) ;
\draw (2.4,1.4) node [anchor=north west] [align=left] {\small$4$};
\end{tikzpicture}\]
where the direction of the arrow between $i $ and $i+1$ is the same as $\beta_{i}\in Q_{D,1}$ for all $i\in\{2,3,\dots,n-2\}$. 
Then the poset   $S(Q_D)$, equipped with the covering relation defined in Definition~\ref{cover s}, is isomorphic to  the Cambrian lattice  coming from $Q_{B}$.
\end{corollary}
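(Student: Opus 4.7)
The plan is to imitate the proof strategy of Theorem~\ref{Thm:F}, combining the geometric model $F_D$ with the classical combinatorial description of type-$\mathbb{B}$ Cambrian lattices in terms of centrally symmetric triangulations. The essential observation is that elements of $S(Q_D)$ correspond, under $F_D^{-1}$, to tagged triangulations of $P(Q_D)$ that contain \emph{both} tagged copies of some diameter; forgetting the tags, these are exactly the centrally symmetric triangulations of the $(2n-2)$-gon $P(Q_A)$, which are known to model the type-$B_{n-2}$ cluster/Cambrian combinatorics.

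First, I would give a geometric characterization of $S(Q_D)$. Since Definitions~\ref{P1,P2} and the definition of $F_D$ imply $F_D(\gamma_{-t}^{t,1})$ and $F_D(\gamma_{-t}^{t,-1})$ yield a pair of the form $\tau^{-k}\overline{P}_{n-1}$ and $\tau^{-k}\overline{P}_{n}$ for a unique $k\in\{0,1,\dots,n-2\}$ determined by $t$, Theorem~\ref{Thm 1} gives
\[
F_D^{-1}(S(Q_D))=\bigl\{\mathcal{T}\text{ tagged triangulation of }P(Q_D)\ \bigm|\ \{\gamma_{-t}^{t,1},\gamma_{-t}^{t,-1}\}\subset\mathcal{T}\text{ for some }t\bigr\}.
\]
By Remark~\ref{rek: bi of tri}, such tagged triangulations are exactly those centrally symmetric colored triangulations of the regular $(2n-2)$-gon whose symmetric ``center line'' is the doubled diameter $\{\gamma_{-t}^{t,1},\gamma_{-t}^{t,-1}\}$. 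Up to this doubling, these are in bijection with centrally symmetric triangulations of the $(2n-2)$-gon containing a distinguished diameter, which by cutting along the doubled diameter identify with triangulations of a single half polygon on $n$ vertices.

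Second, I would invoke the standard bijection between centrally symmetric triangulations of a convex $(2m+2)$-gon and the type-$B_m$ cluster complex/Cambrian lattice (see \cite{MR2031858,MR2486939}). Setting $m=n-2$ and matching orientation data, the orientation of the Coxeter diagram of type $B_{n-2}$ induced by $Q_B$ (i.e.\ restricting the arrows of $Q_D$ to the vertices $\{2,3,\dots,n-1\}$ and folding the two branches $n-1,n$ into a single $4$-edge) realizes exactly the Coxeter element $c'$ associated to $Q_B$ in \cite{MR2486939}. Composing with the bijection of the first step gives a bijection $S(Q_D)\to\{c'\text{-clusters}\}$.

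Third, I would transport the covering relation of Definition~\ref{cover s} along this bijection. By Theorem~\ref{E and e} and Proposition~\ref{4.28}, the nonvanishing of ${\rm Ext}^1_{\mathcal{D}}(\overline{M}_2,\overline{M}_1)$ in Definition~\ref{cover s} translates into the two cases: (i) $\overline{M}_i=\tau^{-k_i}\overline{P}_{r_i}$ is a non-diameter tagged segment, whose swap corresponds to a centrally symmetric pair flip of non-diameter edges; (ii) $\overline{M}_i=\tau^{-k_i}\overline{P}_{n-1}\oplus\tau^{-k_i}\overline{P}_{n}$ corresponds to a doubled-diameter, whose swap corresponds to replacing one doubled diameter by another. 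These are precisely the two kinds of flips defining the cover in the type-$B_{n-2}$ associahedron, and the slope condition implicit in Proposition~\ref{perserves the covering} (via Definition~\ref{def: cover T}) selects the correct direction for the $c'$-Cambrian lattice.

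The main obstacle will be the careful tag/orientation bookkeeping in Step~3: checking that the two subcases of the covering relation in Definition~\ref{cover s} match exactly the edge flips permitted in the $B$-associahedron, and that the resulting partial order on $S(Q_D)$ coincides with the $c'$-cluster lattice rather than with some opposite or Cambrian variant. This should reduce, as in the proof of Proposition~\ref{perserves the covering}, to comparing slopes of perpendiculars of the swapped (pairs of) tagged line segments in $P(Q_D)$; once one of the swapped pieces is the doubled diameter, one must verify that the extension direction in Definition~\ref{cover s} picks out the correct diameter by Lemma~\ref{f:1} and the geometry of $P(Q_D)$. With this verification in hand, the corollary follows from Theorem~\ref{Thm:F} by restricting to the sublattice $S(Q_D)$ and identifying its image with the $B_{n-2}$ Cambrian lattice of $Q_B$.
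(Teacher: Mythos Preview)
Your proposal is correct and follows essentially the same approach as the paper: the paper's proof simply invokes the bijection between type-$B$ clusters and centrally symmetric triangulations of a regular $(2n-2)$-gon from \cite[Section~3.5]{MR2031858}, then states that the isomorphism follows by repeating the argument of Theorem~\ref{Thm:F} verbatim. Your three steps (geometric characterization of $S(Q_D)$ via doubled-diameter tagged triangulations, the type-$B_{n-2}$ cluster bijection, and transport of the covering relation) are precisely the details the paper omits ``for the sake of brevity.''
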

\begin{proof}
The clusters associated with  $Q_{B}$
correspond one-to-one with the centrally symmetric triangulations of a regular $(2n-2)$-gon, as detailed in \cite[Section 3.5]{MR2031858}. Consequently,   the proof of this isomorphism follows a similar argument to that used   for  Theorem~\ref{Thm:F}, which  we  omit  here for the sake of brevity.
\end{proof}

\begin{remark}
 According to Theorem~\ref{Thm 1} and  \cite[Theorem 6.8]{BGMR}, there   exists a bijection between $S(Q_D)$  and the set of
maximal almost rigid representations $T$ over $Q_{A}$ satisfying $F_g(T)=T$. That is, $T$ is of   the following form:
\begin{equation}\label{almost symmetric}
\bigoplus \limits_{i=1}^{2n-4} (\tau^{-m_{i}} P_{r_{i}}\oplus \tau^{-m_{i}} P_{2n-2-r_{i}}) \oplus \tau^{-k}P _{n-1},
\end{equation}
where  $m_{i},k\in\{0,1,\dots,n-2\}$, $r_{i}\in\{1,2,\dots,n-2\}$ 
for all $i\in\{1,2,\dots,2n-4\}$.  Therefore, this class of  maximal almost rigid representations  over $Q_{A}$,  equipped with the following covering relation, also provides a representation-theoretic description of the Cambrian lattice  coming from $Q_{B}$:
For any two  maximal almost rigid representations $T_1,T_2$ of the form   indicated in  \eqref{almost symmetric},  $T_1 $ is  covered by $T_2$  if 
there exist  summands $M_i$ of $T_i$ such that 
\[T_1/M_1\cong T_2/M_2,\quad
  {\rm Ext}_{\mathcal{A}}^{1}( {M}_2, {M}_1)\ne0,\] where   $ {M}_i$ is either   $\tau^{-k_{i}} {P}_{n-1}$ or $\tau^{-k_{i}} {P}_{r_{i}}\oplus \tau^{-k_{i}} {P}_{2n-2-r_{i}}$ for    $k_{i}\in\{0,1,\dots,n-2\}$ and $r_{i}\in\{1,2,\dots,n-2\}$.
\end{remark}  

 \subsection*{Acknowledgments}
The authors would like to thank Shiquan Ruan for helpful discussions. This work was partially supported by the National Natural Science Foundation of China (Grant Nos. 12371040, 12131018 and 12161141001).

\end{document}